\newtheorem{thm}{Theorem}[section]
\newtheorem{prop}[thm]{Proposition}
\newtheorem{lem}[thm]{Lemma}
\newtheorem{cor}[thm]{Corollary}
\newtheorem{conjecture}[thm]{Conjecture}
\newtheorem{qn}[thm]{Question}
\newtheorem{pn}[thm]{Problem}
\numberwithin{equation}{section}
\theoremstyle{definition}
\newtheorem{definition}[thm]{Definition}
\newtheorem{remark}[thm]{Remark}
\newtheorem{ex}[thm]{Example}
\DeclareMathOperator{\im}{im} 
\DeclareMathOperator{\spec}{Spec} 
\DeclareMathOperator{\proj}{Proj} 
\newcommand{\sproj}{\proj} 
\newcommand{\supp}{\operatorname{Supp}}
\newcommand{\iso}{\cong}
\newcommand{\niso}{\ncong}
\newcommand{\farg}{-} 
\newcommand{\id}{\mathrm{id}}
\newcommand{\dual}{^{\vee}} 
\newcommand{\comp}{\circ} 
\newcommand{\mor}[1]{\xrightarrow{#1}}
\newcommand{\lto}{\longrightarrow}
\newcommand{\mono}{\hookrightarrow} 
\newcommand{\isomor}{\mor{\sim}} 
\newcommand{\rest}[1]{|_{#1}} 
\newcommand{\K}{\Bbbk} 
\newcommand{\cat}[1]{{\mathbf{#1}}} 
\newcommand{\opp}{^{\circ}} 
\newcommand{\s}[1]{\mathcal{#1}} 
\newcommand{\so}{\s{O}} 
\newcommand{\sod}{\s{O}_{\Delta}} 
\newcommand{\Hom}{\mathrm{Hom}}
\newcommand{\Ext}{\mathrm{Ext}}
\newcommand{\Aut}{\mathrm{Aut}}
\newcommand{\Pic}{\mathrm{Pic}} 
\newcommand{\cone}[1]{\mathsf{Cone}(#1)} 
\newcommand{\rort}[1]{#1^{\perp}} 
\newcommand{\lort}[1]{{}^{\perp}#1} 
\newcommand{\sh}[2][1]{#2[#1]} 
\newcommand{\dg}{\mathrm{dg}}
\newcommand{\FM}[2][]{\Phi^{#1}_{#2}} 
\newcommand{\aFM}[2][]{\Psi^{#1}_{#2}} 
\newcommand{\FMH}[1]{\FM[H]{#1}}
\newcommand{\FMK}[1]{\FM[K]{#1}}
\newcommand{\FMdg}[1]{\FM[\dg]{#1}} 
\newcommand{\FMS}[1]{\FM[\mathrm{s}]{#1}}
\newcommand{\D}[1][]{\mathrm{D}^{#1}} 
\newcommand{\Db}{\D[b]} 
\newcommand{\Dp}[1][]{\cat{Perf}_{#1}} 
\newcommand{\Dg}{\D[\dg]} 
\newcommand{\Perf}{\mathrm{Perf}^{\,\dg}} 
\newcommand{\R}{\mathbf{R}} 
\newcommand{\Lotimes}{\overset{\mathbf{L}}{\otimes}} 
\newcommand{\ds}{\omega} 
\newcommand{\fun}[1]{\mathsf{#1}} 
\newcommand{\dgfun}[1]{\fun{#1}^{\dg}} 
\newcommand{\gfMod}[1]{\cat{gmod}\text{-}#1} 
\newcommand{\dgMod}[1]{\cal{M}od\text{-}#1} 
\newcommand{\Coh}{\cat{Coh}}
\newcommand{\Qcoh}{\cat{Qcoh}}
\newcommand{\p}{\mathrm{p}} 
\newcommand{\ort}[1]{\langle#1\rangle} 
\newcommand{\ExFun}{\cat{ExFun}} 
\newcommand{\HH}{\mathrm{H\!H}}
\newcommand{\HO}{\mathrm{H}\Omega}
\newcommand{\HT}{\mathrm{HT}}
\newcommand{\kercat}[2]{\cat{K}(#1,#2)} 
\newcommand{\Cc}{(\ast)}
\newcommand{\ob}{\mathrm{Ob}}
\newcommand{\colim}{\underset{\longrightarrow n}{\mathrm{colim}}\,}
\newcommand{\cal}{\mathcal}
\newcommand{\ka}{{\cal A}}
\newcommand{\kb}{{\cal B}}
\newcommand{\kc}{{\cal C}}
\newcommand{\ke}{{\cal E}}
\newcommand{\kf}{{\cal F}}
\newcommand{\kg}{{\cal G}}
\newcommand{\ki}{{\cal I}}
\newcommand{\ko}{{\cal O}}
\newcommand{\kp}{{\cal P}}
\newcommand{\ks}{{\cal S}}
\newcommand{\kt}{{\cal T}}
\newcommand{\ZZ}{\mathbb{Z}}
\newcommand{\QQ}{\mathbb{Q}}
\newcommand{\RR}{\mathbb{R}}
\newcommand{\CC}{\mathbb{C}}
\newcommand{\PP}{\mathbb{P}}
\newcommand{\sHom}{\cal{H}om}
\begin{document}

	\title[Fourier--Mukai functors: a survey]{Fourier--Mukai functors: a survey}

	\author{Alberto Canonaco and Paolo Stellari}

	\address{A.C.: Dipartimento di Matematica ``F. Casorati'', Universit{\`a}
	degli Studi di Pavia, Via Ferrata 1, 27100 Pavia, Italy}
	\email{alberto.canonaco@unipv.it}

	\address{P.S.: Dipartimento di Matematica ``F.
	Enriques'', Universit{\`a} degli Studi di Milano, Via Cesare Saldini
	50, 20133 Milano, Italy}
	\email{paolo.stellari@unimi.it}
    \urladdr{http://users.mat.unimi.it/users/stellari/}
	
	\thanks{P.S.\ was partially supported by the MIUR of the Italian Government in the framework of the National Research Project ``Geometria algebrica e aritmetica, teorie coomologiche e teoria dei motivi'' (PRIN 2008).}

	\keywords{Derived categories, Fourier--Mukai functors}

	\subjclass[2010]{14F05, 18E10, 18E30}
	
		\begin{abstract}
			This paper surveys some recent results about Fourier--Mukai functors. In particular, given an exact functor between the bounded derived categories of coherent sheaves on two smooth projective varieties, we deal with the question whether this functor is of Fourier--Mukai type. Several related questions are answered and many open problems are stated.
		\end{abstract}

		\maketitle

	\section{Introduction}\label{Intro}
	
	Fourier--Mukai functors are ubiquitous in geometric contexts and the general belief is that they actually are \emph{the} geometric functors. Essentially, all known exact functors are of Fourier--Mukai type in the setting of proper schemes. This paper may be seen as an attempt to survey some recent works addressing this expectation according to several points of view.
	
	Let us first recall the definition of this kind of functors. Assume that $X_1$ and $X_2$ are smooth projective varieties over a field $\K$ and denote by $\Db(X_i):=\Db(\Coh(X_i))$ the bounded derived category of coherent sheaves on $X_i$. Given $\ke\in\Db(X_1\times X_2)$ we define the exact functor $\FM{\ke}\colon\Db(X_1)\to\Db(X_2)$ as
\begin{equation}\label{eqn:FM}
\FM{\ke}(-):=\R(p_2)_*(\ke\Lotimes p_1^*(-)),
\end{equation}
where $p_i\colon X_1\times X_2\to X_i$ is the natural projection. An exact functor $\fun{F}\colon\Db(X_1)\to\Db(X_2)$ is a \emph{Fourier--Mukai functor} (or of \emph{Fourier--Mukai type}) if there exists $\ke\in\Db(X_1\times X_2)$ and an isomorphism of exact functors $\fun{F}\iso\FM{\ke}$. The complex $\ke$ is called a \emph{kernel} of $\fun{F}$. This definition will be extended to more general settings in the course of the paper allowing $X_i$ to be singular or considering supported derived categories.

\smallskip

One of the first examples of these functors appeared in Mukai's seminal paper \cite{Mu1} dating 1981. Mukai studied what he originally called a \emph{duality} between the bounded derived category $\Db(A)$ of an abelian variety (or a complex torus) $A$ and the one of its dual variety $\hat{A}$. Such a duality is nothing but an equivalence
	\[
	\fun{F}\colon\Db(A)\lto\Db(\hat{A})
	\]
realized as a Fourier--Mukai functor whose kernel is precisely the universal Picard sheaf $\kp\in\Coh(A\times\hat{A})$. In other words, the inverse of $\fun{F}$ sends a skyscraper sheaf $\ko_\p$ (here $\p$ is a closed point of $\hat{A}$) on $\hat{A}$ to the degree $0$ line bundle $L_\p\in\Pic^0(A)$ parametrized by $\p$.

This discussion motivates the appearance of the word `Mukai' in the name of these functors. On the other hand, Mukai himself clarified why they should be thought of as a sort of Fourier transforms. Indeed, the push forward along the projection is the analogue of the integration while the Fourier--Mukai kernel is the same as the kernel in a Fourier transform.

A more precise historical reconstruction of the origins of the notion of Fourier--Mukai functor should certainly point to the paper \cite{SKK} where the notion of \emph{Fourier--Sato transform} was introduced (see also Section 3.7 in \cite{KS}). This is probably one of the first attempts to `categorify' the Fourier transform.

\smallskip

There are several possible directions along which to study these functors. In this paper, we are interested in the very specific but important question already mentioned at the beginning:

\smallskip

{\centerline{\emph{Are all exact functors between the bounded derived categories}}}

{\centerline{\emph{of smooth projective varieties of Fourier--Mukai type?}}}

\smallskip

\noindent This is certainly one of the main open problems in the literature concerning the special geometric incarnation of the theory of derived categories. Our aim is to survey the more recent approaches to it and, at the same time, to analyze other related questions concerning, for example, the uniqueness of the Fourier--Mukai kernels. The relevance of the question above cannot be overestimated. Indeed, once we know that an exact functor is of Fourier--Mukai type and the base field is $\CC$, then we can study its action on various cohomology groups and deform it along with the varieties. In Section \ref{sec:motivations} we survey some of these issues.

The main problems we want to consider are listed in Section \ref{subsec:questions}. The breakthroughs in the theory are contained in \cite{Or1} and, more recently, in \cite{LO}, where new inputs from the theory of dg-categories are taken into account. Namely,

\begin{itemize}
	\item[(A)] {\bf Orlov \cite{Or1}:} If $\fun{F}\colon\Db(X_1)\to\Db(X_2)$ is a fully faithful functor and $X_1$, $X_2$ are smooth projective varieties, then there exists a unique (up to isomorphism) $\ke\in\Db(X_1\times X_2)$ and an isomorphism of exact functors $\fun{F}\iso\FM{\ke}$ (see Theorem \ref{thm:Orlov}).
	
	\smallskip
	
	\item[(B)] {\bf Lunts--Orlov \cite{LO}:} The same holds when $X_1$ and $X_2$ are projective schemes and we deal with the categories of perfect complexes on them (see Theorem \ref{thm:LO}).
\end{itemize}
These two results will provide the two leading references in this paper. They will be explained in Sections \ref{sec:problems} and \ref{sec:existence} and, at the same time, we will study to which extent we may expect that they can be extended and generalized. The examples that seem to be encouraging in this direction are roughly the following (more precise statements are given in the forthcoming sections):

\begin{itemize}
	\item[(a)] {\bf To\"en \cite{To}:} Quasi-functors between dg-enhancements of the categories of perfect complexes on projective schemes (see Theorem \ref{thm:Toen}).
	
	\smallskip
	
	\item[(b)] Exact functors between the abelian categories of coherent sheaves on smooth projective varieties (see Proposition \ref{prop:exab} and \cite{CS}).
\end{itemize}
In both cases, one proves that these functors are of Fourier--Mukai type (in an appropriate sense) and that the kernel is unique (up to isomorphism). We also point to \cite{BZFN} (and \cite{Pr}) for results extending those in \cite{To}.

The fact that an optimistic point of view about extending (A) and (B) in full generality may be too much is discussed in Section \ref{sec:solutions}.

\smallskip

During the exposition we will explain and list several open problems appearing naturally in many geometric contexts. They will be presented all along the paper and, in particular, in Section \ref{sec:openproblems}. Motivations are discussed in Section \ref{sec:motivations}. Sections \ref{sec:problems} and \ref{sec:existence} deal with the main results and techniques now available in the literature. Of course, we do not pretend to be exhaustive and complete in our presentation. For example, other overviews on the subject but from completely different perspectives are in \cite{AH,vB} (and, of course, in \cite{H}).

\smallskip

\noindent{\bf Notation.} In the paper, $\K$ is a field. Unless
otherwise stated, all schemes are assumed to be of finite type and
separated over $\K$; similarly, all additive (in particular,
triangulated) categories and all additive (in particular, exact)
functors will be assumed to be $\K$-linear. An additive category will
be called $\Hom$-finite if the $\K$-vector space $\Hom(A,B)$ is finite
dimensional for any two objects $A$ and $B$. If $\cat{A}$ is an abelian
(or more generally an exact) category, $\D(\cat{A})$ denotes the
derived category of $\cat{A}$ and $\Db(\cat{A})$ its full subcategory
of bounded complexes. Unless stated otherwise, all functors are derived
even if, for simplicity, we use the same symbol for a functor and its
derived version.

	\section{Motivations}\label{sec:motivations}
	
	In this section we would like to motivate the relevance of Fourier--Mukai functors a bit more. We stress their appearance in moduli problems and we give indications concerning the way they induce actions on various cohomologies. The reader interested in an introduction about derived and triangulated categories in geometric contexts can have a look at \cite{H}.
	
\subsection{First properties and examples from moduli problems}\label{subsubsec:moduli}

There are several instances where Fourier--Mukai functors appear. To make this clear, we discuss some examples.

\begin{ex}\label{ex:FM}
	Let $X_1$ and $X_2$ be smooth projective varieties.
	
	(i) Given an object $\ke\in\Db(X_1)$, the functor $\fun{F}(-)=\ke\otimes(-)$ is of Fourier--Mukai type. Namely, its Fourier--Mukai kernel is the object $\Delta_*\ke$, where $\Delta\colon X_1\to X_1\times X_1$ is the diagonal embedding.
	
A special example is provided by the Serre functor of $X_i$ which is the exact equivalence $\fun{S}_{X_i}(-)=(-)\otimes\omega_{X_i}[\dim(X_i)]$, where $\omega_{X_i}$ is the dualizing sheaf of $X_i$. Hence $\fun{S}_{X_i}$ is of Fourier--Mukai type. For later use, set $S_{X_i}:=\omega_{X_i}[\dim(X_i)]$.
	
	(ii) For a given morphism $f\colon X_1\to X_2$, denote by $\Gamma_f$ its graph. Then $f_*$ is a Fourier--Mukai functor with kernel $\ko_{\Gamma_f}$. Analogously, one can show that $f^*$ is a Fourier--Mukai functor whose kernel is always $\ko_{\Gamma_f}$, providing now a functor $\Db(X_2)\to\Db(X_1)$.
\end{ex}

We list here a number of useful properties.

\begin{prop}\label{prop:FMpropert}
	Let $X_1$ and $X_2$ be smooth projective varieties over $\K$ and let $\FM{\ke}$ be a Fourier--Mukai functor.
	
	{\rm (i)} The left and right adjoints of $\FM{\ke}$ exist and are of Fourier--Mukai type with kernels $\ke_L:=\ke\dual\otimes p_2^*S_{X_2}$ and $\ke_R:=\ke\dual\otimes p_1^*S_{X_1}$ respectively, where $p_i\colon X_1\times X_2\to X_i$ is the projection.
	
	{\rm (ii)} The composition of two Fourier--Mukai functors is again of Fourier--Mukai type.
\end{prop}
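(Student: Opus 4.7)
For part (i), my plan is to establish the right adjoint via Grothendieck--Verdier duality and then derive the left adjoint by an analogous computation. Fix $A \in \Db(X_1)$ and $B \in \Db(X_2)$ and start from
\[
\Hom_{X_2}(\FM{\ke}(A),B) = \Hom_{X_2}(R(p_2)_*(\ke \Lotimes p_1^* A), B).
\]
Applying the adjunction $(R(p_2)_*, p_2^!)$ moves this to $\Hom_{X_1 \times X_2}(\ke \Lotimes p_1^* A, p_2^! B)$. Since $X_1$ is smooth, $p_2$ is smooth with relative dualizing sheaf $p_1^* \omega_{X_1}$, so $p_2^! B \iso p_2^* B \Lotimes p_1^* S_{X_1}$. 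Using that $\ke$ is perfect (because $X_1 \times X_2$ is smooth) to turn $\sHom(\ke,-)$ into $\ke\dual \Lotimes (-)$, the $\otimes$--$\sHom$ adjunction rewrites the group as
\[
\Hom_{X_1 \times X_2}(p_1^* A, \ke\dual \Lotimes p_2^* B \Lotimes p_1^* S_{X_1}).
\]
Finally the $(p_1^*, R(p_1)_*)$ adjunction identifies this with $\Hom_{X_1}(A, \FM{\ke_R}(B))$, as claimed. The left adjoint is obtained by reading the chain in the opposite order, using Grothendieck duality along $p_1$ (whose relative dualizing is $p_2^* \omega_{X_2}$) instead of $p_2$, which produces the twist by $p_2^* S_{X_2}$ appearing in $\ke_L$.

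For part (ii), given $\ke_1 \in \Db(X_1 \times X_2)$ and $\ke_2 \in \Db(X_2 \times X_3)$, with projections $\pi_{ij}\colon X_1 \times X_2 \times X_3 \to X_i \times X_j$, the natural candidate kernel is the convolution
\[
\ke_2 \kcomp \ke_1 := R(\pi_{13})_*\bigl(\pi_{12}^* \ke_1 \Lotimes \pi_{23}^* \ke_2\bigr) \in \Db(X_1 \times X_3).
\]
To prove $\FM{\ke_2 \kcomp \ke_1} \iso \FM{\ke_2} \comp \FM{\ke_1}$, the key steps I would carry out, in order, are: expand both sides using \eqref{eqn:FM}; apply flat base change to the Cartesian square formed by $\pi_{12}$, $\pi_{23}$ and the projections $X_1 \times X_2 \to X_2$, $X_2 \times X_3 \to X_2$, which swaps the intermediate push--pull across $X_2$ and lifts the computation onto the triple product; use the projection formula to absorb $\pi_{12}^* \ke_1$ and $\pi_{23}^* \ke_2$ into a single tensor product; and finally factor the pushforward to $X_3$ through $\pi_{13}$ to recognise $\FM{\ke_2 \kcomp \ke_1}$.

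The main obstacle I expect is bookkeeping rather than depth: one must ensure that each application of derived base change, projection formula, and Grothendieck duality is performed in a situation where the isomorphism holds. However, since both $X_i$ are smooth projective, every object in sight is perfect and of bounded tor-amplitude and the structure morphisms are flat and proper, so all these formulas hold without restriction and the whole argument reduces to a coherent chain of natural isomorphisms.
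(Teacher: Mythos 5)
Your proof is correct and is exactly the standard argument (as in \cite[Props.\ 5.9 and 5.10]{H}) that the paper has in mind when it states this proposition without proof and leaves the kernel in (ii) to the reader: Grothendieck--Verdier duality along the two projections for the adjoints, and base change plus the projection formula for the convolution kernel $R(\pi_{13})_*(\pi_{12}^*\ke_1\Lotimes\pi_{23}^*\ke_2)$. All the isomorphisms you invoke do hold in this smooth projective setting for the reasons you give, so there is nothing to add.
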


\noindent We leave it to the reader to explicitly determine a kernel in (ii) above.

\medskip

Let us now see some more complicated but interesting examples. Indeed, soon after \cite{Mu1}, it was clear that Fourier--Mukai functors appear in many moduli problems. This is the case of K3 surfaces (i.e.\ smooth, compact, complex simply connected surfaces with trivial canonical bundle) and moduli spaces of stable sheaves on them. Following \cite{Mu2}, let $X$ be a projective K3 surface and $M$ a fine moduli space of stable sheaves on $X$ with topological invariants fixed in such a way that $M$ is again a projective K3 surface. The universal family $\ke\in\Coh(M\times X)$ associated to this moduli problem provides an equivalence of Fourier--Mukai type
\[
\FM{\ke}\colon\Db(M)\lto\Db(X)
\]
sending a skyscraper sheaf to a stable sheaf on $X$. Most remarkably, it was observed in \cite{Or1} that all K3 surfaces $Y$ such that $\Db(X)\iso\Db(Y)$ are actually isomorphic to moduli spaces of stable sheaves on $X$.

In higher dimensions the interplay between Fourier--Mukai functors, geometric problems and moduli interpretations of them have been extensively studied. There are many occurrences in the context of birational geometry and in the more modern theory of stability conditions due to Bridgeland. We refrain from discussing them in this paper.

\subsection{Action on (singular) cohomology}\label{subsubsec:cohomo}

Having a description of an exact functor as a Fourier--Mukai functor allows one to define an action on cohomologies and homologies of various types. This may be very useful to describe the groups of autoequivalences of the derived categories of smooth projective varieties, which are rather complicated algebraic objects as soon as the variety has trivial canonical bundle.

The first highly non-trivial example we have in mind is the group of autoequivalences of the derived category of a projective K3 surface $X$. This group has a very complicated structure coming from the presence of the so called \emph{spherical objects} in $\Db(X)$ (i.e.\ objects whose endomorphism graded algebra is isomorphic to the cohomology of a $2$-sphere). The idea proposed in \cite{Or1} is to approach the analysis of $\Aut(\Db(X))$ by studying its action on singular cohomology.

\medskip

To spell this out clearly, we start with some general remarks. Assume
that $X_1$ and $X_2$ are smooth complex projective varieties and let
$\FM{\ke}\colon\Db(X_1)\to\Db(X_2)$ be a Fourier--Mukai functor with
kernel $\ke\in\Db(X_1\times X_2)$. Then the induced morphism at the
level of Grothendieck groups is given by the morphism
$\FMK{[\ke]}\colon K(X_1)\to K(X_2)$ defined by
\[
\FMK{[\ke]}(e):=(p_2)_*([\ke]\cdot p_1^*(e)),
\]
where $p_i\colon X_1\times X_2\to X_i$ is the natural projection.

Going further, for $\kg\in\Db(X_i)$, one can consider the \emph{Mukai vector}
\[
v([\kg]):=\mathrm{ch}(\kg)\cdot\sqrt{\mathrm{td}(X_i)}
\]
of $\kg$. When the context is clear, we write $v(\kg)$ instead of
$v([\kg])$. Now the morphism $\FMK{[\ke]}\colon K(X_1)\to K(X_2)$
gives rise to a map $\FMH{v([\ke])}\colon H^*(X_1,\QQ)\to H^*(X_2,\QQ)$ such that
\[
\FMH{v([\ke])}\colon b\longmapsto (p_2)_*(v([\ke])\cdot p_1^*(b)).
\]
The Grothendieck--Riemann--Roch Theorem shows that the following
diagram commutes:
\begin{eqnarray}\label{eqn:coh}
\xymatrix{K(X_1)\ar[d]_{v(-)}\ar[rr]^{\FMK{[\ke]}}& &
K(X_2)\ar[d]^{v(-)}\\H^*(X_1,\QQ)\ar[rr]^{\FMH{v([\ke])}}& &
H^*(X_2,\QQ).}
\end{eqnarray}

From now on, given a Fourier--Mukai functor
$\FM{\ke}\colon\Db(X_1)\to\Db(X_2)$, we denote $\FMH{v([\ke])}$ by $\FMH{\ke}$. The following is a fairly easy remark from \cite{Or1}.

\begin{prop}\label{prop:FM1}
With the above assumptions, the morphism $\FMH{\ke}\colon H^*(X_1,\QQ)\to H^*(X_2,\QQ)$ is an isomorphism of $\QQ$-vector spaces if $\FM{\ke}$ is an equivalence.
\end{prop}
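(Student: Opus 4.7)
The plan is to construct an explicit two-sided inverse for $\FMH{\ke}$ coming from the quasi-inverse of $\FM{\ke}$. Since $\FM{\ke}\colon\Db(X_1)\to\Db(X_2)$ is an equivalence, its quasi-inverse coincides with both its left and right adjoint. By Proposition \ref{prop:FMpropert}(i), the left adjoint is again of Fourier--Mukai type, with kernel $\ke_L=\ke\dual\otimes p_2^*S_{X_2}\in\Db(X_1\times X_2)$. By Proposition \ref{prop:FMpropert}(ii), the compositions $\FM{\ke_L}\comp\FM{\ke}$ and $\FM{\ke}\comp\FM{\ke_L}$ are Fourier--Mukai functors whose kernels are obtained by the usual convolution $\ke_L\kcomp\ke\in\Db(X_1\times X_1)$ and $\ke\kcomp\ke_L\in\Db(X_2\times X_2)$, respectively. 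Since each such composition is isomorphic (as an exact functor) to the corresponding identity functor, and the identity $\id_{\Db(X_i)}$ is itself a Fourier--Mukai functor with kernel $\sod_{X_i}$ (the structure sheaf of the diagonal, as in Example \ref{ex:FM}(ii) applied to $f=\id$), the fully faithful uniqueness result of Orlov recalled in (A) yields isomorphisms $\ke_L\kcomp\ke\iso\sod_{X_1}$ in $\Db(X_1\times X_1)$ and $\ke\kcomp\ke_L\iso\sod_{X_2}$ in $\Db(X_2\times X_2)$.

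Next, I would establish two general facts about the assignment $\ke\mapsto\FMH{\ke}$:

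\smallskip
\noindent\textbf{(a) Compatibility with composition.} For $\ke\in\Db(X_1\times X_2)$ and $\kf\in\Db(X_2\times X_3)$ one has $\FMH{\kf\kcomp\ke}=\FMH{\kf}\comp\FMH{\ke}$. At the level of Grothendieck groups this follows from flat base change and the projection formula applied to the cartesian square formed by the three projections $X_1\times X_2\times X_3\to X_i\times X_j$, giving $\FMK{[\kf\kcomp\ke]}=\FMK{[\kf]}\comp\FMK{[\ke]}$; the analogous identity for $\FMH{}$ is then obtained from the commutative square \eqref{eqn:coh} and the multiplicativity of the Todd class in the pair $(p_1,p_3)$ of projections from $X_1\times X_2\times X_3$, which is the content of Grothendieck--Riemann--Roch.

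\smallskip
\noindent\textbf{(b) The diagonal acts trivially.} The kernel $\sod_X$ defines the identity on $\Db(X)$, hence $\FMK{[\sod_X]}=\id_{K(X)}$. By \eqref{eqn:coh} this forces $\FMH{\sod_X}\comp v=v$ on $K(X)$; since for smooth projective $X$ the Chern character (and hence the Mukai vector) induces a $\QQ$-linear isomorphism $K(X)\otimes\QQ\isomor H^*(X,\QQ)$, we conclude that $\FMH{\sod_X}=\id_{H^*(X,\QQ)}$.

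Combining (a) and (b) with the isomorphisms $\ke_L\kcomp\ke\iso\sod_{X_1}$ and $\ke\kcomp\ke_L\iso\sod_{X_2}$ obtained above yields
\[
\FMH{\ke_L}\comp\FMH{\ke}=\FMH{\ke_L\kcomp\ke}=\FMH{\sod_{X_1}}=\id_{H^*(X_1,\QQ)},
\]
and symmetrically $\FMH{\ke}\comp\FMH{\ke_L}=\id_{H^*(X_2,\QQ)}$, proving that $\FMH{\ke}$ is a $\QQ$-linear isomorphism. The main technical step is the compositionality (a), which requires the Grothendieck--Riemann--Roch computation across the three projections of the triple product; by contrast, step (b) is essentially tautological once the rational Chern character is known to be an isomorphism, and the appeal to Orlov's uniqueness (A) is the only nontrivial input needed to lift the functor-level identities $\FM{\ke_L}\comp\FM{\ke}\iso\id$ to genuine kernel identifications.
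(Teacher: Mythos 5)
Your overall route---compose with the adjoint kernel, identify the convolutions with the diagonal kernel via Orlov's uniqueness, and then use compositionality of $\FMH{}$ together with $\FMH{\ko_\Delta}=\id$---is the standard argument (the paper itself gives no proof, only a pointer to \cite{Or1}, and this is essentially what that reference does). Steps 1--3 and your step (a) are correct as stated.

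Step (b), however, contains a genuine error. You assert that the Chern character, and hence the Mukai vector, induces a $\QQ$-linear isomorphism $K(X)\otimes\QQ\isomor H^*(X,\QQ)$ for any smooth projective $X$. This is false: the image of $\mathrm{ch}$ lies in the algebraic part of cohomology, so already for a projective K3 surface the map misses the transcendental part of $H^2$ (and by Mumford its kernel on $0$-cycles is huge). The diagram \eqref{eqn:coh} therefore only tells you that $\FMH{\ko_\Delta}$ restricts to the identity on the image of $v$, which is a proper subspace of $H^*(X,\QQ)$ in general---indeed, the entire K3 discussion in Section \ref{subsubsec:cohomo} is interesting precisely because equivalences act nontrivially on classes outside that image. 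The correct justification of (b) is a direct computation rather than an appeal to \eqref{eqn:coh}: Grothendieck--Riemann--Roch for the diagonal embedding gives $\mathrm{ch}(\ko_\Delta)\cdot\mathrm{td}(X\times X)=\Delta_*\mathrm{td}(X)$, hence by the projection formula
\[
v(\ko_\Delta)=\Delta_*\mathrm{td}(X)\cdot\mathrm{td}(X\times X)^{-1/2}=\Delta_*\bigl(\mathrm{td}(X)\cdot\Delta^*\mathrm{td}(X\times X)^{-1/2}\bigr)=\Delta_*(1)=[\Delta],
\]
using $\Delta^*\mathrm{td}(X\times X)=\mathrm{td}(X)^2$; and the correspondence $b\mapsto (p_2)_*([\Delta]\cdot p_1^*b)$ is the identity of $H^*(X,\QQ)$ by the classical property of the diagonal class. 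With this replacement, your proof is complete.
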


For a positive integer $n$, one may take the Hodge decomposition $H^n(X_i,\CC)\iso\bigoplus_{p+q=n}H^{p,q}(X_i)$. A Fourier--Mukai equivalence does not preserve such a decomposition as, in general, it does not preserve the grading of the cohomology rings. Nevertheless, one has the following.

\begin{prop}\label{prop:FM2}
If $\FM{\ke}$ is an equivalence, the morphism $\FMH{\ke}$ induces isomorphisms
\[
\bigoplus_{p-q=i}H^{p,q}(X_1)\iso\bigoplus_{p-q=i}H^{p,q}(X_2)
\]
for all integers $i$.
\end{prop}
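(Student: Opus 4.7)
\medskip

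The plan is to show that $\FMH{\ke}$ respects the grading of $H^*(X_j,\CC)$ given by the difference $p-q$ of Hodge indices, and then to combine this with Proposition \ref{prop:FM1} to get the desired isomorphisms on each graded piece. The strategy hinges on a single clean observation: the kernel's Mukai vector $v([\ke])$ lives in the \emph{Hodge diagonal} $\bigoplus_p H^{p,p}(X_1\times X_2)$, and cup product plus Künneth plus proper pushforward all behave predictably with respect to $(p-q)$.

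First I would check that $v([\ke])\in\bigoplus_p H^{p,p}(X_1\times X_2,\CC)$. Since $\ke$ is an object of $\Db(X_1\times X_2)$, every graded component of $\mathrm{ch}(\ke)$ is a Hodge class (i.e.\ of type $(p,p)$); likewise every graded component of $\mathrm{td}(X_1\times X_2)$, and hence of $\sqrt{\mathrm{td}(X_1\times X_2)}$, is of type $(p,p)$. So their product $v([\ke])$ decomposes as $\sum_r v_r$ with $v_r\in H^{r,r}(X_1\times X_2)$.

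Next I would analyse how $\FMH{\ke}$ acts on a pure class $b\in H^{p,q}(X_1)$. By the compatibility of the Künneth decomposition with the Hodge decomposition, $p_1^*b\in H^{p,q}(X_1\times X_2)$. Cup product with $v_r\in H^{r,r}$ produces an element of $H^{p+r,q+r}(X_1\times X_2)$, so $v([\ke])\cdot p_1^*b$ lies in $\bigoplus_r H^{p+r,q+r}(X_1\times X_2)$; in particular every Künneth/Hodge summand has difference of indices equal to $p-q$. Finally, for the smooth proper projection $p_2$, the pushforward sends $H^{a,b}(X_1\times X_2)$ into $H^{a-d_1,b-d_1}(X_2)$, where $d_1=\dim X_1$, because $(p_2)_*$ has bidegree $(-d_1,-d_1)$. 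This preserves the value of $a-b$, so
\[
\FMH{\ke}\left(H^{p,q}(X_1)\right)\subseteq \bigoplus_{s-t=p-q}H^{s,t}(X_2).
\]
Summing over all $(p,q)$ with $p-q=i$, the map $\FMH{\ke}$ carries $\bigoplus_{p-q=i}H^{p,q}(X_1)$ into $\bigoplus_{p-q=i}H^{p,q}(X_2)$.

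To finish, I would invoke Proposition \ref{prop:FM1}: since $\FM{\ke}$ is an equivalence, $\FMH{\ke}$ is an isomorphism of $\QQ$-vector spaces, hence of $\CC$-vector spaces after extension of scalars. Because $\FMH{\ke}$ preserves the direct-sum decomposition indexed by $i=p-q$ and distinct values of $i$ map to distinct components, each graded summand $\bigoplus_{p-q=i}H^{p,q}(X_1)\to\bigoplus_{p-q=i}H^{p,q}(X_2)$ is itself an isomorphism. There is no real obstacle in the argument; the only point that requires care is keeping track of bidegrees under Künneth and under the shift $(-d_1,-d_1)$ introduced by $(p_2)_*$, so that the invariance of $p-q$ is genuinely verified at every step.
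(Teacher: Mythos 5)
Your argument is correct, and it is exactly the standard proof of this statement (the one the survey implicitly refers to, cf.\ \cite[Prop.\ 5.39]{H}): the paper itself states Proposition \ref{prop:FM2} without proof, so there is nothing different to compare against. The two points you rightly isolate --- that $v([\ke])$ lies in $\bigoplus_p H^{p,p}(X_1\times X_2)$ because Chern characters and (square roots of) Todd classes of objects of $\Db(X_1\times X_2)$ are algebraic, and that a bijection respecting a direct-sum decomposition restricts to an isomorphism on each summand once combined with Proposition \ref{prop:FM1} --- are precisely the content of the result.
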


The vector space $H^*(X_i,\CC)$ can be endowed with some additional structure. Namely, for $v=\sum v_j\in\bigoplus_j H^{j}(X_i,\CC)$, set $v\dual:=\sum\sqrt{-1}^jv_j$. Then, for all $v,w\in H^*(X_i,\CC)$, we can define the \emph{Mukai pairing}
\[
\langle v,w\rangle_{X_i}:=\int_{X_i}\mathrm{exp}(\mathrm{c}_1(X_i)/2).(v\dual. w).
\]

\begin{prop}\label{prop:FM3}
If $\FM{\ke}$ is an equivalence, then the morphism $\FMH{\ke}$ preserves the Mukai pairing.
\end{prop}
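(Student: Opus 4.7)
The plan is to exploit the invertibility of $\FMH{\ke}$ (Proposition \ref{prop:FM1}) together with a cohomological adjunction identity relating $\FMH{\ke}$ to the transform of its adjoint. By Proposition \ref{prop:FMpropert}(i), $\FM{\ke}$ admits a left adjoint $\FM{\ke_L}$ with $\ke_L = \ke\dual \otimes p_2^* S_{X_2}$, and since $\FM{\ke}$ is an equivalence, $\FM{\ke_L}$ is its quasi-inverse. The heart of the argument is to establish, for all $a \in H^*(X_1, \QQ)$ and $b \in H^*(X_2, \QQ)$, the identity
\[
\langle \FMH{\ke}(a), b \rangle_{X_2} = \langle a, \FMH{\ke_L}(b) \rangle_{X_1}.
\]

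To prove this cohomological adjunction formula, I would expand both sides using the definitions of the Mukai pairing and of $\FMH{-}$, and apply the projection formula for $p_1$ and $p_2$, reducing both expressions to integrals over $X_1 \times X_2$ of products of $p_1^* a$, $p_2^* b$ and explicit characteristic classes. The key bookkeeping is to check that the factors $\exp(c_1(X_i)/2)$ in the two Mukai pairings, the $\sqrt{\mathrm{td}}$-twist in the Mukai vector, the operation $(-)\dual$, the $S_{X_2}$-twist distinguishing $\ke$ from $\ke_L$, and the Todd classes produced by Grothendieck--Riemann--Roch combine --- via the multiplicativity $\mathrm{td}(X_1 \times X_2) = p_1^*\mathrm{td}(X_1)\cdot p_2^*\mathrm{td}(X_2)$ --- to make the two sides agree. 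This is a formal but delicate characteristic-class computation, and is essentially where one pays for the clever normalizations in the definitions of both the Mukai vector and the Mukai pairing.

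Granted the adjunction formula, the proposition follows quickly: for any $a, b \in H^*(X_1, \QQ)$,
\[
\langle \FMH{\ke}(a), \FMH{\ke}(b) \rangle_{X_2} = \langle a, \FMH{\ke_L}\FMH{\ke}(b) \rangle_{X_1} = \langle a, b \rangle_{X_1},
\]
where the second equality uses that $\FMH{\ke_L} \comp \FMH{\ke} = \id$ on $H^*(X_1, \QQ)$. This in turn follows from the compatibility of cohomological transforms with convolution of kernels (again a Grothendieck--Riemann--Roch calculation) applied to the isomorphism $\FM{\ke_L} \comp \FM{\ke} \iso \id_{\Db(X_1)}$; alternatively, Proposition \ref{prop:FM1} gives that $\FMH{\ke}$ is an isomorphism, and the adjunction formula itself forces its inverse to be $\FMH{\ke_L}$. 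The main obstacle is the adjunction formula above: it is the cohomological shadow of Serre duality, and its verification hinges on the precise interplay among the $\sqrt{\mathrm{td}}$-twist of the Chern character, the factor $\exp(c_1/2)$ in the Mukai pairing, and the dualizing operation $(-)\dual$ defined with powers of $\sqrt{-1}$.
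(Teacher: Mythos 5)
The paper itself states Proposition \ref{prop:FM3} without proof --- it is quoted from \cite{Or1} and the surrounding literature --- so there is no internal argument to measure you against; your outline is the standard one (see \cite[Prop.\ 5.44]{H} or \cite{Cal2}): reduce everything to a cohomological adjunction identity proved by Grothendieck--Riemann--Roch and the projection formula, then compose with the quasi-inverse. As a plan this is correct, but two steps need more care than you give them. First, $\FMH{\ke_L}\comp\FMH{\ke}=\id$ does not follow formally from $\FM{\ke_L}\comp\FM{\ke}\iso\id$: compatibility with convolution gives $\FMH{\ke_L}\comp\FMH{\ke}=\FMH{\kf}$ for the convolution kernel $\kf$, and one must still argue that $\FMH{\kf}=\FMH{\ko_\Delta}=\id$, e.g.\ by Orlov's uniqueness of the kernel of the fully faithful functor $\id$ (Theorem \ref{thm:Orlov}) or by Corollary \ref{cor:Kcohom}. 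Your proposed shortcut --- that the adjunction formula ``forces'' $\FMH{\ke_L}$ to be the inverse of $\FMH{\ke}$ --- is circular: extracting $\FMH{\ke_L}\comp\FMH{\ke}=\id$ from the adjunction identity via non-degeneracy of the pairing is equivalent to the very statement being proved. Second, since the Mukai pairing is not symmetric in general, the slot in which each adjoint sits matters: Serre duality for $\Hom(\FM{\ke}(-),-)$ naturally produces $\ke_R$ in the position where you have written $\ke_L$; this is harmless here only because $\ke_L\iso\ke_R$ when $\FM{\ke}$ is an equivalence. Finally, the heart of the matter --- the characteristic-class verification of the adjunction formula, where the $\sqrt{\mathrm{td}}$-normalization, the factor $\mathrm{exp}(\mathrm{c}_1/2)$ and the operation $(-)\dual$ must conspire --- is correctly located but only asserted, so what you have is an accurate roadmap rather than a complete proof.
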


\smallskip

Before going back to specific examples, let us mention a property that will be discussed later on in a different context. Here we assume that $\FM{\ke},\FM{\kf}\colon\Db(X_1)\to\Db(X_2)$ are Fourier--Mukai functors and not necessarily equivalences.

\begin{lem}\label{lem:uniqcoh}
	If $\FMH{\ke}=\FMH{\kf}$, then $v([\ke])=v([\kf])$.
\end{lem}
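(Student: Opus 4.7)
The plan is to reduce the claim to a standard injectivity statement about correspondences on cohomology. Set
\[
\alpha := v([\ke]) - v([\kf]) \in H^*(X_1 \times X_2, \QQ).
\]
The hypothesis $\FMH{\ke} = \FMH{\kf}$ becomes
\[
(p_2)_*(\alpha \cdot p_1^*(b)) = 0 \quad \text{for every } b \in H^*(X_1, \QQ),
\]
and the goal is to deduce $\alpha = 0$. In other words, I want to show that the $\QQ$-linear map $H^*(X_1 \times X_2, \QQ) \to \Hom_\QQ(H^*(X_1, \QQ), H^*(X_2, \QQ))$ sending $\alpha$ to $b \mapsto (p_2)_*(\alpha \cdot p_1^*(b))$ is injective; nothing particular to Mukai vectors or the rationality of $v([\ke])$ is needed beyond the fact that these are cohomology classes on $X_1 \times X_2$.

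The key input is the Künneth isomorphism together with Poincaré duality on $X_1$. Choose a homogeneous $\QQ$-basis $\{a_i\}$ of $H^*(X_1, \QQ)$, and let $\{a_i\dual\}$ be the Poincaré-dual basis, characterized by $\int_{X_1} a_i \cdot a_j\dual = \delta_{ij}$. By Künneth there exist uniquely determined $\beta_i \in H^*(X_2, \QQ)$ with
\[
\alpha = \sum_i p_1^*(a_i) \cdot p_2^*(\beta_i).
\]
Using the projection formula summand by summand, together with the identity $(p_2)_*(p_1^*(c)) = \int_{X_1} c$ (where $\int_{X_1}$ vanishes on non-top-degree classes), one obtains
\[
(p_2)_*(\alpha \cdot p_1^*(a_j\dual)) = \sum_i \Bigl(\int_{X_1} a_i \cdot a_j\dual\Bigr) \beta_i = \beta_j.
\]
Plugging $b = a_j\dual$ into the vanishing hypothesis thus forces $\beta_j = 0$ for every $j$, whence $\alpha = 0$.

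There is essentially no genuine obstacle in this argument: it is pure linear algebra on Künneth components, and does not use anything about the derived-category origin of $v([\ke])$ and $v([\kf])$. The only bookkeeping one has to be careful about is choosing the basis $\{a_i\}$ homogeneous, so that the Poincaré-dual basis is well-defined and the computation of $(p_2)_* p_1^*$ can be carried out degree by degree.
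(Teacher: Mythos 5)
Your argument is correct and is essentially the paper's own proof: the paper simply says to ``apply the K\"unneth decomposition for the cohomology of the product,'' and you have filled in exactly the intended details (K\"unneth components paired against a Poincar\'e-dual basis via the projection formula). No discrepancy to report.
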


\begin{proof}
	The morphisms $\FMH{\ke}$ and $\FMH{\kf}$ are induced by objects in $H^*(X_1\times X_2,\QQ)$. Now apply the K\"unneth decomposition for the cohomology of the product to get $v(\ke)=v(\kf)$.
\end{proof}

In particular, this means that the `cohomological Fourier--Mukai kernel' of cohomological Fourier--Mukai functors is \emph{always} uniquely determined. Due to what we will show in Section \ref{sec:solutions}, one can speak about \emph{the} action of a Fourier--Mukai functor, being independent of the choice of the Fourier--Mukai kernel.

\medskip

Assume now that $X_1$ and $X_2$ are projective K3 surfaces and take a Fourier--Mukai equivalence $\FM{\ke}\colon\Db(X_1)\to\Db(X_2)$. A remark by Mukai shows that $\FMH{\ke}$ induces an isomorphism of $\ZZ$-modules $H^*(X_1,\ZZ)\iso H^*(X_2,\ZZ)$ in this case. The total cohomology $H^*(X_i,\ZZ)$ endowed with the Mukai pairing and the Hodge structure mentioned in Proposition \ref{prop:FM2}, is called the \emph{Mukai lattice} and denoted by $\widetilde H(X_i,\ZZ)$. Using the action of equivalences on cohomology and a bit of lattice theory, one can prove the following.

\begin{prop}\label{prop:BrMa}{\bf (\cite{BM}, Proposition 5.3.)}
	Given a projective K3 surface $X$, the number of isomorphism classes of K3 surfaces $Y$ such that $\Db(X)\iso\Db(Y)$ is finite.
\end{prop}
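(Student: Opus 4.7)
The plan is to use Orlov's representability together with Mukai's moduli-theoretic reconstruction so as to reduce the statement to a lattice-theoretic finiteness. Suppose $\Db(X)\iso\Db(Y)$; by Theorem \ref{thm:Orlov} such an equivalence is of Fourier--Mukai type with kernel $\ke\in\Db(X\times Y)$, and by Propositions \ref{prop:FM1}--\ref{prop:FM3} combined with the integrality statement for K3 surfaces recalled just before the Proposition, $\FMH{\ke}$ is a Hodge isometry of Mukai lattices $\widetilde H(X,\ZZ)\iso\widetilde H(Y,\ZZ)$. To each such $Y$ I attach the vector $v_Y:=\FMH{\ke}^{-1}(0,0,1)\in\widetilde H(X,\ZZ)$, where $(0,0,1)=v(\ko_y)$ is the Mukai vector of a point of $Y$; since $(0,0,1)$ is primitive, isotropic and of Hodge type $(1,1)$, so is $v_Y$.

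Any two equivalences $\Db(X)\iso\Db(Y)$ differ by the precomposition with an element of $\Aut(\Db(X))$, so the orbit of $v_Y$ under the image $G\subset O(\widetilde H(X,\ZZ))$ of $\Aut(\Db(X))$ acting cohomologically is independent of the chosen equivalence. Moreover, $Y\mapsto[v_Y]$ depends only on the isomorphism class of $Y$, because an isomorphism $Y\isomor Y'$ sends skyscrapers to skyscrapers. Next, following Mukai, one recovers $Y$ from $v_Y$ as a connected component of the fine moduli space $M_X(v_Y)$ of stable sheaves on $X$ with Mukai vector $v_Y$: indeed, the inverse Fourier--Mukai functor (which exists by Proposition \ref{prop:FMpropert}) sends the universal point family on $Y$ to a flat family of (shifted) stable sheaves on $X$ classified by $v_Y$, and for primitive isotropic $v_Y$ the moduli space $M_X(v_Y)$ is itself a K3 surface, hence connected. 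Thus $Y\mapsto[v_Y]$ is finite-to-one, and the statement reduces to the finiteness of the $G$-orbits of primitive isotropic $(1,1)$-classes in $\widetilde H(X,\ZZ)$.

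The hard part is this final orbit-finiteness. Since $G$ has finite index in the full group of Hodge isometries of $\widetilde H(X,\ZZ)$, one may work with the latter, which preserves the algebraic summand. The problem then becomes one about primitive isotropic vectors in the $(1,1)$-part of $\widetilde H(X,\ZZ)$, an even lattice of signature $(2,\rho)$ with $\rho\leq 20$; the required finiteness follows from a classical Eichler-type theorem for even indefinite lattices containing a hyperbolic plane, or equivalently from Nikulin's theory of primitive embeddings through the associated discriminant forms. This lattice-theoretic input is the substantive step, and the Mukai-theoretic reductions above are formal consequences of the cohomological machinery already developed in this section.
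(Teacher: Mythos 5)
The paper itself gives no proof of this Proposition: it is quoted from \cite{BM}, with only the remark that it follows from the cohomological action of equivalences ``and a bit of lattice theory''. Your overall architecture is exactly that intended route: produce the integral Hodge isometry $\FMH{\ke}$ of Mukai lattices, attach to each partner $Y$ the orbit of $v_Y:=(\FMH{\ke})^{-1}(0,0,1)$, and reduce to a finiteness statement for orbits of primitive isotropic algebraic classes. The well-definedness of the orbit, the finite-index reduction via Theorem \ref{thm:HMS}, and the final lattice-theoretic finiteness are all essentially sound (though note that Eichler's criterion requires \emph{two} hyperbolic planes, which $U\oplus\mathrm{NS}(X)$ does not contain when $X$ has Picard number one; what you actually need is the finiteness of $O(L)$-orbits of primitive isotropic vectors in an indefinite lattice of rank $\geq3$, which holds by reduction theory/finiteness of cusps).

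The genuine gap is in the step that makes $Y\mapsto[v_Y]$ finite-to-one. You assert that the inverse of $\FM{\ke}$ sends the skyscrapers $\ko_y$ to (shifted) stable sheaves on $X$, so that $Y$ embeds in the moduli space $M_X(v_Y)$, and you describe this as a ``formal consequence of the cohomological machinery''. It is not: for a general equivalence (e.g.\ one precomposed with spherical twists) the objects $\FM{\ke_R}(\ko_y)$ are honest complexes with cohomology in several degrees, and no shift makes them stable sheaves; moreover $M_X(v)$ can be empty before one normalizes $v$ by autoequivalence-induced isometries. The true statement that every Fourier--Mukai partner is a fine moduli space is Orlov's theorem (quoted in Section \ref{subsubsec:moduli}), whose proof requires the global Torelli theorem. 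The cleanest repair bypasses moduli spaces: if $[v_Y]=[v_{Y'}]$, then after adjusting by an autoequivalence of $\Db(X)$ one obtains an equivalence $\Db(Y)\to\Db(Y')$ whose induced Hodge isometry fixes $(0,0,1)$; it therefore descends to a Hodge isometry $(0,0,1)^{\perp}/\ZZ(0,0,1)\iso H^2(Y,\ZZ)\to H^2(Y',\ZZ)$, and the global Torelli theorem gives $Y\iso Y'$. Either way, the Torelli theorem is an unavoidable input that your write-up leaves implicit.
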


Nevertheless such a number can be arbitrarily large.

\begin{prop}\label{prop:Os}{\bf (\cite{Og} and \cite{St})}
For any positive integer $N$, there exist non-isomorphic K3 surfaces $X_1,\ldots,X_N$ such that $\Db(X_i)\iso\Db(X_j)$ for $i,j=1,\ldots,N$.
\end{prop}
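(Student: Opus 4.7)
The plan is to reduce the question to a purely lattice-theoretic counting problem via the Mukai--Orlov criterion for derived equivalence of K3 surfaces, and then exhibit explicit K3 surfaces whose lattice invariants force the Fourier--Mukai partner count to grow without bound.

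First, I would recall (as is implicit in the proof of Proposition \ref{prop:BrMa}) that for projective K3 surfaces the Mukai--Orlov theorem gives $\Db(X)\iso\Db(Y)$ if and only if there is a Hodge isometry $\widetilde H(X,\ZZ)\iso\widetilde H(Y,\ZZ)$; equivalently, by Nikulin's theory of primitive embeddings and Witt's theorem for the K3 lattice $\Lambda_{K3}=U^{\oplus 3}\oplus E_8(-1)^{\oplus 2}$, this happens exactly when the transcendental lattices $T(X)$ and $T(Y)$ are Hodge isometric. Thus the set of Fourier--Mukai partners of $X$ is in bijection with the set of isomorphism classes of K3 surfaces whose transcendental Hodge structure coincides with that of $X$.

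Next, I would specialize to a K3 surface $X$ with Picard number one and ample generator $H$ satisfying $H^2=2d$. For such $X$, counting Fourier--Mukai partners amounts to counting $\mathrm{O}(\Lambda_{K3})$-orbits of primitive embeddings $\langle 2d\rangle\hookrightarrow\Lambda_{K3}$ whose orthogonal complement carries the prescribed Hodge structure. Applying the counting formula of Hosono--Lian--Oguiso--Yau (or Oguiso's direct calculation in \cite{Og}) yields
\[
\#\{\text{FM partners of }X\}=2^{\omega(d)-1},
\]
where $\omega(d)$ is the number of distinct prime divisors of $d$; the factor of two comes from the involution swapping a divisor $a\mid d$ with $d/a$ whenever $\gcd(a,d/a)=1$.

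Given $N$, I would then choose $d=p_1p_2\cdots p_k$ for pairwise distinct primes with $2^{k-1}\geq N$. To realize this as an actual K3 surface I would invoke the surjectivity of the period map together with the existence theorem for polarized K3 surfaces of any prescribed degree, producing a projective K3 surface $X$ with $\Pic(X)=\ZZ\cdot H$ and $H^2=2d$. The corresponding $2^{\omega(d)-1}\geq N$ non-isomorphic Fourier--Mukai partners $X_1,\dots,X_N$ then satisfy the conclusion by the initial reduction.

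The main technical obstacle is the explicit orbit count in the second step: one must keep track of how $\mathrm{O}(\Lambda_{K3})$ acts on primitive vectors of a given square and show that the orbits of the required overlattices are precisely indexed by unordered factorizations $d=a\cdot(d/a)$ with $\gcd(a,d/a)=1$. This is accomplished through Nikulin's discriminant-form machinery, relating overlattices of $\langle 2d\rangle\oplus T$ to isotropic subgroups of the discriminant groups; once this correspondence is set up, the multiplicativity of the count in the prime factorization of $d$ follows from the product decomposition of the discriminant group $\ZZ/2d\ZZ\iso\bigoplus_i\ZZ/p_i^{e_i}\ZZ$.
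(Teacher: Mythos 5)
The paper gives no proof of this proposition---it is quoted directly from \cite{Og} and \cite{St}---and your argument is a correct reconstruction of exactly the route those references (together with Hosono--Lian--Oguiso--Yau) take: reduce to Hodge isometries of transcendental lattices via Mukai--Orlov, count the partners of a Picard-rank-one K3 of degree $2d$ as $2^{\omega(d)-1}$ via Nikulin's discriminant-form machinery, and choose $d$ with enough distinct prime factors. No gaps; the only cosmetic remark is that Oguiso's own paper realizes the partners geometrically as moduli spaces $M_H(r,H,s)$ with $rs=d$, $\gcd(r,s)=1$, whereas the purely lattice-theoretic orbit count you describe is the one carried out in \cite{St} and by Hosono--Lian--Oguiso--Yau.
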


Two smooth projective varieties $X_1$ and $X_2$ such that $\Db(X_1)\iso\Db(X_2)$ are usually called \emph{Fourier--Mukai partners}. Notice that Proposition \ref{prop:BrMa} is a special instance of the following conjecture which is nothing but \cite[Conj.\ 1.5]{Ka1}.

\begin{conjecture}\label{conJ:finiteFMpartners}{\bf (Kawamata)}
	The number of Fourier--Mukai partners up to isomorphism of a smooth projective variety is finite.
\end{conjecture}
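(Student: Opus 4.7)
Since this statement is Kawamata's open conjecture, any proposal is inevitably partial; the plan is to combine the cohomological rigidity provided by Propositions \ref{prop:FM1}--\ref{prop:FM3} with boundedness results from moduli theory in order to confine the Fourier--Mukai partners $Y$ of a fixed smooth projective $X$ to finitely many isomorphism classes.

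First I would exploit every derived invariant at hand. Any FM partner $Y$ of $X$ shares with $X$ the same dimension (recovered, for instance, from the length of the Serre functor), the Hochschild homology and cohomology, and, through Serre functor arguments, the canonical ring together with all plurigenera, hence the Kodaira dimension as well. This already settles the two extremal cases: if $\omega_X$ or $\omega_X\dual$ is ample, then by Bondal--Orlov the existence of an equivalence $\Db(X)\iso\Db(Y)$ forces $X\iso Y$, so the set of partners has cardinality one. The substantive cases are therefore those of intermediate Kodaira dimension and, above all, trivial canonical class.

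Second, for the remaining cases the strategy is to realise every partner $Y$ as a moduli space of objects in $\Db(X)$. Given an equivalence $\fun{F}\colon\Db(Y)\isomor\Db(X)$, the images $\fun{F}(\ko_y)$ of skyscraper sheaves of points form a flat family of point-like objects in $\Db(X)$. By Proposition \ref{prop:FM1} and the preservation of the Mukai pairing (Proposition \ref{prop:FM3}), the Mukai vector of these objects is constrained to a discrete orbit depending only on $X$. Combined with the known boundedness of moduli of semistable complexes with fixed numerical class (via Langer's results on sheaves and their extension to complexes using Bridgeland stability), this shows that the partners $Y$ appear, as varieties, inside a finite union of quasi-projective moduli spaces built from $X$; in particular they form a bounded family.

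The main obstacle is the passage from boundedness to finiteness of isomorphism classes: one must exclude the possibility that a positive-dimensional component of such a moduli space parametrises a non-constant family of pairwise non-isomorphic partners. Heuristically this should follow from a rigidity statement, namely that any deformation of $Y$ that remains derived equivalent to the fixed $X$ corresponds, through the kernel, to a deformation of $X$ itself and is therefore trivial. Making this precise is tightly bound to the DK-hypothesis relating derived and $K$-equivalence and to finiteness of minimal models for Calabi--Yau fibre spaces, and establishing it in full generality seems to require substantial advances in minimal model theory beyond the presently available cases (abelian varieties by Orlov, K3 and Enriques surfaces via lattice theory as in Proposition \ref{prop:BrMa}, and threefolds under additional hypotheses).
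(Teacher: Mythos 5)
The statement you were asked to prove is presented in the paper as an \emph{open conjecture} (Kawamata's \cite[Conj.\ 1.5]{Ka1}); the paper contains no proof of it, only the remarks that it holds for K3 surfaces (Proposition \ref{prop:BrMa}, via the cohomological action of equivalences on the Mukai lattice and some lattice theory) and for abelian varieties (\cite{Or}), with further evidence in \cite{AT}. Your proposal correctly recognizes this and is candid about being a programme rather than a proof, so there is no paper argument to compare it against; but since your text will be read as a proof attempt, the gaps should be named precisely.

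There are two substantive ones beyond the general caveat you give. First, the middle step is not available in the stated generality: realizing an arbitrary Fourier--Mukai partner $Y$ of $X$ as a point of a moduli space of objects in $\Db(X)$, and bounding that moduli space, presupposes both the existence of Bridgeland stability conditions on $\Db(X)$ in which the point-like objects $\fun{F}(\ko_y)$ are stable and the boundedness of the corresponding moduli of complexes --- both of which are open for general smooth projective varieties of dimension at least three, so the claimed ``bounded family'' of partners is not established. Second, and more fundamentally, even granting boundedness the passage to finiteness is exactly where the conjecture lives: a bounded family may a priori contain a positive-dimensional subfamily of pairwise non-isomorphic partners, and the rigidity heuristic you invoke is not made rigorous by the tools in the paper. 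The induced isomorphism on $\HH^2$ (Section \ref{subsubsec:Hochschild}) identifies first-order deformations of $Y$ with first-order deformations of $X$ only in the generalized sense $\HH^2 \iso H^0(\wedge^2\kt)\oplus H^1(\kt)\oplus H^2(\ko)$, so a geometric deformation of $Y$ may correspond to a noncommutative or gerby deformation of the fixed $X$ rather than to the trivial one; ruling this out, or replacing it by the DK-hypothesis and finiteness of minimal models as you suggest, is precisely the open content of the conjecture. Your write-up is a reasonable reduction of the conjecture to other open problems, but it is not a proof, and no argument along these lines appears in the paper.
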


Abelian varieties satisfy this prediction as well (see \cite{Or}). In \cite{AT}, the authors provide further evidence for it.

\smallskip

To give one more important application of the discussion in this section, we can go back to the problem mentioned at the beginning of this section and use the structure of Fourier--Mukai functors to get a (partial) description of the group of autoequivalences of a K3 surface $X$. The following is the result of the papers \cite{HLOY,HMS,Or1}.

\begin{thm}\label{thm:HMS}
	For a K3 surface $X$, there exists a surjective morphism
	\[
	\Aut(\Db(X))\lto\mathrm{O}_+(\widetilde H(X,\ZZ))
	\]
	sending a Fourier--Mukai equivalence $\FM{\ke}$ to $\FMH{\ke}$.
\end{thm}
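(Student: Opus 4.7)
By Orlov's theorem (Theorem \ref{thm:Orlov}), every autoequivalence $\fun{F}\in\Aut(\Db(X))$ is isomorphic to $\FM{\ke}$ for some $\ke\in\Db(X\times X)$, and I would define the map by $\fun{F}\mapsto\FMH{\ke}$. Since naturally isomorphic functors induce the same map on $K$-theory, and the Mukai vector factors through $K$-theory, the cohomological realization $\FMH{\ke}$ depends only on the isomorphism class of $\fun{F}$ (one may also invoke Lemma \ref{lem:uniqcoh} here). Proposition \ref{prop:FMpropert}(ii) ensures that the composition of autoequivalences corresponds, up to isomorphism of kernels, to the convolution of Fourier--Mukai kernels, so the assignment is a group homomorphism. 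Propositions \ref{prop:FM1}, \ref{prop:FM2}, and \ref{prop:FM3} together show that $\FMH{\ke}$ is a rational Hodge isometry with respect to the Mukai pairing and the weight-two Hodge structure on $\widetilde H(X,\QQ)$; Mukai's integrality remark (quoted just before the statement of Proposition \ref{prop:BrMa}) then refines this to a Hodge isometry of the integral Mukai lattice, yielding a homomorphism $\Aut(\Db(X))\to\mathrm{O}(\widetilde H(X,\ZZ))$.

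\textbf{Orientation preservation.} The real Mukai lattice has signature $(4,20)$, and the positive-definite four-plane $P\subset\widetilde H(X,\RR)$ carries a canonical orientation: a positively oriented basis is given by $\mathrm{Re}(\sigma)$ and $\mathrm{Im}(\sigma)$ (for $\sigma$ spanning $H^{2,0}(X)$) together with two classes in $(H^0\oplus H^{1,1}_{\RR}\oplus H^4)(X)$ built from the exponential Mukai vector of a K\"ahler class. I would show that $\FMH{\ke}$ always preserves the orientation of $P$. The cleanest approach, following Huybrechts--Macr\`\i--Stellari, is to pass through Bridgeland's space of stability conditions: a distinguished connected component $\mathrm{Stab}^*(X)$ admits a continuous map to the oriented positive cone, every autoequivalence acts on $\mathrm{Stab}^*(X)$ compatibly with this map, and this forces orientation preservation. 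Alternatively one may verify the claim for a generating set of autoequivalences (shifts, tensoring by line bundles, pullback by $\Aut(X)$, spherical twists, and the Fourier--Mukai equivalences of Step 3) and propagate it.

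\textbf{Surjectivity.} To surject onto $\mathrm{O}_+(\widetilde H(X,\ZZ))$, I would display enough geometric autoequivalences of $\Db(X)$ so that their cohomological images generate the target. The generators split into three families: (i) the obvious autoequivalences coming from shifts, twists by line bundles, and pullback by $\Aut(X)$, realizing Hodge isometries trivial on the transcendental lattice; (ii) spherical twists $T_E$ along spherical objects $E$, whose cohomological effect is the reflection in $v(E)$, producing every reflection in a $(-2)$-class once one guarantees the existence of a spherical object with prescribed Mukai vector (via moduli of simple sheaves on the K3 surface); (iii) Fourier--Mukai equivalences $\Db(M_v(X))\isomor\Db(X)$ arising from universal families on fine two-dimensional moduli spaces of stable sheaves with isotropic Mukai vector, which relate $X$ to its Fourier--Mukai partners and realize the remaining Hodge isometries. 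A standard lattice-theoretic input for the Mukai lattice of a K3 surface (generation of $\mathrm{O}_+$ by reflections in $(-2)$-classes and by isometries induced from isotropic Mukai vectors) then completes the surjectivity.

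\textbf{Main obstacle.} The hardest step is Step 3, intertwined with the orientation restriction of Step 2: one must show that every orientation-preserving Hodge isometry is realized by an autoequivalence and that no orientation-reversing isometry occurs in the image. This rests on two delicate inputs. First, the existence result for fine two-dimensional moduli spaces of stable sheaves with arbitrary prescribed isotropic Mukai vector on a K3 surface, which is a deep theorem requiring non-trivial polarization and deformation arguments. Second, the precise sign analysis for the generating autoequivalences, which is what pins the image down to $\mathrm{O}_+$ rather than to a proper subgroup or to the whole of $\mathrm{O}(\widetilde H(X,\ZZ))$.
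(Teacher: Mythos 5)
The paper does not actually prove Theorem \ref{thm:HMS}: it is quoted as a result of \cite{HLOY,HMS,Or1}, so there is no internal argument to compare yours against. Measured against those references, your outline has the right overall shape. Well-definedness and the homomorphism property follow exactly as you say from Theorem \ref{thm:Orlov}, Lemma \ref{lem:uniqcoh}/Corollary \ref{cor:Kcohom}, Propositions \ref{prop:FM1}--\ref{prop:FM3} and Mukai's integrality remark; and surjectivity onto $\mathrm{O}_+(\widetilde H(X,\ZZ))$ is indeed obtained in \cite{Or1,HLOY} by realizing enough isometries through spherical twists, line-bundle twists, automorphisms and moduli-space equivalences, combined with lattice theory.

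The genuine gap is in your treatment of orientation preservation, which is the deep half of the theorem (the content of \cite{HMS}). Neither of your two proposed routes closes as stated. The stability-conditions argument requires knowing that every autoequivalence preserves the distinguished connected component $\mathrm{Stab}^{\dagger}(X)$; this is part of Bridgeland's conjecture and is still open, so it cannot be used to prove the theorem. The alternative of verifying the sign on a generating set presupposes that shifts, line-bundle twists, pullbacks, spherical twists and moduli-space equivalences generate $\Aut(\Db(X))$, which is likewise unknown. The actual proof in \cite{HMS} proceeds differently: one deforms the Fourier--Mukai kernel along (formal, then generic twisted) deformations of the K3 surface, using the compatible actions on Hochschild homology and cohomology --- precisely the machinery recalled in Section \ref{subsubsec:Hochschild} of this survey, where the application to \cite{HMS} is pointed out --- and reduces the orientation question to generic twisted K3 surfaces, where all equivalences can be classified directly. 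Your identification of this step as the main obstacle is correct, but the argument you sketch for it would not go through.
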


Here $\mathrm{O}_+(\widetilde H(X,\ZZ))$ is the group of Hodge isometries of the Mukai lattice preserving the orientation of some $4$-dimensional (real) vector subspace of $H^*(X,\RR)$.

\subsection{Hochschild homology, cohomology and deformations}\label{subsubsec:Hochschild}

For many geometric purposes, the cohomology theory one may want to
consider is Hochschild cohomology (and homology). More precisely, assume that a Fourier--Mukai equivalence $\FM{\ke}\colon\Db(X_1)\to\Db(X_2)$ between the bounded derived categories of the smooth complex projective varieties $X_1$ and $X_2$ is given. Then one may want to study (first order) deformations of $X_i$ compatible with deformations of the Fourier--Mukai kernel $\ke\in\Db(X_1\times X_2)$. To this end, we indeed have to study Hochschild cohomology and homology and the corresponding actions of $\FM{\ke}$.

\medskip

If $X$ is a smooth projective variety and $\omega_X$ is its dualizing
sheaf, we define $S_X$ as in Example \ref{ex:FM},
$S^{-1}_X:=\omega\dual_X[-\dim(X)]$ and $S^{\pm
  1}_{\Delta}:=(\Delta)_*S^{\pm 1}_X$, where $\Delta\colon X\mono X\times X$ is the diagonal embedding. The \emph{$i$-th Hochschild homology and cohomology groups}, $i\in\ZZ$, are respectively (see, for example, \cite{Cal1})
\[
\begin{split}
\HH_i(X)&:=\Hom_{\Db(X\times X)}(S^{-1}_{\Delta}[i],\ko_{\Delta})\iso\Hom_{\Db(X)}(\ko_X[i],\Delta^*\ko_{\Delta})\\
\HH^i(X)&:=\Hom_{\Db(X\times X)}(\ko_{\Delta},\ko_{\Delta}[i])\iso\Hom_{\Db(X)}(\Delta^*\ko_{\Delta},\ko_X[i]).
\end{split}
\]
Set $\HH_*(X):=\bigoplus_i\HH_i(X)$ and $\HH^*(X):=\bigoplus_i\HH^i(X)$. The \emph{Hochschild--Kostant--Rosenberg isomorphisms} are graded isomorphisms
\[
\begin{split}
I^X_\mathrm{HKR}\colon&\HH_*(X)\to\HO_*(X):=\bigoplus_i\HO_i(X)\\
I_X^\mathrm{HKR}\colon&\HH^*(X)\to\HT^*(X):=\bigoplus_i\HT^i(X),
\end{split}
\]
where $\HO_i(X):=\bigoplus_{q-p=i}H^p(X,\Omega_X^q)$ and $\HT^i(X):=\bigoplus_{p+q=i}H^p(X,\wedge^q\kt_X)$. One then defines the graded isomorphisms
\[
\begin{split}
&I^X_K=(\mathrm{td}(X)^{1/2}\wedge(-))\comp I^X_\mathrm{HKR}\\
&I_X^K=(\mathrm{td}(X)^{-1/2}\lrcorner(-))\comp I_X^\mathrm{HKR}.
\end{split}
\]

From \cite{Cal2,Cal1}, we get a functorial graded morphism $(\FM{\ke})_\HH\colon\HH_*(X_1)\to\HH_*(X_2)$. The following shows the compatibility between this action and the one described in Section \ref{subsubsec:cohomo}. It is based on \cite{Mark}.

\begin{thm}\label{thm:MSHoch}{\bf(\cite{MSHoch}, Theorem 1.2.)}
    Let $X_1$ and $X_2$ be smooth complex projective varieties and let $\ke\in\Db(X_1\times X_2)$.
    Then the following diagram
    \begin{equation*}
        \xymatrix{\HH_*(X_1)\ar[rr]^{(\FM{\ke})_\HH}\ar[d]_{I_K^{X_1}}&&\HH_*(X_2)\ar[d]^{I_K^{X_2}}\\ H^*(X_1,\CC)\ar[rr]^{\FMH{\ke}}&&H^*(X_2,\CC)}
    \end{equation*}
    commutes.
\end{thm}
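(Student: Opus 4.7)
My plan is to follow the strategy developed by Markman \cite{Mark}, suitably adapted from the case of a single Chern character to that of a general Fourier--Mukai kernel. The starting point is C\u{a}ld\u{a}raru's explicit description of the action $(\FM{\ke})_\HH$ on Hochschild homology as the composition of an ``HH-pullback'' along $p_1$, multiplication by the Hochschild Chern character $\mathrm{ch}_\HH(\ke)$ of the kernel, and an ``HH-pushforward'' along $p_2$. The aim is then to transport this composition through the vertical maps $I_K^{X_i}$ and check that it agrees with the cohomological formula $b\mapsto (p_2)_*(v([\ke])\cdot p_1^*(b))$ defining $\FMH{\ke}$.

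To do so I would invoke three compatibilities between $I_\mathrm{HKR}$ and the natural operations on smooth projective varieties: (i) HKR commutes strictly with flat pullback; (ii) HKR commutes with proper pushforward only up to multiplication by the relative Todd class, i.e.\ $I^Y_\mathrm{HKR}(f_*\alpha) = f_*\bigl(I^X_\mathrm{HKR}(\alpha)\cdot\mathrm{td}(T_f)\bigr)$ for $f\colon X\to Y$ proper, which is a categorified Grothendieck--Riemann--Roch statement; and (iii) under $I_\mathrm{HKR}$ the Hochschild Chern character of a perfect complex coincides with its ordinary Chern character. All three are available in the generality required here thanks to \cite{Mark} and the companion literature it relies on.

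Given (i)--(iii) the diagram reduces to a direct computation. Transporting $(\FM{\ke})_\HH$ through $I_\mathrm{HKR}^{X_2}$ produces the map
\[
b\longmapsto (p_2)_*\bigl(\mathrm{ch}(\ke)\cdot \mathrm{td}(T_{p_2})\cdot p_1^*(b)\bigr),
\]
where $b=I_\mathrm{HKR}^{X_1}(\alpha)$ for $\alpha\in\HH_*(X_1)$. Upgrading from $I_\mathrm{HKR}$ to $I_K$ on source and target inserts the factors $\sqrt{\mathrm{td}(X_1)}^{-1}$ and $\sqrt{\mathrm{td}(X_2)}$ respectively. Using the projection formula together with $\mathrm{td}(T_{p_2}) = p_1^*\mathrm{td}(X_1)$ and $\sqrt{\mathrm{td}(X_1\times X_2)} = p_1^*\sqrt{\mathrm{td}(X_1)}\cdot p_2^*\sqrt{\mathrm{td}(X_2)}$, I would reorganize the Todd contributions into the single factor $\mathrm{ch}(\ke)\cdot\sqrt{\mathrm{td}(X_1\times X_2)} = v([\ke])$ sitting next to $p_1^*(-)$ inside $(p_2)_*$, which is exactly $\FMH{\ke}$.

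The hard part is step (ii): HKR is constructed locally from the de Rham and polyvector complexes, whereas proper pushforward is a global operation, so pinning down the precise Todd correction demands a categorified Grothendieck--Riemann--Roch. This is the technical heart of \cite{MSHoch} and of the preceding work of \cite{Mark} on which it builds; once it is in place, the remainder is Todd-class bookkeeping, and the very definition $I_K = \sqrt{\mathrm{td}}\wedge I_\mathrm{HKR}$ is designed precisely to produce the symmetric cancellation of Todd corrections that matches $\FMH{\ke}$.
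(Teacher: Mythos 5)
The survey does not prove this theorem: it is quoted verbatim from \cite{MSHoch} (Theorem 1.2 there), with only the remark that the proof ``is based on \cite{Mark}''. So there is no in-paper argument to compare yours against; what can be said is that your outline is the standard strategy and does match the one followed in the cited source. Your Todd-class bookkeeping is also correct as stated: writing $\mathrm{td}(T_{p_2})=p_1^*\mathrm{td}(X_1)$ and inserting $\mathrm{td}(X_1)^{-1/2}$ and $\mathrm{td}(X_2)^{1/2}$ from the two copies of $I_K$, the projection formula does reassemble everything into $(p_2)_*\bigl(v([\ke])\cdot p_1^*(-)\bigr)=\FMH{\ke}$, and this cancellation is indeed the raison d'\^etre of the $\sqrt{\mathrm{td}}$ twist in $I_K$.

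The gap is that your items (i)--(iii), together with a fourth compatibility you use silently, \emph{are} the theorem. Item (ii) --- that $I_{\mathrm{HKR}}$ intertwines the Hochschild and cohomological proper pushforwards up to $\mathrm{td}(T_f)$ --- is the categorified Grothendieck--Riemann--Roch statement, and at the level of generality needed here it is precisely the technical content of \cite{Mark} and \cite{MSHoch}, not a black box one can cite independently of them; you acknowledge this, but it means the proposal is a reduction of the theorem to itself rather than a proof. Moreover, in the displayed computation you also move ``multiplication by $\mathrm{ch}_{\HH}(\ke)$'' across $I_{\mathrm{HKR}}$ using only (i) and (iii); this tacitly assumes that $I_{\mathrm{HKR}}$ is compatible with the relevant multiplicative/module structure on Hochschild (co)homology, which is known to fail for the naive HKR map without a Todd-type correction (this is the Kontsevich--Calaque--Van den Bergh phenomenon) and is again part of what must actually be proved. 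To make the argument self-contained you would need to either establish the Hochschild Riemann--Roch statement directly, or follow \cite{MSHoch} in reducing the commutativity of the square to Markman's explicit computation of the HKR image of the Hochschild Chern character and to C\u{a}ld\u{a}raru--Willerton's functoriality of the push--pull operations on $\HH_*$.
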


If $\FM{\ke}$ is an equivalence, then
there exists also an action $(\FM{\ke})^\HH$ on Hochschild
cohomology induced by the functor
$\FM{\ke\boxtimes\kp}\colon\Db(X_1\times X_1)\to\Db(X_2\times X_2)$,
where $\kp\iso\ke_L\iso\ke_R$ is the kernel of the inverse of
$\FM{\ke}$, which sends $\ko_{\Delta_{X_1}}$ to $\ko_{\Delta_{X_2}}$
(see, for example, \cite[Remark 6.3]{H}).

Now the second Hochschild cohomolgy group controls first order deformations of a smooth projective variety. Hence, given a Fourier--Mukai equivalence $\FM{\ke}\colon\Db(X_1)\to\Db(X_2)$ and combining the actions $(\FM{\ke})_\HH$, $(\FM{\ke})^\HH$ and Theorem \ref{thm:MSHoch}, one can control first order deformations of $X_1$ and $X_2$ compatible with deformations of the Fourier--Mukai functor $\FM{\ke}$. This was done, for example, in \cite{HMS}.

Interesting recent developments are contained in \cite{ABP}, where the authors deal with fully faithful Fourier--Mukai functors whose kernel is a (shift of a) sheaf.

\section{The main problems and the first improvements}\label{sec:problems}

In this section we list the main problems that we want to address. The answers to them which are available in the literature will be presented in Section \ref{sec:solutions}. For the moment we content ourselves with a discussion of a celebrated result of Orlov about Fourier--Mukai functors. Various generalizations or attempts to weaken the hypotheses in this result are discussed in this section as well.

\subsection{The questions}\label{subsec:questions}

Assume for the moment that all the varieties are smooth and projective. The most important problems concerning Fourier--Mukai functors may be summarized by the following two questions:

\medskip
\begin{itemize}
\item[(1)] {\it Are all exact functors between the bounded derived categories of coherent sheaves on smooth projective varieties of Fourier--Mukai type?}
\medskip
\item[(2)] {\it Is the kernel of a Fourier--Mukai functor unique (up to isomorphism)?}
\end{itemize}
\medskip
A positive answer to the first one was conjectured in \cite{BLL} as a consequence of a conjecture about the possibility to lift all exact funtors to the corresponding dg-enhancements. In these terms, a positive or negative answer to the second one implies the uniqueness or non-uniqueness of such dg-lifts.

\medskip

We can now put these questions in a more general setting. Indeed, consider the category
$\ExFun(\Db(X_1),\Db(X_2))$ of exact functors
between $\Db(X_1)$ and $\Db(X_2)$ (with morphisms the natural
transformations compatible with shifts) and define the functor
\begin{equation}\label{eqn:fun}
\FM[X_1\to X_2]{\farg}\colon\Db(X_1\times X_2)\lto\ExFun(\Db(X_1),\Db(X_2))
\end{equation}
by sending $\ke\in\Db(X_1\times X_2)$ to the Fourier--Mukai functor $\FM{\ke}$. Thus we can formulate the following problems:
\medskip
\begin{itemize}
\item[(Q1)] {\it Is $\FM[X_1\to X_2]{\farg}$ essentially surjective?}
\medskip
\item[(Q2)] {\it Is $\FM[X_1\to X_2]{\farg}$ essentially injective?}
\medskip
\item[(Q3)] {\it Is $\FM[X_1\to X_2]{\farg}$ faithful?}
\medskip
\item[(Q4)] {\it Is $\FM[X_1\to X_2]{\farg}$ full?}
\medskip
\item[(Q5)] {\it Does $\ExFun(\Db(X_1),\Db(X_2))$ have a triangulated structure making $\FM[X_1\to X_2]{\farg}$ exact?}
\end{itemize}
\medskip
Clearly, (Q1) and (Q2) are precisely (1) and (2), respectively. C\u{a}ld\u{a}raru provided a negative answer to (Q3) in \cite[Example 6.5]{C} (see also \cite{Ploog}), while a negative answer to (Q5) was expected already in \cite{BLL, To}. Nevertheless, in the seminal paper \cite{Or1} a positive answer to (1) and (2) has been provided under some additional assumption on the exact functor. In the original formulation, it can be stated as follows:

\begin{thm}\label{thm:Orlov}{\bf (Orlov)}
	Let $X_1$ and $X_2$ be smooth projective varieties and let $\fun{F}\colon\Db(X_1)\to\Db(X_2)$ be an exact fully faithful functor admitting a left adjoint. Then there exists a unique (up to isomorphim) $\ke\in\Db(X_1\times X_2)$ such that $\fun{F}\iso\FM{\ke}$.	
\end{thm}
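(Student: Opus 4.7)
The plan is to reconstruct $\ke$ from the action of $\fun{F}$ on a generating family of test objects, using a resolution of the diagonal on $X_1\times X_1$ as the bridge. First I would fix an ample line bundle $L$ on $X_1$ and consider the sequence $\{L^{\otimes -i}\}_{i\gg 0}$, which forms an \emph{ample sequence} in the sense of Bondal--Orlov and in particular spans $\Db(X_1)$; for $i\gg 0$ one has the crucial vanishing $\Hom(L^{\otimes -i},\kg[k])=0$ for all $k\neq 0$ and any fixed $\kg\in\Db(X_1)$. Full faithfulness of $\fun{F}$ transports these vanishings to the images $\fun{F}(L^{\otimes -i})$, while the hypothesized left adjoint of $\fun{F}$ allows one to move $\Hom$-computations freely between source and target.

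For existence, I would assemble the candidate kernel through a Beilinson-type d\'evissage of the diagonal. For each $N$ one produces a bounded complex on $X_1\times X_2$ built from the pieces $\fun{F}(L^{\otimes -i_j})$ which mimics an $N$-truncated resolution of $\ko_\Delta$ on $X_1\times X_1$, and the ampleness vanishings ensure that the connecting data between successive truncations lie in $\Ext$-groups that become trivial for $N$ large. A genuine object $\ke\in\Db(X_1\times X_2)$ thus emerges in the limit, and by construction $\FM{\ke}(L^{\otimes -i})\cong \fun{F}(L^{\otimes -i})$ on the ample sequence; since both sides admit left adjoints and the ample sequence generates $\Db(X_1)$, this isomorphism then propagates to all of $\Db(X_1)$.

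For uniqueness, if $\ke_1$ and $\ke_2$ both represent $\fun{F}$, evaluating at skyscraper sheaves $\ko_x$ already shows $\ke_i|_{\{x\}\times X_2}\cong\fun{F}(\ko_x)$ for every closed $x$, so the fibres agree pointwise. Running the same d\'evissage argument on the natural transformation $\FM{\ke_1}\cong\fun{F}\cong\FM{\ke_2}$ lifts it to an actual morphism $\ke_1\to\ke_2$ in $\Db(X_1\times X_2)$, which is an isomorphism by the pointwise identification.

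The main obstacle is the existence step: the gluing data needed to assemble the truncated pieces into an actual object of $\Db(X_1\times X_2)$ do not lift a priori, and showing that the relevant obstruction classes vanish is the technical heart of the argument. This is precisely where the fully faithful hypothesis together with the existence of a left adjoint become indispensable, as together they produce the $\Hom$-identifications that force the obstruction $\Ext$-groups to be zero; without them the d\'evissage breaks down, and one is forced to pass to dg-enhancements to salvage the construction, as will be discussed in later sections of the paper.
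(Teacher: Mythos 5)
The paper does not actually prove this theorem: it is quoted from Orlov's paper \cite{Or1}, and only the ingredients of the (more general) Lunts--Orlov argument are sketched in Section \ref{subsec:ingredients}. Your outline does follow the route of Orlov's original proof --- an ample sequence $\{L^{\otimes -i}\}$, a Beilinson-type resolution of the diagonal transported through $\fun{F}$, and a convolution (Postnikov system) whose existence and uniqueness are forced by the $\Hom$-vanishings coming from full faithfulness --- and you correctly locate the technical heart in the vanishing of the obstruction groups for assembling the convolution. Two small corrections: the vanishing $\Hom(L^{\otimes -i},\kg[k])=0$ for $k\neq 0$ and $i\gg 0$ holds for $\kg\in\Coh(X_1)$, not for arbitrary $\kg\in\Db(X_1)$ (take $\kg=\ko_{X_1}\oplus\ko_{X_1}[1]$); and the dévissage is performed on the resolution of $\ko_\Delta$ restricted from $\PP^N\times\PP^N$, with the differentials transported to $X_1\times X_2$ via the Künneth formula and the isomorphisms $\Hom(A,B)\iso\Hom(\fun{F}(A),\fun{F}(B))$.

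There are, however, two places where your sketch glosses over steps that are genuinely delicate and that the survey itself singles out as the hard points. First, the claim that an isomorphism $\FM{\ke}(L^{\otimes -i})\iso\fun{F}(L^{\otimes -i})$ on the ample sequence ``propagates'' to an isomorphism of functors is not automatic in a triangulated category: one must extend a natural isomorphism first to $\Coh(X_1)$ via convolutions and then to $\Db(X_1)$ by induction on the length of the cohomological amplitude (the steps $\theta_1$, $\theta_2$, $\theta_3$ of Section \ref{subsec:ingredients}); Lemma \ref{lem:isoobj} shows that even objectwise isomorphisms on all of $\Db(X_1)$ need not assemble into an isomorphism of functors. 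Second, your uniqueness argument lifts the natural isomorphism $\FM{\ke_1}\iso\FM{\ke_2}$ to a morphism $\ke_1\to\ke_2$; this amounts to fullness of $\FM[X_1\to X_2]{\farg}$, which fails in general (Proposition \ref{nonfaith}), and pointwise agreement of the fibres $\fun{F}(\ko_x)$ is also insufficient, as the elliptic-curve example of Theorem \ref{thm:nonuniq} shows. The correct uniqueness argument is the one illustrated in Lemma \ref{lem:uniqPP1}: \emph{any} kernel representing $\fun{F}$ is forced to be a convolution of the transported resolution complex, and under the $\Hom$-vanishings guaranteed by full faithfulness such a convolution is unique up to isomorphism.
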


A generalization to smooth stacks (actually obtained as global quotients) is contained in \cite{Ka}. In the rest of this section and as a preparation for a complete discussion of (Q1)--(Q5) that will be carried out in Sections \ref{sec:solutions} and \ref{sec:existence}, we start discussing how one may try to weaken the hypotheses of the above result.

\subsection{Existence of adjoints}\label{subsubsec:adj}

Of course, in purely categorical terms, the existence of adjoints to a given functor is not automatic. In this section we will see a first approach, due to Bondal and Van den Bergh, to make this straightforward in the geometric setting we are dealing with.

\medskip

Let us start from the more general setting where $\cat{T}$ is an $\Ext$-finite triangulated category. This means that $\sum_n\dim_{\K}\Hom(A,B[n])<\infty$, for all $A,B\in\cat{T}$. Denote by $\cat{Vect}\text{-}\K$ the category of $\K$-vector spaces. A contravariant functor $\fun{H}\colon\cat{T}\to\cat{Vect}\text{-}\K$ is \emph{cohomological} if, given a distinguished triangle
\[
A\lto B\lto C
\]
in $\cat{T}$, the sequence
\[
\fun{H}(C)\lto\fun{H}(B)\lto\fun{H}(A)
\]
is exact in $\cat{Vect}\text{-}\K$. A cohomological contravariant functor $\fun{H}$ is \emph{of finite type} if $\dim_\K\bigoplus_i\fun{H}(A[i])<\infty$, for all $A\in\cat{T}$.

\begin{definition}\label{def:saturation}
	The triangulated category $\cat{T}$ is \emph{(right) saturated} if every cohomological contravariant functor $\fun{H}$ of finite type is representable, i.e.\ there exists $A\in\cat{T}$ and an isomorphism of functors
	\[
	\fun{H}\iso\Hom(-,A).
	\]
\end{definition}

\begin{remark}\label{rmk:nonsmooth}
(i) By the Yoneda Lemma, if a cohomological functor $\fun{H}$ is representable, then the object representing it is unique (up to isomorphism).  	 

(ii) In \cite{BB}, the authors provide examples of `geometric' categories which are not saturated. Namely, if $X$ is a smooth compact complex surface containing no compact curves, then $\Db(X)$ is not saturated. Examples in higher dimensions are given in \cite{Ogui}.
\end{remark}

In the smooth proper case one has the following result.

\begin{thm}\label{thm:BB}{\bf(\cite{BB}, Theorem 1.1.)}
	Assume that $X$ is a smooth proper scheme over $\K$. Then $\Db(X)$ is saturated.
\end{thm}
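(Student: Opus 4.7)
The strategy I would adopt is the classical one: produce a representing object in a larger ambient triangulated category where Brown representability is available, and then use smoothness and properness of $X$ to force the representing object back into $\Db(X)$.

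First I would exploit that for $X$ smooth the inclusion $\Db(X)\mono\D(\Qcoh(X))$ identifies $\Db(X)$ with the subcategory $\D(\Qcoh(X))^c$ of compact objects; smoothness is what ensures $\Db(\Coh(X))=\mathrm{Perf}(X)$, via the existence of finite locally free resolutions, while $\D(\Qcoh(X))$ is compactly generated (e.g.\ by powers of an ample line bundle). Given the cohomological contravariant functor $\fun{H}\colon\Db(X)\to\cat{Vect}\text{-}\K$ of finite type, I would then extend it to a cohomological functor $\widetilde{\fun{H}}\colon\D(\Qcoh(X))^{\mathrm{op}}\to\cat{Ab}$ that turns coproducts into products: every object of $\D(\Qcoh(X))$ is a homotopy colimit of compact objects $A_n$, and one sets $\widetilde{\fun{H}}(\mathrm{hocolim}\,A_n):=\varprojlim\fun{H}(A_n)$, checking independence of the presentation.

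Neeman's Brown representability theorem then yields $A\in\D(\Qcoh(X))$ with $\widetilde{\fun{H}}\iso\Hom(-,A)$, and uniqueness of $A$ up to isomorphism follows immediately from the Yoneda Lemma (cf.\ Remark \ref{rmk:nonsmooth}(i)). The decisive remaining point is to show that $A$ actually lies in $\Db(X)$, i.e.\ that $A$ is compact. Here I would use the existence of a \emph{strong} (classical) generator $G$ of $\Db(X)$, which for smooth proper $X$ is a companion result to the statement under consideration. With such a $G$ at hand, the finite type hypothesis translates into
\[
\sum_{i}\dim_\K\Hom(G[i],A)\;=\;\sum_{i}\dim_\K\fun{H}(G[i])\;<\;\infty,
\]
so $A$ has bounded and finite-dimensional $\Hom$-degrees from $G$. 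A standard d\'evissage, exploiting the fact that $G$ generates $\Db(X)$ uniformly (in a bounded number of cones and shifts), then places $A$ in the thick subcategory classically generated by $G$, which is $\Db(X)$ itself.

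The main obstacle is precisely this last step: transferring abstract representability across the inclusion $\Db(X)\mono\D(\Qcoh(X))$. It is here that all the geometric input enters in an essential way. Smoothness is what identifies $\Db(X)$ with $\mathrm{Perf}(X)$, so that ``compact'' is the correct finiteness notion; properness provides $\Hom$-finiteness, which is what makes the finite type assumption on $\fun{H}$ a meaningful constraint rather than a vacuous one; and the strong generator, built from both properties, is the lever that converts ``finite on the single test object $G$'' into ``represented by an object of $\Db(X)$''. Remark \ref{rmk:nonsmooth}(ii) shows that both hypotheses are needed: without properness one can produce smooth compact $X$ for which $\Db(X)$ is not saturated.
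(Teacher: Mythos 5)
First, a caveat: the paper does not actually prove this statement --- it is quoted from Bondal--Van den Bergh \cite{BB} without proof --- so the comparison below is with the argument of \cite{BB} itself. Your overall plan (represent the functor in a large ambient category, then force the representing object down into $\Db(X)$) is a recognized alternative to theirs, but as written it has two genuine gaps, and they sit exactly at the two hard points. The first is the extension step. The assignment $\widetilde{\fun{H}}(\mathrm{hocolim}\,A_n):=\varprojlim\fun{H}(A_n)$ does not define a functor: homotopy colimits are not functorial, $\varprojlim$ is not exact (so the would-be extension need not be cohomological), and ``independence of the presentation'' is precisely the ${\varprojlim}^1$ obstruction, which cannot be checked in general. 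Indeed, for a general compactly generated triangulated category it is false that every cohomological functor on the compact objects is the restriction of a representable one --- this is the failure of Brown--Adams representability exhibited by Christensen--Keller--Neeman. Any correct version of this step must already exploit the finite-type hypothesis on $\fun{H}$, which your outline only invokes later.

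The second gap is the final d\'evissage. From $\sum_i\dim_\K\Hom(G[i],A)<\infty$ for a strong generator $G$ you conclude that $A$ lies in the thick subcategory classically generated by $G$ and call this ``standard''; it is not --- it is the actual content of the theorem. An object of $\D(\Qcoh(X))$ with finite-dimensional total $\Hom$ from a classical generator of $\Dp(X)$ is not obviously perfect, and the induction ``in a bounded number of cones'' has to be performed on the functor, not on an object already known to exist. This is why Bondal--Van den Bergh never pass through $\D(\Qcoh(X))$: their key abstract result is intrinsic (a Karoubian, $\Ext$-finite triangulated category admitting a strong generator is right saturated), proved by approximating $\fun{H}$ by representable functors $\Hom(-,A_i)$ and using the quantitative form of strong generation to show that the tower of approximations stabilizes after finitely many steps. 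The geometric input (smoothness gives $\Db(X)=\Dp(X)$ and a strong generator, properness gives $\Ext$-finiteness, consistently with Remark \ref{rmk:nonsmooth}(ii)) then enters exactly where you say it should; so your diagnosis of where the geometry is used is correct, but the two steps you label as routine are the theorem.
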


Now assume that $X_1$ and $X_2$ are smooth proper schemes. As an application of the above theorem, we get the following well-known result.

\begin{prop}\label{prop:exadj}
Any exact functor $\fun{F}\colon\Db(X_1)\to\Db(X_2)$ has left and right adjoints.
\end{prop}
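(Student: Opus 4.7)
The plan is to construct a right adjoint $\fun{F}^R$ to $\fun{F}$ directly via saturation, and then deduce the existence of a left adjoint by conjugating $\fun{F}^R$ with the Serre functors of $\Db(X_1)$ and $\Db(X_2)$ (which are available because $X_1$ and $X_2$ are smooth and proper, cf.\ Example~\ref{ex:FM}(i)).

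First I would fix $B\in\Db(X_2)$ and consider the contravariant functor
\[
\fun{H}_B\colon\Db(X_1)\lto\cat{Vect}\text{-}\K,\qquad A\longmapsto\Hom_{\Db(X_2)}(\fun{F}(A),B).
\]
Since $\fun{F}$ is exact and $\Hom_{\Db(X_2)}(\farg,B)$ is cohomological, $\fun{H}_B$ is itself cohomological. To verify that it is of finite type, observe that
\[
\bigoplus_i\fun{H}_B(A[i])\iso\bigoplus_i\Hom_{\Db(X_2)}(\fun{F}(A)[i],B),
\]
and this is finite dimensional over $\K$ because $\Db(X_2)$ is $\Ext$-finite, which is a standard consequence of smoothness and properness of $X_2$. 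Therefore Theorem~\ref{thm:BB}, applied to $X_1$, produces an object $\fun{F}^R(B)\in\Db(X_1)$ together with a natural isomorphism $\Hom_{\Db(X_2)}(\fun{F}(\farg),B)\iso\Hom_{\Db(X_1)}(\farg,\fun{F}^R(B))$, and by Remark~\ref{rmk:nonsmooth}(i) the representing object is unique up to canonical isomorphism.

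The assignment $B\mapsto\fun{F}^R(B)$ is then promoted to a functor by the usual Yoneda argument: a morphism $B\to B'$ induces a natural transformation between the corresponding representable functors on $\Db(X_1)$, and hence a unique morphism $\fun{F}^R(B)\to\fun{F}^R(B')$; compatibility with shifts and distinguished triangles is automatic, since a right adjoint of an exact functor between triangulated categories is itself exact. To obtain the left adjoint I would chain Serre duality isomorphisms
\[
\Hom(B,\fun{F}(A))\iso\Hom(\fun{F}(A),\fun{S}_{X_2}(B))^{\vee}\iso\Hom(A,\fun{F}^R(\fun{S}_{X_2}(B)))^{\vee}\iso\Hom(\fun{S}_{X_1}^{-1}(\fun{F}^R(\fun{S}_{X_2}(B))),A),
\]
which displays $\fun{F}^L:=\fun{S}_{X_1}^{-1}\comp\fun{F}^R\comp\fun{S}_{X_2}$ as a left adjoint to $\fun{F}$.

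The main obstacle, such as it is, lies in the finite-type verification for $\fun{H}_B$: it relies on $\Ext$-finiteness of $\Db(X_2)$, which in turn uses the smoothness and properness of $X_2$. Everything else is a routine application of the saturation property (Theorem~\ref{thm:BB}) combined with the presence of Serre functors on both sides.
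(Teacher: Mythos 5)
Your proposal is correct and follows essentially the same route as the paper: represent $\Hom(\fun{F}(-),B)$ via the saturation of $\Db(X_1)$ (Theorem~\ref{thm:BB}), promote the assignment to a right adjoint by Yoneda, and obtain the left adjoint by conjugating with the Serre functors. You merely make explicit the finite-type check and the Serre-duality chain that the paper leaves as an exercise.
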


\begin{proof}
For any $\kf\in\Db(X_2)$ the functor $\Hom(\fun{F}(-),\kf)$
	is representable by a unique $\ke\in\Db(X_1)$ due to Theorem \ref{thm:BB}. Setting
	$\fun{G}(\kf):=\ke$, by the Yoneda Lemma we get a functor $\fun{G}\colon\Db(X_2)\to\Db(X_1)$ which is right
	adjoint to $\fun{F}$. Since $\Db(X_1)$ and $\Db(X_2)$ have Serre
	functors, it is a very easy exercise to prove that $\fun{F}$ has also a left adjoint.
\end{proof}

Observe that, due to \cite[Prop.\ 1.4]{BK}, the right and left adjoints in the above statement are automatically exact.

\subsection{The algebricity assumption}\label{subsec:projectivity}

In this section we show in which sense it is important to work with algebraic varieties. In particular, we give examples of exact functors between the bounded derived categories of coherent sheaves on smooth compact complex manifolds which are not of Fourier--Mukai type.

\medskip

For this, let $X$ be a generic non-projective K3 surface. With this we mean a K3 surface $X$ such that $\Pic(X)=0$. The following surprising result shows that the abelian categories of coherent sheaves on those surfaces are not fine invariants (see, for example, \cite{MS} for a brief account about coherent sheaves and Chern characters in this setting).

\begin{thm}\label{thm:Verb}{\bf (\cite{V4})}
	Let $X_1$ and $X_2$ be generic non-projective K3 surfaces. Then there exists an equivalence of abelian categories $\Coh(X_1)\iso\Coh(X_2)$.
\end{thm}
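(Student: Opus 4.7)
The plan is to prove Theorem \ref{thm:Verb} by establishing a structure theorem showing that $\Coh(X)$ admits an intrinsic description depending only on data common to all generic non-projective K3 surfaces, and then assembling the equivalence $\fun{F}\colon\Coh(X_1)\iso\Coh(X_2)$ from this description.

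First I would analyze torsion coherent sheaves on such an $X$. The hypothesis $\Pic(X)=0$ means there are no non-trivial line bundles, hence no effective divisors, hence no curves on $X$. Consequently any torsion coherent sheaf has $0$-dimensional support, and the Serre subcategory $\Coh_0(X)\subset\Coh(X)$ of $0$-dimensional sheaves is nothing but the abelian category of finitely supported families of finite length modules over the local rings $\so_{X,x}$. All such local rings are isomorphic to the ring $\so_{\CC^2,0}$ of germs of holomorphic functions in two variables, so this subcategory already admits a purely local, $X$-independent description once a bijection between the underlying point sets of $X_1$ and $X_2$ is fixed.

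Next I would treat the torsion-free part. Any torsion-free $F\in\Coh(X)$ embeds in its bidual $F\dual\dual$, and because $X$ is a smooth surface the reflexive sheaf $F\dual\dual$ is locally free, with quotient $F\dual\dual/F\in\Coh_0(X)$. So one is reduced to classifying holomorphic vector bundles on $X$. Here the genericity of $X$ is essential: since $c_1$ of any coherent sheaf lies in the image of $\Pic(X)=0$, every such bundle has vanishing first Chern class, and with respect to any K\"ahler class it has slope zero and no destabilising subsheaf can give a proper slope jump (as there are no algebraic classes to realise it). Invoking the Donaldson--Uhlenbeck--Yau correspondence to produce Hermite--Einstein metrics, one should conclude that every holomorphic vector bundle is polystable, hence a direct sum of stable summands, and the absence of divisors should force the list of stable summands to depend only on numerical invariants, in an $X$-independent manner.

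Granted this classification, the equivalence is built as follows: fix a bijection $\phi$ between the sets of closed points of $X_1$ and $X_2$, send $\so_{X_1}$ to $\so_{X_2}$, send each $0$-dimensional object supported at $x\in X_1$ to the corresponding object at $\phi(x)$ via the canonical isomorphism $\so_{X_1,x}\iso\so_{X_2,\phi(x)}$, and extend to arbitrary coherent sheaves as successive extensions of vector bundles and $0$-dimensional sheaves, which are controlled by $\Ext^1$-groups computed purely locally and thus preserved under $\phi$. The main obstacle is the vector bundle classification in the second step: ruling out exotic holomorphic bundles on a non-algebraic K3 surface is genuinely delicate and must use the genericity of $X$ essentially, since on special non-projective K3 surfaces additional stable bundles can appear; a closely related subtlety is to verify that the functor so assembled is exact and fully faithful, rather than a mere bijection on isomorphism classes, which requires that the matching of local $\Ext$-computations extends coherently across all extension classes.
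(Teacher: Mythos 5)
The survey does not prove this statement: it is quoted verbatim from Verbitsky's paper \cite{V4}, so there is no in-paper argument to compare against and your sketch has to stand on its own as a reconstruction of Verbitsky's proof. As such it contains genuine gaps. The most concrete one is the claim that every holomorphic vector bundle on a generic non-projective K3 is polystable. Since $\mathrm{NS}(X)=0$, every coherent sheaf has degree $0$ with respect to any K\"ahler class, so every torsion-free sheaf is indeed slope-\emph{semistable}; but Donaldson--Uhlenbeck--Yau produces Hermite--Einstein metrics only on polystable bundles, and semistable does not imply polystable: a non-split extension of a stable bundle $B$ by $\so_X$ exists whenever $H^1(B\dual)\neq0$, and such a bundle is not a direct sum of stable summands. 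So the reduction to ``direct sums of stable bundles'' fails; one is forced to work with Jordan--H\"older filtrations inside a genuinely non-semisimple abelian category. Worse, the assertion that the stable summands are determined by numerical invariants is false: the moduli space of stable sheaves with Mukai vector $v=(r,0,r-c_2)$ has dimension $v^2+2=2rc_2-2r^2+2$, which is strictly positive for every nonempty moduli space with $r\geq2$, so there are positive-dimensional families of pairwise non-isomorphic stable bundles with the same $(r,c_2)$. Verbitsky's actual classification identifies these moduli spaces on $X_1$ and on $X_2$ by showing that stable bundles with $c_1=0$ are hyperholomorphic and deform canonically along twistor lines connecting the two surfaces; nothing of this mechanism appears in your sketch.

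The second gap is the assembly step. An arbitrary bijection $\phi$ of closed-point sets, together with object-by-object assignments on skyscrapers and bundles, does not define a functor: one must match $\Hom(E,T)$ for $E$ a bundle and $T$ a zero-dimensional sheaf, and all the mixed extension and composition data, compatibly and functorially, which requires the point bijection, the identifications of local rings, and the correspondence of stable bundles to be chosen coherently with one another rather than independently. You flag this at the end as a ``subtlety to verify,'' but it is not a verification to be appended --- it is the actual content of the theorem, and it is precisely where the genericity of the surfaces and the hyperk\"ahler/twistor input are used. As written, the proposal establishes (correctly) only the easy first step, that torsion sheaves are zero-dimensional because $\Pic(X)=0$ rules out curves.
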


\begin{remark}\label{rmk:Verb1}
	(i) In the case of smooth projective varieties $X_1$ and $X_2$ a result of Gabriel (see \cite[Cor.\ 5.24]{H} for an easy proof using Fourier--Mukai functors) asserts that exactly the converse holds. Namely $X_1\iso X_2$ if and only if $\Coh(X_1)\iso\Coh(X_2)$.
	
	(ii) The above result was proved in \cite{V5} for the case of generic non-projective complex tori as well.
\end{remark}

Now take two non-isomorphic generic non-projective K3 surfaces $X_1$ and $X_2$. Theorem \ref{thm:Verb} implies that there exists an exact equivalence
\[
\fun{F}\colon\Db(X_1)\lto\Db(X_2).
\]
One may then wonder whether all such equivalences are of Fourier--Mukai type.

\begin{prop}\label{prop:NoFM}
	Let $X_1$ and $X_2$ be non-isomorphic generic non-projective K3 surfaces and let $\fun{F}\colon\Db(X_1)\to\Db(X_2)$ be the exact equivalence induced by an exact equivalence $\Coh(X_1)\iso\Coh(X_2)$. Then $\fun{F}$ is not of Fourier--Mukai type.
\end{prop}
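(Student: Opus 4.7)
Suppose for contradiction that $\fun{F}\iso\FM{\ke}$ for some $\ke\in\Db(X_1\times X_2)$; the aim is to show that such a kernel would force a biholomorphism $X_1\isomor X_2$, contradicting $X_1\niso X_2$. The first step exploits that $\fun{F}$ is by construction the derived functor of an equivalence of abelian categories $\Coh(X_1)\iso\Coh(X_2)$: it is therefore $t$-exact for the standard $t$-structures and sends simple objects to simple objects. Since the simple objects of $\Coh(X_i)$ are exactly the skyscraper sheaves at closed points, there is a bijection $f\colon X_1\to X_2$ on underlying sets with $\fun{F}(\ko_p)\iso\ko_{f(p)}$ for every closed $p\in X_1$.

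The second step is to translate this into a fiberwise condition on $\ke$. Writing $i_p\colon\{p\}\times X_2\mono X_1\times X_2$ for the closed embedding and identifying $\{p\}\times X_2\iso X_2$, a standard base-change computation applied to \eqref{eqn:FM} (using flatness of $p_1$) yields $\FM{\ke}(\ko_p)\iso i_p^*\ke$ in $\Db(X_2)$, where $i_p^*$ is understood as a derived functor. Hence the derived restriction $i_p^*\ke$ is isomorphic to the skyscraper $\ko_{f(p)}$ for every $p\in X_1$.

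The core step is the point-like-kernel analysis: from the condition that every fiberwise restriction of $\ke$ is a single skyscraper sheaf, one aims to conclude that $\ke\iso\ko_{\Gamma_g}$ for the graph of a holomorphic bijection $g\colon X_1\to X_2$ inducing $f$ on points, and that $g$ is in fact a biholomorphism. The classical argument in the projective case (see, e.g., \cite[Prop.~5.9]{H}) should adapt as follows: since the skyscrapers form a spanning class, $\Hom$-computations against them force $\ke$ to be a pure sheaf concentrated in a single degree; its set-theoretic support then coincides with $\Gamma_f:=\{(p,f(p)):p\in X_1\}$ and is therefore automatically a closed analytic subspace of $X_1\times X_2$; the first projection $\Gamma_f\to X_1$ is a proper holomorphic bijection and hence a biholomorphism, so $f$ lifts to a holomorphic map $g$; and the triviality $\Pic(X_i)=0$ together with the fiberwise isomorphism type of $\ke$ identifies $\ke$ with $\ko_{\Gamma_g}$. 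Applying the same analysis to the quasi-inverse $\fun{F}^{-1}$, which is again of Fourier--Mukai type by Proposition \ref{prop:FMpropert}, exhibits a holomorphic inverse to $g$, giving $X_1\iso X_2$ -- a contradiction.

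The main obstacle I expect is precisely this third step: porting the standard point-like-kernel analysis from the projective algebraic setting to compact non-algebraic complex manifolds. All of its ingredients -- flat base change, purity of $\ke$ via the spanning class of skyscrapers, the Proper Mapping Theorem, and triviality of $\Pic$ -- are available in the analytic category, but one must verify carefully that the chain of deductions survives outside the algebraic framework, where a direct appeal to a reference like \cite[Prop.~5.9]{H} is not immediately available.
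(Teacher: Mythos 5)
Your proposal is correct and follows essentially the same route as the paper: both first show that $\fun{F}$ sends skyscraper sheaves to skyscraper sheaves (the paper via minimal objects, you via simple objects --- the same thing here), and then deduce that a Fourier--Mukai kernel realizing such an equivalence would force $\fun{F}\iso(L\otimes(-))\comp g_*$ for an isomorphism $g\colon X_1\to X_2$, contradicting $X_1\not\iso X_2$. The only difference is that the paper simply cites \cite[Cor.\ 5.23]{H} for the point-like-kernel analysis (with the same implicit caveat about transporting it to the non-algebraic setting that you spell out), whereas you sketch that argument and flag the verification explicitly.
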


\begin{proof}
By assumption, $\fun{F}$ sends the minimal objects in $\Coh(X_1)$ to minimal objects in $\Coh(X_2)$ (recall that an object in an abelian category is minimal if it does not admit proper subobjects). In particular, following the same argument as in the proof of \cite[Cor.\ 5.24]{H}, we get that $\fun{F}$ sends skyscraper sheaves to skyscraper sheaves. Hence if $\fun{F}\iso\FM{\ke}$, for some $\ke\in\Db(X_1\times X_2)$, then there should be an isomorphism $f\colon X_1\to X_2$ and a line bundle $L\in\Pic(X_2)$ such that $\fun{F}\iso(L\otimes(-))\comp f_*$ (see, for example, \cite[Cor.\ 5.23]{H}). But this contradicts the assumption $X_1\not\iso X_2$.
\end{proof}

\subsection{Non fully faithful functors}\label{subsec:nonfullyfaithful}

Now we discuss how the fully faithfulness assumption can be removed. We first discuss a generalization of Theorem \ref{thm:Orlov} while later we observe that the faithfulness assumption is redundant anyway. Indeed full functors turn out to be automatically faithful.

\subsubsection{Negative Hom's and sheaves}\label{subsubsec:CS}

We now see a way to reduce the assumptions on the functor $\fun{F}$, that, to our knowledge, is the best one available in the literature in the context of smooth projective varieties. We will see later on how this has to be modified for perfect complexes on singular (projective) varieties. Some details about the key ingredients in the proof will be discussed in Section \ref{sec:solutions}.

\begin{thm}\label{thm:CS1} {\bf(\cite{CS}, Theorem 1.1.)}
Let $X_1$ and $X_2$ be smooth projective varieties and let $\fun{F}\colon\Db(X_1)\to\Db(X_2)$ be an
exact functor such that, for any $\kf,\kg\in\Coh(X_1)$,
\begin{eqnarray}\label{eqn:hyp}
\Hom_{\Db(X_2)}(\fun{F}(\kf),\fun{F}(\kg)[j])=0\;\;\mbox{if }j<0.
\end{eqnarray}
Then there exist $\ke\in\Db(X_1\times X_2)$
and an isomorphism of functors $\fun{F}\iso\FM{\ke}$. Moreover, $\ke$ is
uniquely determined up to isomorphism.
\end{thm}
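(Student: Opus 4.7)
The plan is to follow the general philosophy of the proof of Theorem~\ref{thm:Orlov}, but with the weaker hypothesis (\ref{eqn:hyp}) replacing full faithfulness as the source of the vanishings that drive Orlov's cellular/convolution construction of the kernel; uniqueness will fall out of a spanning-class argument that needs nothing beyond smoothness and projectivity. By Proposition~\ref{prop:exadj} the functor $\fun{F}$ automatically has both left and right adjoints, so the extra assumption made in Theorem~\ref{thm:Orlov} is free here. Choose a very ample line bundle $L$ on $X_1$ and let $\Omega=\{L^{-n}\}_{n\ge n_0}$ be the associated ample sequence in $\Coh(X_1)$; for $n_0\gg 0$ the set $\Omega$ is a spanning class of $\Db(X_1)$ and all $\Hom$'s between its elements are concentrated in degree $0$.

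First I would upgrade the sheaf-level vanishing (\ref{eqn:hyp}) to a vanishing between the image of a single sheaf (typically an element of $\Omega$) and the image of an arbitrary bounded complex: by truncating $\kc\in\Db(X_1)$ with respect to the standard t-structure and running the obvious $\Hom$-spectral sequence, (\ref{eqn:hyp}) forces $\Hom_{\Db(X_2)}(\fun{F}(\kf),\fun{F}(\kc)[j])=0$ for $j$ sufficiently negative (depending on the cohomological amplitude of $\kc$). Then, in the spirit of Orlov, build a candidate kernel by taking an explicit resolution of the diagonal of $X_1$ by objects of the form $L^{-n}\boxtimes L^n$ (available because $L$ is very ample), applying $\id_{X_1}\boxtimes\fun{F}$ term by term to land in $\Db(X_1\times X_2)$, and convolving. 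The vanishings just produced are precisely what is needed to show that the higher structure maps in this convolution lift coherently into a single well-defined object $\ke\in\Db(X_1\times X_2)$, together with a natural transformation $\fun{F}\to\FM{\ke}$.

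Evaluating both $\fun{F}$ and $\FM{\ke}$ on the spanning class $\Omega$ shows that this natural transformation is an isomorphism on $\Omega$, hence on all of $\Db(X_1)$. For uniqueness, suppose $\FM{\ke}\iso\FM{\kf}$: then $\ke$ and $\kf$ agree after pairing with any element of $\Omega$ on the first factor of $X_1\times X_2$, and a Yoneda-plus-K\"unneth argument on the product then forces $\ke\iso\kf$ (this part uses only that $\Omega$ spans). The hard part will be the promotion step at the start of the second paragraph: while (\ref{eqn:hyp}) is literally given between sheaves, cones and extensions formed inside $\Db(X_1)$ can a priori produce morphisms between images in unexpected negative degrees, and ruling these out --- so that the cellular construction actually assembles into a single object of $\Db(X_1\times X_2)$ rather than an indeterminate tower of extensions --- is the place where the proof genuinely departs from the fully faithful setting of Theorem~\ref{thm:Orlov}. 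One expects here a careful double induction on cohomological amplitude and on the length of a resolution by elements of $\Omega$, feeding (\ref{eqn:hyp}) in term by term, possibly together with the use of both adjoints to reduce statements on $X_2$ to statements on $X_1$.
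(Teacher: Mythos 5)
Your existence strategy is essentially the one of \cite{CS} (and the one sketched in Section \ref{subsec:ingredients}): an ample sequence on $X_1$, a resolution of $\ko_{\Delta}$ by external products of line bundles, application of $\id\boxtimes\fun{F}$ termwise, and a convolution whose existence and uniqueness is fed by the negative-$\Hom$ vanishings extracted from \eqref{eqn:hyp} through truncation triangles. Two caveats, though. First, for the final propagation step you need that $\{L^{-n}\}_{n\ge n_0}$ \emph{classically generates} $\Db(X_1)$, not merely that it is a spanning class; an isomorphism of functors on a spanning class does not propagate by itself. Second, and more seriously, the convolution does not come ``together with a natural transformation $\fun{F}\to\FM{\ke}$'' defined on all of $\Db(X_1)$: it only produces isomorphisms $\fun{F}(L^{-n})\iso\FM{\ke}(L^{-n})$ compatible with morphisms between the $L^{-n}$. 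Extending this to a natural isomorphism on $\Coh(X_1)$ (by writing every sheaf as a convolution of objects of the ample sequence) and then to $\Db(X_1)$ (by induction on cohomological amplitude) is where most of the work of \cite{CS} actually lies --- this is the chain $\theta_1\to\theta_2\to\theta_3$ of Section \ref{subsubsec:convolutions} and Remark \ref{rmk:CSExtending}, not a formal consequence of the construction.

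The genuine gap is your uniqueness argument. You claim that $\FM{\ke}\iso\FM{\kf}$ forces $\ke\iso\kf$ by ``Yoneda plus K\"unneth'', \emph{using only that $\Omega$ spans}. If such an argument worked, it would prove that $\FM[X_1\to X_2]{\farg}$ is essentially injective for all smooth projective $X_1$, $X_2$, and this is false: by Theorem \ref{thm:nonuniq}, on an elliptic curve the kernels $\ko_\Delta\oplus\ko_\Delta[1]$ and $\cone{\alpha}$ are non-isomorphic yet define isomorphic Fourier--Mukai functors. The flaw is that knowing the values $\FM{\ke}(L^{n})$, equivalently the graded spaces $\Hom(L^{-n}\boxtimes\km,\ke[j])$ for $\km$ running through generators of $\Db(X_2)$, does not determine $\ke$ as an object of a triangulated category: Yoneda requires the representable functor itself, not its values on a set of generators. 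The uniqueness in Theorem \ref{thm:CS1} genuinely uses \eqref{eqn:hyp}: the argument of \cite[Sect.\ 4.3]{CS} shows that \emph{any} kernel representing $\fun{F}$ must be a convolution of the canonical complex coming from the resolution of the diagonal (compare the proof of Lemma \ref{lem:uniqPP1}), and that the vanishings supplied by \eqref{eqn:hyp} make this convolution unique up to isomorphism. Your proposal needs this mechanism, or some substitute that visibly consumes the hypothesis \eqref{eqn:hyp}, in the uniqueness step.
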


A class of exact functors satisfying \eqref{eqn:hyp} is clearly provided by full functors. Unfortunately this is not a really interesting case, as in Section \ref{subsec:full} we will show that, in the present context, all full functors are actually automatically faithful.

\begin{ex}\label{ex:sathyp}
For a rather trivial example of a non-full exact functor satisfying
\eqref{eqn:hyp}, we can consider $\id\oplus\id\colon\Db(X)\to\Db(X)$,
where $X$ is a smooth projective variety. More generally, given a line
bundle $L\in\Pic(X)$, we can take $\FM{\Delta_*L}\oplus\FM{\Delta_*L}$
(see Example \ref{ex:FM}).
\end{ex}

\begin{ex}\label{ex:other}
Notice that all exact functors $\Db(X_1)\to\Db(X_2)$ obtained by
deriving an exact functor $\Coh(X_1)\to\Coh(X_2)$ are examples of
functors satisfying \eqref{eqn:hyp}.
\end{ex}

\begin{remark}\label{rmk:twisted}
	The original version of Theorem \ref{thm:CS1}, stated in \cite{CS}, deals with the more general notion of \emph{twisted variety} where condition \eqref{eqn:hyp} can be stated as well.
\end{remark}

\subsubsection{Full implies faithful}\label{subsec:full}

In this section we assume that $\K$ is algebraically closed of
characteristic $0$. Let $X_1$ and $X_2$ be smooth projective varieties
and assume that an exact functor $\fun{F}\colon\Db(X_1)\to\Db(X_2)$ is
full and such that $\fun{F}\not\iso 0$. By Theorem \ref{thm:CS1},
$\fun{F}$ is a Fourier--Mukai functor. So $\fun{F}\iso\FM{\ke}$, for
some $\ke\in\Db(X_1\times X_2)$.

There exists a very useful criterion to establish when a Fourier--Mukai functor $\FM{\ke}\colon\Db(X_1)\to\Db(X_2)$ is fully faithful.

\begin{thm}\label{thm:criterion}{\bf(\cite{BO} and \cite{Br})}
Under the assumptions above, $\FM{\ke}$ is fully faithful if and only if
	\[
	\Hom_{\Db(X_2)}(\FM{\ke}(\ko_{x_1}),\sh[i]{\FM{\ke}(\ko_{x_2})})\iso\begin{cases}\K & \text{if $x_1=x_2$ and $i=0$}\\0 & \text{if $x_1\neq x_2$ or $i\not\in[0,\dim(X_1)]$}\end{cases}
	\]
	for all closed points $x_1,x_2\in X_1.$
\end{thm}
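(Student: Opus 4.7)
The plan is to prove both implications via a spanning-class argument. For the forward direction, full faithfulness gives $\Hom(\FM{\ke}(\ko_{x_1}),\FM{\ke}(\ko_{x_2})[i])\iso\Hom(\ko_{x_1},\ko_{x_2}[i])$, so one just has to compute $\Ext^i(\ko_{x_1},\ko_{x_2})$ directly on $X_1$. Disjointness of supports kills everything when $x_1\neq x_2$, while for $x_1=x_2$ the smoothness of $X_1$ produces a Koszul resolution of $\ko_x$ giving $\Ext^i(\ko_x,\ko_x)\iso\wedge^i T_xX_1$, which vanishes outside $[0,\dim X_1]$ and equals $\K$ in degree $0$.

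For the backward direction, I would work with the spanning class $\Omega=\{\ko_x:x\in X_1\text{ closed}\}\subset\Db(X_1)$: the vanishing of all $\Hom(\ko_x,F[i])$ forces every cohomology sheaf $\s{H}^i(F)$ to have empty support, so $F\iso 0$ (and similarly on the other side), which is where smoothness/properness is used. Next, invoke Proposition \ref{prop:FMpropert} to obtain a right adjoint $\FM{\ke_R}$, and reduce full faithfulness to showing that the unit $\eta\colon\id\to\FM{\ke_R}\comp\FM{\ke}$ is an isomorphism; since the subcategory on which $\eta$ is an iso is triangulated and closed under summands, it suffices to check this on the spanning class $\Omega$.

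The core of the argument is therefore to show $\eta_{x}\colon\ko_x\isomor P:=\FM{\ke_R}\FM{\ke}(\ko_x)$ for every closed point $x\in X_1$. By adjunction $\Hom(P,\ko_y[i])\iso\Hom(\FM{\ke}(\ko_x),\FM{\ke}(\ko_y)[i])$, and the hypothesis for $y\neq x$ forces $P$ to be set-theoretically supported at $x$. At $y=x$ we have $\Hom(P,\ko_x)=\K$, $\Hom(P,\ko_x[i])=0$ for $i<0$ or $i>\dim X_1$, and using that $\omega_{X_1}\otimes\ko_x=\ko_x$ the Serre functor $\fun{S}_{X_1}$ acts on $\ko_x$ as the shift by $\dim X_1$, which via Serre duality lets us dualise these $\Hom$'s into $\Hom(\ko_x,P[j])$'s concentrated in $[0,\dim X_1]$ with top piece of dimension $1$. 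Combined with the fact that $\eta_x$ is nonzero (it corresponds under adjunction to $\id_{\FM{\ke}(\ko_x)}$), one argues that the composition $\ko_x\xrightarrow{\eta_x}P\to\ko_x$ with the essentially unique nonzero map $P\to\ko_x$ is a nonzero scalar, so $\eta_x$ splits; the cone of $\eta_x$ would then be a summand of $P$ supported at $x$ with $\Hom$ vanishing against every $\ko_y[i]$, hence zero by the spanning class property applied locally.

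The main obstacle is exactly this last step: the hypothesis gives no information about $\Hom(P,\ko_x[i])$ for $0<i\leq\dim X_1$, so one cannot simply match dimensions of $\Ext$-groups. The argument has to leverage Serre duality together with the explicit naturality of $\eta_x$ (coming from adjunction with a Fourier–Mukai right adjoint) rather than a purely numerical Bridgeland-style criterion. Once $\eta_x$ is known to be an isomorphism for every closed point, propagation along the spanning class concludes full faithfulness.
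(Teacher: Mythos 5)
The paper does not actually prove this criterion --- it is quoted from \cite{BO} and \cite{Br} --- so I can only judge your argument on its own merits. The forward implication is correct and standard (Koszul resolution of $\ko_x$ on the smooth $X_1$). In the backward implication there are two repairable slips and one genuine gap. First, the reduction ``the subcategory on which $\eta$ is an isomorphism is triangulated and closed under summands, so it suffices to check on $\Omega$'' is not valid: for $\dim X_1\geq 1$ the skyscrapers do \emph{not} generate $\Db(X_1)$ as a thick triangulated subcategory (their classes span only the rank-zero part of $K(X_1)$, so $\ko_{X_1}$ is never reached). The correct reduction is the spanning-class criterion for full faithfulness, which needs both adjoints and both halves of the spanning condition. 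Second, you have mixed up the two adjoints: with $P=\FM{\ke_R}\comp\FM{\ke}(\ko_x)$ adjunction computes $\Hom(\ko_y,P[i])$, not $\Hom(P,\ko_y[i])$; the groups you want to control are $\Hom(Q_x,\ko_y[i])\iso\Hom(\FM{\ke}(\ko_x),\FM{\ke}(\ko_y)[i])$ for $Q_x:=\FM{\ke_L}\comp\FM{\ke}(\ko_x)$, and the relevant map is the counit $\varepsilon_x\colon Q_x\to\ko_x$, which is indeed the essentially unique nonzero map since it corresponds to $\id_{\FM{\ke}(\ko_x)}$.

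The genuine gap is the one you flag yourself and then leave open. The hypothesis controls $\Hom(Q_x,\ko_y[i])$ only for $y\neq x$, for $i\notin[0,\dim X_1]$, and for $(y,i)=(x,0)$; it says nothing about $\Hom(Q_x,\ko_x[i])$ with $0<i\leq\dim X_1$, so the cone of $\varepsilon_x$ need not be left-orthogonal to all $\ko_y[i]$ and cannot be killed by the spanning-class property, nor does Serre duality at the single point $x$ supply the missing $\Ext$'s. This is why the proofs in \cite{BO} and \cite{Br} are not pointwise: they work globally with the kernel $\kf$ of the composite $\FM{\ke_L}\comp\FM{\ke}$ on $X_1\times X_1$. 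The orthogonality for $x\neq y$ forces $\supp(\kf)$ into the diagonal; the vanishing for $i<0$, imposed simultaneously for all $x$, combined with a codimension-of-support and flatness argument, shows that $\kf$ is a single sheaf flat over the first factor; and the condition $\Hom(\FM{\ke}(\ko_x),\FM{\ke}(\ko_x))\iso\K$ then forces the fibres to have length one, so $\kf\iso\Delta_*L$ for a line bundle $L$ and the counit is an isomorphism. Without this global step --- or some substitute for it --- the argument does not close.
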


Thus, because of this result and the
fact that $\fun{F}$ is full, to show that the functor is also faithful
it is enough to prove that there are no closed points $x\in X_1$ such
that $\Hom(\fun{F}(\ko_x),\fun{F}(\ko_x))=0$ or, in other words, such
that $\fun{F}(\ko_x)\iso 0.$

To see this, take the left adjoint $\fun{G}\colon\Db(X_2)\to\Db(X_1)$ of $\fun{F}$ and consider the
composition $\fun{G}\comp\fun{F}$ which is again a Fourier--Mukai
functor (see Proposition \ref{prop:FMpropert}), hence isomorphic to $\FM{\kf}$ for some $\kf\in\Db(X_1\times
X_1).$ Assume that there are $x_1,x_2\in X_1$ such that
$\fun{F}(\ko_{x_1})\not\iso 0$ while $\fun{F}(\ko_{x_2})\iso 0.$ By
\cite{BO} (see, in particular, Proposition 1.5 there) the Mukai vector $v(\FM{\kf}(\ko_{x_1}))$ is not zero.

On the other hand, by Propositions \ref{prop:FM1}, \ref{prop:FM2} and \ref{prop:FM3}, the functor $\FM{\kf}$
induces a morphism $\FMH{\kf}\colon H^*(X_1,\QQ)\to H^*(X_1,\QQ)$
such that
\[
0\neq v(\FM{\kf}(\ko_{x_1}))=\FMH{\kf}(v(\ko_{x_1}))=\FMH{\kf}(v(\ko_{x_2}))=v(\FM{\kf}(\ko_{x_2}))=0.
\]
This contradiction proves that, if $\fun{F}$ were not faithful, then
$\fun{F}(\ko_x)\iso 0$ for every closed point $x\in X$.

We claim that if this is true, then $\fun{F}\iso0$. Indeed let $\fun{G}$ and $\fun{H}$ be the left and right adjoints of $\fun{F}$. Of course, $\fun{G}\comp\fun{F}(\ko_x)\iso 0$, for all closed points $x$ in $X_1$. In particular, for all $n\in\ZZ$ and any $\kb\in\Db(X_1)$, we have
\[
0=\Hom(\fun{G}\comp\fun{F}(\ko_x),\kb[n])\iso\Hom(\ko_x,\fun{H}\comp\fun{F}(\kb)[n]).
\]
Therefore $\fun{H}\comp\fun{F}(\kb)\iso 0$, for all $\kb\in\Db(X_1)$. But now
\[
0=\Hom(\kb,\fun{H}\comp\fun{F}(\kb))\iso\Hom(\fun{F}(\kb),\fun{F}(\kb)).
\]
Thus we would get $\fun{F}(\kb)\iso 0$, for all $\kb\in\Db(X_1)$ and so we proved the following result.

\begin{thm}\label{thm:full}
	Let $X_1$ and $X_2$ be smooth projective varieties over an algebraically closed field of characteristic $0$ and assume that an exact functor $\fun{F}\colon\Db(X_1)\to\Db(X_2)$ is
	full. If $\fun{F}\not\iso 0$, then $\fun{F}$ is faithful as well.
\end{thm}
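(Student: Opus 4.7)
My plan is to reduce fully faithfulness to a cohomological check on skyscrapers and then exploit fullness at each step. First, I would observe that any full exact functor $\fun{F}$ automatically satisfies the negative-Ext vanishing \eqref{eqn:hyp}: a morphism $\fun{F}(\kf)\to\fun{F}(\kg)[j]$ for $\kf,\kg\in\Coh(X_1)$ and $j<0$ lifts by fullness to an element of $\Hom_{\Db(X_1)}(\kf,\kg[j])=0$. Theorem~\ref{thm:CS1} then guarantees an isomorphism $\fun{F}\iso\FM{\ke}$ for some kernel $\ke\in\Db(X_1\times X_2)$, placing us in a setting where the cohomological machinery of Section~\ref{subsubsec:cohomo} becomes available.

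Next, I would invoke Theorem~\ref{thm:criterion}: the Fourier--Mukai functor $\FM{\ke}$ is fully faithful precisely when the prescribed $\Hom$ vanishings on skyscrapers hold. The off-diagonal and out-of-range vanishings ($x_1\neq x_2$ or $i\notin[0,\dim X_1]$) are automatic from fullness, because the corresponding $\Hom$ groups between skyscrapers already vanish upstairs in $\Db(X_1)$ and fullness transfers this vanishing downstairs. The diagonal condition $\Hom(\fun{F}(\ko_x),\fun{F}(\ko_x))\iso\K$ at $i=0$ holds as soon as $\fun{F}(\ko_x)\not\iso 0$, since fullness produces a surjection from $\K=\Hom(\ko_x,\ko_x)$. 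So the whole problem collapses to proving that no closed point $x\in X_1$ is killed by $\fun{F}$.

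The core step is the dichotomy: either every skyscraper is sent to $0$, or none is. To establish it, use Proposition~\ref{prop:exadj} to produce a left adjoint $\fun{G}$, and Proposition~\ref{prop:FMpropert} to realize $\fun{G}\comp\fun{F}\iso\FM{\kf}$ for some $\kf\in\Db(X_1\times X_1)$. Suppose, for contradiction, that $\fun{F}(\ko_{x_1})\not\iso 0$ while $\fun{F}(\ko_{x_2})\iso 0$. A non-triviality argument from \cite[Proposition~1.5]{BO} then yields $v(\FM{\kf}(\ko_{x_1}))\neq 0$; but commutativity of \eqref{eqn:coh}, combined with the fact that all closed-point skyscrapers share the same Mukai vector, forces $\FMH{\kf}$ to send $v(\ko_{x_1})=v(\ko_{x_2})$ simultaneously to a nonzero class and to $0$, a contradiction.

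Finally, I would rule out the all-vanishing case by a standard adjunction argument. Let $\fun{H}$ be the right adjoint of $\fun{F}$. From $\fun{G}\comp\fun{F}(\ko_x)\iso 0$ and the adjunctions $\fun{G}\dashv\fun{F}\dashv\fun{H}$, for every $\kb\in\Db(X_1)$ and $n\in\ZZ$ the group $\Hom(\ko_x,\fun{H}\comp\fun{F}(\kb)[n])$ vanishes; since skyscrapers form a spanning class on a smooth projective variety, this forces $\fun{H}\comp\fun{F}(\kb)\iso 0$. Then $\Hom(\fun{F}(\kb),\fun{F}(\kb))\iso\Hom(\kb,\fun{H}\comp\fun{F}(\kb))=0$ gives $\fun{F}(\kb)\iso 0$ for every $\kb$, contradicting $\fun{F}\not\iso 0$. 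I expect the hardest conceptual ingredient to be the non-vanishing of the Mukai vector imported from \cite{BO}: this is the only step that uses genuine geometric content of the kernel, whereas everything else is a formal consequence of adjunction theory and of the earlier results already available.
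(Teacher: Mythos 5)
Your proposal is correct and follows essentially the same route as the paper: reduce to the Bondal--Orlov/Bridgeland criterion via Theorem \ref{thm:CS1}, use the Mukai-vector argument with \cite[Prop.\ 1.5]{BO} and the commutativity of \eqref{eqn:coh} to establish the dichotomy on skyscrapers, and rule out the all-vanishing case by the adjunction chain $\fun{G}\dashv\fun{F}\dashv\fun{H}$ together with the spanning-class property of skyscrapers. The only difference is that you spell out more explicitly why fullness yields \eqref{eqn:hyp} and why the criterion collapses to the non-vanishing of $\fun{F}(\ko_x)$, details the paper leaves implicit.
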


\begin{remark}\label{rmk:full}
	(i) Notice that in \cite{COS} a more general result is
  proved. In particular, the target category can be any triangulated
  category while the source category can be the category of perfect
  (supported) complexes on a noetherian scheme.
	
	(ii) One may easily extend the proof above to the case of twisted varieties. For this we just need to use the twisted version of the Chern character defined in \cite{HS} and again apply \cite[Prop.\ 1.5]{BO}. We leave this to the reader.
\end{remark}

\section{The (partial) answers to (Q2)--(Q5)}\label{sec:solutions}

We postpone for the moment the discussion about (Q1) which will be examined in Section \ref{sec:existence}. The remaining problems can be studied in a unitary way explained here below.

\subsection{Perfect complexes and good news}\label{subsec:goodnews}

We start our discussion with a case where all the above five questions
have a positive answer. In particular, this implies that (in the smooth
case) interesting examples answering these questions negatively have to
be searched for in dimension greater than zero.

\smallskip

We begin by extending the setting explained in the previous section. In
particular, let $X$ be a projective (not necessarily smooth) scheme
over $\K$. Denote by $\Dp(X)$ the category of \emph{perfect
complexes} on $X$ consisting of the objects in $\D(\Qcoh(X))$ which are quasi-isomorphic to bounded complexes of locally free sheaves of finite type over $X$. Obviously, $\Dp(X)\subseteq\Db(X)$ and the equality holds if and only if $X$ is regular.

The category $\Dp(X)$ coincides with the full subcategory of compact objects
in $\D(\Qcoh(X))$. Recall that an object $A$ in a triangulated
category $\cat{T}$ is \emph{compact} if, for each family of objects
$\{X_i\}_{i\in I}\subset\cat{T}$ such that $\bigoplus_i X_i$ exists in $\cat{T}$,
the canonical map
\[
\bigoplus_i\Hom(A,X_i)\lto\Hom\left(A,\oplus_i X_i\right)
\]
is an isomorphism.

In the singular setting we redefine the notion of
Fourier--Mukai functors once more since in general we cannot expect the Fourier--Mukai
kernels of exact functors $\Dp(X_1)\to\Dp(X_2)$ to be objects in
$\Dp(X_1\times X_2)$, but rather in $\Db(X_1\times X_2)$. More
precisely, one can show the following (see, for example,
\cite[Lemma 4.3]{CS1} for the proof).

\begin{lem}\label{lem:boundcoh}
Let $X_1$ and $X_2$ be projective schemes and let
$\ke\in\D(\Qcoh(X_1\times X_2))$ be an object such that
$\FM{\ke}\colon\D(\Qcoh(X_1))\to\D(\Qcoh(X_2))$ (defined as in
\eqref{eqn:FM}) sends $\Dp(X_1)$ to $\Db(X_2)$. Then
$\ke\in\Db(X_1\times X_2)$. Conversely, any $\ke\in\Db(X_1\times X_2)$
yields a Fourier--Mukai functor $\FM{\ke}\colon\Dp(X_1)\to\Db(X_2)$.
\end{lem}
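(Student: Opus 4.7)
The converse direction is essentially standard. If $\ke\in\Db(X_1\times X_2)$ and $\kf\in\Dp(X_1)$, then the pullback $p_1^*\kf$ is perfect on $X_1\times X_2$ (since $p_1$ is flat), hence $\ke\Lotimes p_1^*\kf\in\Db(X_1\times X_2)$ because tensoring with a perfect complex preserves boundedness and coherence. Applying $\R(p_2)_*$ along the projective morphism $p_2$ then keeps us in $\Db(X_2)$ by Grothendieck's finiteness theorem.

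For the forward direction, the plan is to probe $\ke$ by a compact generator of $\D(\Qcoh(X_1\times X_2))$ and read off boundedness and coherence of $\ke$ from the hypothesis. Fix a compact generator $G_1\in\Dp(X_1)$ of $\D(\Qcoh(X_1))$ and, analogously, $G_2\in\Dp(X_2)$ of $\D(\Qcoh(X_2))$ --- one may take $G_i=\bigoplus_{k=0}^{N}\ko_{X_i}(k)$ with $\ko_{X_i}(1)$ ample and $N$ sufficiently large. Then $G:=p_1^*G_1\otimes p_2^*G_2$ is a compact generator of $\D(\Qcoh(X_1\times X_2))$ lying in $\Dp(X_1\times X_2)$. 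Since $p_1^*G_1$ is perfect and hence dualizable, tensor--hom adjunction together with the projection formula gives
\[
\R\Hom_{\D(\Qcoh(X_1\times X_2))}(G,\ke)\iso\R\Hom_{\D(\Qcoh(X_2))}\bigl(G_2,\FM{\ke}(G_1\dual)\bigr).
\]
The hypothesis yields $\FM{\ke}(G_1\dual)\in\Db(X_2)$, and the perfectness of $G_2$ then forces the right-hand side to be a bounded complex of finite-dimensional $\K$-vector spaces.

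To conclude, one invokes the characterization that for a projective scheme $Z$ and a compact generator $G$ of $\D(\Qcoh(Z))$ which lies in $\Dp(Z)$, an object of $\D(\Qcoh(Z))$ belongs to $\Db(\Coh(Z))$ precisely when its derived Homs out of $G$ form a bounded complex with finite-dimensional cohomology. This is a version of the derived Morita equivalence $\D(\Qcoh(Z))\iso\D(\End(G))$ under which $\Db(\Coh(Z))$ corresponds to the bounded derived category of finitely presented $\End(G)$-modules. The main obstacle of the proof is precisely this last characterization: one implication is immediate, whereas the converse requires a truncation argument showing that each cohomology sheaf of $\ke$ must be coherent (forced by the finite-dimensionality of the $\Ext$-groups with $G$) and that the cohomological amplitude of $\ke$ must be bounded (forced by the boundedness of $\R\Hom(G,\ke)$).
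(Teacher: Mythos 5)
Your argument is correct and is essentially the proof the paper relies on: the survey defers to \cite[Lemma 4.3]{CS1}, which likewise tests $\ke$ against the objects $\ko_{X_1}(i)\boxtimes\ko_{X_2}(j)$, rewrites $\R\Hom(\ko_{X_1}(i)\boxtimes\ko_{X_2}(j),\ke)$ by adjunction as $\R\Hom(\ko_{X_2}(j),\FM{\ke}(\ko_{X_1}(i)\dual))$, and concludes via Rouquier's characterization of $\Db(\Coh(Z))$ inside $\D(\Qcoh(Z))$ for $Z$ projective, while the converse direction is the same flatness/perfectness/properness argument you give. The only cosmetic differences are that you bundle the test objects into a single compact generator and phrase the final criterion as a derived Morita statement instead of quoting \cite[Lemma 7.49]{R}, which is where the ``main obstacle'' you correctly isolate is actually proved.
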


Hence given two projective schemes $X_1$ and $X_2$ one can consider
the functor
\[
\FM[X_1\to X_2]{\farg}\colon\Db(X_1\times X_2)\lto\ExFun(\Dp(X_1),\Db(X_2))
\]
(which coincides with \eqref{eqn:fun} in the smooth case) and for it
one can again ask questions (Q1)--(Q5).

\smallskip

Now, if $X$ is a projective scheme over $\K$, it is an easy exercise
to show that every exact functor
$\fun{F}\colon\Dp(\spec\K)=\Db(\spec\K)\to\Db(X)$ is of Fourier--Mukai
type. More precisely, there exists an isomorphism of exact functors
$\fun{F}\iso\FM{\ke}$, where
\[
\ke:=\fun{F}(\ko_{\spec\K})\in\Db(X)=\Db(\spec\K\times X).
\]
It is also straightforward to see that the functor $\FM[\spec\K\to
X]{\farg}$ is an equivalence of categories, so that all the above
questions have a positive answer in this case.

\smallskip

If we exchange the role of $X$ and $\spec\K$ above, the situation
becomes slightly more complicated but nevertheless it is not difficult to see that
$\FM[X\to\spec\K]{\farg}$ is an equivalence as well. Indeed, as an easy consequence of \cite[Cor.\ 7.50]{R} (see also \cite[Thm.\ 3.3]{Ba1}), there is an equivalence
\[
\Db(X)\lto\ExFun(\Dp(X)\opp,\Db(\spec\K))
\]
and one can check that $\FM[X\to\spec\K]{\farg}$ is induced from this
by the exact anti-equivalence $\Dp(X)\iso\Dp(X)\opp$
sending $\kf$ to $\kf\dual$.

\subsection{Non-uniqueness of Fourier--Mukai kernels}\label{subsec:Q2}

The aim of this section is to prove that, even in the smooth case,
(Q2) has a negative answer in general. First observe that the functor $\FM[X_2\to X_1]{\farg}$ satisfies any of (Q1)--(Q5) if and only if $\FM[X_1\to X_2]{\farg}$ does. To see this, one identifies $\FM[X_2\to X_1]{\farg}$ with the opposite functor of $\FM[X_1\to X_2]{\farg}$ under the equivalences $\Db(X_1\times X_2)\to\Db(X_1\times X_2)\opp$
(defined on the objects by $\ke\mapsto\ke\dual\otimes
p_1^*\sh[d_1]{\ds_{X_1}}$) and
$\ExFun(\Db(X_1),\Db(X_2))\to\ExFun(\Db(X_2),\Db(X_1))\opp$ (defined
on the objects by $\fun{F}\mapsto\fun{F}_*$, the right adjoint of
$\fun{F}$). A key ingredient for this is Proposition
\ref{prop:exadj}. Here we set $d_i:=\dim(X_i)$.

\smallskip

For later use, we start studying the case of the projective line which provides a positive result related to (Q2).

\begin{lem}\label{lem:uniqPP1}
	If $X_1$ or $X_2$ is $\PP^1$, then $\FM[X_1\to X_2]{\farg}$ is
	essentially injective.
\end{lem}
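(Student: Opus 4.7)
By the symmetry observed just before the statement, $\FM[X_1\to X_2]{\farg}$ is essentially injective if and only if $\FM[X_2\to X_1]{\farg}$ is, so I may assume $X_1=\PP^1$ and set $X:=X_2$. Given $\ke,\ke'\in\Db(\PP^1\times X)$ together with a natural isomorphism $\phi\colon\FM{\ke}\isomor\FM{\ke'}$, the plan is to produce an isomorphism $\ke\iso\ke'$ by explicitly recovering each kernel, up to isomorphism, from its Fourier--Mukai functor. The main device will be the Beilinson resolution of the diagonal on $\PP^1\times\PP^1$,
\[
0\lto\ko(-1)\boxtimes\ko(-1)\lto\ko_{\PP^1\times\PP^1}\lto\ko_\Delta\lto 0,
\]
which, convolved with $\ke$, should present $\ke$ as a cone of a morphism built from $\FM{\ke}(\ko_{\PP^1})$ and $\FM{\ke}(\ko_{\PP^1}(-1))$.

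More precisely, the first step is to convolve the distinguished triangle induced by the resolution above with $\ke$ and apply $\ke\kcomp\ko_\Delta\iso\ke$; a standard computation with the projection formula and flat base change on the Cartesian square formed by the projections $\PP^1\times\PP^1\times X\to\PP^1\times X$ should then yield a distinguished triangle in $\Db(\PP^1\times X)$,
\[
p_1^*\ko_{\PP^1}(-1)\otimes p_2^*\FM{\ke}(\ko_{\PP^1}(-1))\mor{\alpha_\ke}p_2^*\FM{\ke}(\ko_{\PP^1})\lto\ke\lto[+1],
\]
where $p_1,p_2$ are the projections from $\PP^1\times X$, and analogously for $\ke'$. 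Here $\alpha_\ke$ is induced from the inclusion $\ko(-1)\boxtimes\ko(-1)\mono\ko_{\PP^1\times\PP^1}$, i.e.\ from multiplication by the canonical section $s\in H^0(\ko(1)\boxtimes\ko(1))=V\otimes V$ (with $V:=H^0(\PP^1,\ko(1))$) that cuts out the diagonal.

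The crux will be to verify that $\alpha_\ke$ depends only on $\FM{\ke}$ viewed as a functor, not on $\ke$ as a kernel. Decomposing $s=\sum_j v_j\otimes w_j$ with $v_j,w_j\in V=\Hom_{\Db(\PP^1)}(\ko(-1),\ko)$ and tracing the convolution through, one should obtain
\[
\alpha_\ke=\sum_j p_1^*v_j\otimes p_2^*\FM{\ke}(w_j),
\]
and likewise for $\ke'$. The natural isomorphism $\phi$ then provides isomorphisms $\phi_{\ko_{\PP^1}}$ and $\phi_{\ko_{\PP^1}(-1)}$ which, by naturality of $\phi$ at each $w_j\colon\ko(-1)\to\ko$, intertwine $\alpha_\ke$ and $\alpha_{\ke'}$. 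Applying axiom (TR3) to complete the resulting commutative square to a morphism of triangles will then produce an isomorphism $\ke\iso\ke'$ on the third vertices. The main (though rather mild) obstacle I expect is the careful identification of $\alpha_\ke$ described above: this is a routine but slightly fiddly compatibility check involving adjunctions, base change and the projection formula, which must be handled with some care.
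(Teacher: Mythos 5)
Your proposal is correct and follows essentially the same route as the paper: the paper also reduces to $X_1=\PP^1$, uses the two-term resolution of the diagonal $0\to\so_{\PP^1\times\PP^1}(-1,-1)\to\so_{\PP^1\times\PP^1}\to\sod\to0$, and identifies any kernel of $\fun{F}$ as the cone of $\varphi=x_0\boxtimes\fun{F}(x_1)-x_1\boxtimes\fun{F}(x_0)$, which is precisely your $\alpha_\ke=\sum_j p_1^*v_j\otimes p_2^*\FM{\ke}(w_j)$ for $s=x_0\otimes x_1-x_1\otimes x_0$. The only difference is presentational: the paper cites the convolution argument of \cite[Sect.\ 4.3]{CS} for the identification of $\alpha_\ke$ in terms of the functor alone, whereas you carry out the base-change/projection-formula computation directly.
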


	\begin{proof}
	As observed above, we can assume that
	$X_1=\PP^1$. Since on $\PP^1\times\PP^1$ there is a resolution of the
	diagonal of the form
	\[
	0\to\so_{\PP^1\times\PP^1}(-1,-1)\mor{x_0\boxtimes x_1-x_1\boxtimes x_0}\so_{\PP^1\times\PP^1}\to\sod\to0,
	\]
	the argument in \cite[Sect.\ 4.3]{CS} shows that, for every exact
	functor $\fun{F}\colon\Db(\PP^1)\to\Db(X_2)$, any object $\ke$ in
	$\Db(\PP^1\times X_2)$ such that $\fun{F}\iso\FM{\ke}$ is necessarily
	a convolution of the complex
	\[
	\so_{\PP^1}(-1)\boxtimes\fun{F}(\so_{\PP^1}(-1))
	\mor{\varphi:=x_0\boxtimes\fun{F}(x_1)-x_1\boxtimes\fun{F}(x_0)}
	\so_{\PP^1}\boxtimes\fun{F}(\so_{\PP^1}),
	\]
	hence it is uniquely determined up to isomorphism as the cone of
	$\varphi$.
\end{proof}

As soon as the genus of the curve grows, the situation becomes more complicated and, in a sense, more interesting. Indeed, we have the following result that is \cite[Thm.\ 1.1]{CS1}.

\begin{thm}\label{thm:nonuniq}
For every elliptic curve $X$ over an algebraically closed
field there exist $\ke_1,\ke_2\in\Db(X\times X)$ such that
$\ke_1\not\iso\ke_2$ but $\FM{\ke_1}\iso\FM{\ke_2}$.
\end{thm}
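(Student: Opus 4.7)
The strategy is to produce $\ke_1, \ke_2$ as a split and non-split extension of the same pair of objects, by exploiting a failure of faithfulness of $\FM[X \to X]{\farg}$ on Ext groups (a phenomenon akin to the one behind C\u{a}ld\u{a}raru's negative answer to (Q3)).

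\textbf{Step 1 (Reduction).} I claim it suffices to find $A, B \in \Db(X \times X)$ and a nonzero class $\eta \in \Hom_{\Db(X \times X)}(B, A[1])$ whose image under the functor $\FM[X \to X]{\farg}$ is zero, i.e.\ the induced natural transformation $\FM{B} \to \FM{A}[1]$ vanishes. Given such $\eta$, complete it to a distinguished triangle
\[
A \longrightarrow \ke_2 \longrightarrow B \xrightarrow{\eta} A[1],
\]
and set $\ke_1 := A \oplus B$. Applying $\FM[X \to X]{\farg}$ produces a triangle of functors whose connecting transformation is zero, so it splits: $\FM{\ke_2} \cong \FM{A} \oplus \FM{B} \cong \FM{\ke_1}$. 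On the other hand, choosing $A$ and $B$ with appropriate rigidity—e.g.\ both indecomposable, non-isomorphic, with $\Hom(A, B) = 0$ and $\End(B)$ local—ensures that any isomorphism $\ke_2 \cong A \oplus B$ would force the above triangle to split in $\Db(X \times X)$, giving $\eta = 0$ and contradicting our choice. Hence $\ke_1 \not\cong \ke_2$.

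\textbf{Step 2 (Construction on an elliptic curve).} For $X$ an elliptic curve, the triviality of $\omega_X$ (making Serre duality symmetric and Ext groups abundant) together with the self-duality of $X$ via the Poincar\'e bundle provide a rich supply of candidate $(A,B)$. A natural choice is to take $A, B$ built from structure sheaves of the diagonal, graphs of translation automorphisms $t_p \colon X \to X$, or products of line bundles of degree zero. By the K\"unneth formula, $\Ext^1(B, A)$ on $X \times X$ decomposes as a direct sum of tensor products of $\Ext$-groups on $X$. On the other hand, natural transformations $\FM{B} \to \FM{A}[1]$ can be detected by evaluation on a generating set of test objects (closed points together with $\ko_X$ and its shifts), whose images under $\FM{A}$ and $\FM{B}$ are easy to describe since the kernels are built from translates and line bundles. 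Using that a nontrivial line bundle of degree zero on $X$ has vanishing cohomology, one computes both spaces explicitly and a dimension count should reveal that the natural transformation space is a proper subspace of $\Ext^1(B, A)$. Any element of the kernel of this comparison map furnishes the required $\eta$.

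\textbf{Main obstacle.} The principal difficulty lies in Step 2: one must choose $A, B$ so that the map $\Ext^1(B, A) \to \mathrm{Nat}(\FM{B}, \FM{A}[1])$ is not injective, and then exhibit an explicit element in its kernel. The vanishing of $\FM{\eta}$ has to be verified uniformly over all test objects, not just on closed points, and tracking this through the K\"unneth decomposition is the delicate part. The elliptic case is what makes the comparison feasible, since K\"unneth reduces everything to one-dimensional Ext computations on $X$, where the triviality of $\omega_X$ and the explicit description of $\Pic^0(X) = X$ allow the mismatch between Ext on the product and natural transformations of functors to be made visible.
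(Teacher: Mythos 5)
Your Step 1 contains a genuine gap, and it sits exactly where the real content of the theorem lies. From $\FM{\eta}=0$ you conclude that ``the triangle of functors splits'', hence $\FM{\ke_2}\iso\FM{A}\oplus\FM{B}$. But $\ExFun(\Db(X),\Db(X))$ carries no triangulated structure making $\FM[X\to X]{\farg}$ exact (this is question (Q5), answered negatively in Proposition \ref{prop:notria} in closely related cases), so there is no triangle of functors to split. What $\FM{\eta}=0$ actually yields is only an objectwise splitting $\FM{\ke_2}(\ka)\iso\FM{A}(\ka)\oplus\FM{B}(\ka)$ for each $\ka$; assembling these splittings into a \emph{natural} isomorphism is obstructed in general, because the cone is not functorial. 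Lemma \ref{lem:isoobj} is a counterexample of precisely the shape you propose: on $\PP^1$, with $A=\Delta_*\omega_{\PP^1}^{\otimes 2}[1]$, $B=\ko_\Delta$ and $0\neq\eta=\alpha[1]$, the connecting natural transformation vanishes identically (on a sheaf it is a class in an $\Ext^2$ group on a curve, and every object of $\Db(\PP^1)$ splits into shifted sheaves), the two functors are isomorphic on every object, and yet $\FM{\cone{\alpha}}\not\iso\FM{A\oplus B}$ by the uniqueness of kernels over $\PP^1$ (Lemma \ref{lem:uniqPP1}). So the reduction in Step 1 is false as stated, and no choice of $A$, $B$, $\eta$ in Step 2 can repair it without a new idea.

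For comparison, the paper's kernels are the simplest possible: $\ke_1=\ko_\Delta\oplus\ko_\Delta[1]$ and $\ke_2=\cone{\alpha}$ for any $0\neq\alpha\in\Hom(\ko_\Delta[-1],\ko_\Delta[1])\iso\Hom(\ko_\Delta,\ko_\Delta)\dual$, which is nonzero by Serre duality since $\omega_{X\times X}$ is trivial for an elliptic curve. Thus the search you describe in Step 2 is unnecessary --- producing a nonzero $\eta$ killed by $\FM[X\to X]{\farg}$ is the easy half (it is essentially the non-faithfulness of Proposition \ref{nonfaith}), and $\ke_1\not\iso\ke_2$ follows from Lemma \ref{trivcone} with no rigidity hypotheses on the summands. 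The hard half, carried out in \cite{CS1} and omitted from your proposal, is the explicit construction of an isomorphism of functors $\FM{\ke_1}\iso\FM{\ke_2}$ (compatible with shifts and natural in the argument); this uses the specific geometry of the elliptic curve and is exactly what separates it from the $\PP^1$ situation, where the analogous pair of kernels gives non-isomorphic functors.
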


There is no space to explain the proof of this result in detail. Let us just mention how the two kernels are defined. By Serre duality,
\[
0\neq\Hom(\ko_\Delta,\ko_\Delta)\dual\iso\Hom(\ko_\Delta[-1],\ko_\Delta[1]),
\]
where $\ko_\Delta=\Delta_*\ko_X\in\Db(X\times X)$. For $0\neq\alpha\in\Hom(\ko_\Delta[-1],\ko_\Delta[1])$, we set
\[
\ke_1:=\ko_\Delta\oplus\ko_\Delta[1]\qquad\ke_2:=\cone{\alpha}.
\]

It makes then perfect sense to pose the following.

\begin{pn}\label{pn:anygenus}
Extend the non-uniqueness result in Theorem \ref{thm:nonuniq} to any
curve of genus $\ge1$.
\end{pn}

\smallskip

In \cite{CS1} we provided our best approximation to the uniqueness of the Fourier--Mukai kernels.

\begin{thm}\label{thm:cohm}{\bf (\cite{CS1}, Theorem 1.2.)}
Let $X_1$ and $X_2$ be projective schemes and let
$\fun{F}\colon\Dp(X_1)\to\Db(X_2)$ be an exact functor. If
$\fun{F}\iso\FM{\ke}$ for some $\ke\in\Db(X_1\times X_2)$, then the
cohomology sheaves of $\ke$ are uniquely determined (up to
isomorphism) by $\fun{F}$.
\end{thm}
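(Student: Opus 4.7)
The plan is to reconstruct each cohomology sheaf $\mathcal{H}^q(\ke)$ directly from $\fun{F}\iso\FM{\ke}$ via a relative Serre-type correspondence applied to the projection $p_2\colon X_1\times X_2\to X_2$. The main technical input is that the pushforward by $p_2$ of $\mathcal{H}^q(\ke)$ twisted by sufficiently many copies of an ample line bundle pulled back from $X_1$ can be extracted from $\fun{F}$, and these twisted pushforwards assemble into a graded module that determines $\mathcal{H}^q(\ke)$ uniquely.

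Concretely, fix an ample line bundle $L$ on $X_1$; then $p_1^*L$ is ample relative to $p_2$. Since $\ke\in\Db(X_1\times X_2)$ has only finitely many nonzero cohomology sheaves, relative Serre vanishing produces an integer $N$ such that
\[
R^pp_{2*}\bigl(\mathcal{H}^q(\ke)\otimes p_1^*L^{\otimes n}\bigr)=0 \quad \text{for all } n\ge N,\, p>0,\, q\in\ZZ.
\]
For such $n$, the line bundle $p_1^*L^{\otimes n}$ is flat, so $\ke\Lotimes p_1^*L^{\otimes n}\iso\ke\otimes p_1^*L^{\otimes n}$, and the Grothendieck hypercohomology spectral sequence
\[
E_2^{p,q}=R^pp_{2*}\bigl(\mathcal{H}^q(\ke)\otimes p_1^*L^{\otimes n}\bigr)\Longrightarrow\mathcal{H}^{p+q}\bigl(\fun{F}(L^{\otimes n})\bigr)
\]
degenerates at $E_2$, giving canonical isomorphisms $\mathcal{H}^q(\fun{F}(L^{\otimes n}))\iso p_{2*}(\mathcal{H}^q(\ke)\otimes p_1^*L^{\otimes n})$ for every $q\in\ZZ$ and every $n\ge N$. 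Replacing $L$ by a sufficiently large power, I may further assume $X_1\times X_2\iso\proj_{X_2}(R)$, where $R:=\bigoplus_{n\ge0}H^0(X_1,L^{\otimes n})\otimes_\K\ko_{X_2}\iso\bigoplus_{n\ge0}p_{2*}(p_1^*L^{\otimes n})$ is the relative homogeneous coordinate ring.

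The next step is to assemble the sheaves $\mathcal{H}^q(\fun{F}(L^{\otimes n}))$ into a finitely generated graded $R$-module
\[
M^q:=\bigoplus_{n\ge N}\mathcal{H}^q\bigl(\fun{F}(L^{\otimes n})\bigr).
\]
The multiplication structure is induced by $\fun{F}$: any $s\in H^0(X_1,L^{\otimes m})$ is a morphism $\ko_{X_1}\to L^{\otimes m}$ in $\Dp(X_1)$, yielding $L^{\otimes n}\to L^{\otimes n+m}$, thence $\fun{F}(L^{\otimes n})\to\fun{F}(L^{\otimes n+m})$, and on $\mathcal{H}^q$ a map $M^q_n\to M^q_{n+m}$. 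Under the identifications above, this agrees with tensoring $\mathcal{H}^q(\ke)$ by $p_1^*s$ and pushing down by $p_2$, so $M^q$ coincides with the graded $R$-module canonically associated to $\mathcal{H}^q(\ke)$ via the relative Serre correspondence. By that correspondence, $\mathcal{H}^q(\ke)$ is determined up to isomorphism by $M^q$. Since $M^q$ is built functorially from $\fun{F}$, any isomorphism of exact functors $\FM{\ke_1}\iso\FM{\ke_2}$ produces isomorphisms of graded $R$-modules $M^q_{\ke_1}\iso M^q_{\ke_2}$, hence $\mathcal{H}^q(\ke_1)\iso\mathcal{H}^q(\ke_2)$ for every $q$.

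The hardest point I expect is not any of the individual ingredients, which are all standard, but rather the naturality step at the end: one must verify that the isomorphism of exact functors descends to an honest isomorphism of graded $R$-modules and not merely to a collection of isomorphisms in each graded piece. This rests on the compatibility of natural transformations of exact functors with the morphisms $L^{\otimes n}\to L^{\otimes n+m}$ induced by sections of $L^{\otimes m}$, together with the functoriality of the hypercohomology spectral sequence in $\ke$; both are formal, but some bookkeeping is required to collect the data into a single graded-module isomorphism. A secondary subtlety is the ability to choose the integer $N$ uniformly in $q$, which crucially uses the boundedness of $\ke$.
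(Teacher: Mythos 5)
Your proof is correct and follows essentially the same route as the proof of this statement in \cite{CS1} (the survey itself states the result without proof): twist by high powers of an ample line bundle pulled back from $X_1$, use relative Serre vanishing to degenerate the hypercohomology spectral sequence for $\R(p_2)_*$, and recover each cohomology sheaf of $\ke$ from the resulting graded module via the relative Serre correspondence over $X_2$. This is the same machinery the survey deploys in the proof of Proposition \ref{prop:exab}, so there is nothing further to flag.
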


Using the discussion in Section \ref{subsubsec:cohomo} we can derive the following straightforward consequence from the above result. We will always assume that $X_1$ and $X_2$ are smooth projective varieties.

\begin{cor}\label{cor:Kcohom}
Let $\ke_1,\ke_2\in\Db(X_1\times X_2)$ be such that $\FM{\ke_1}\iso\FM{\ke_2}\colon\Db(X_1)\to\Db(X_2)$.
Then $[\ke_1]=[\ke_2]$ in $K(X_1\times X_2)$ and so $\FMK{\ke_1}=\FMK{\ke_2}$ and $\FMH{\ke_1}=\FMH{\ke_2}$.
\end{cor}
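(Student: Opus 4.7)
The plan is to derive everything from Theorem \ref{thm:cohm}, which does all the real work; the corollary is essentially a bookkeeping exercise translating the ``cohomology sheaves are determined'' statement down to $K$-theory and cohomology, so I will not expect to meet any genuine obstacle.

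First I would apply Theorem \ref{thm:cohm} to the functor $\fun{F} := \FM{\ke_1} \iso \FM{\ke_2}$. Both $\ke_1$ and $\ke_2$ represent $\fun{F}$ via the Fourier--Mukai construction, and both live in $\Db(X_1\times X_2)$, so the theorem gives isomorphisms $H^i(\ke_1)\iso H^i(\ke_2)$ in $\Coh(X_1\times X_2)$ for every $i\in\ZZ$. Because the complexes are bounded, only finitely many of these cohomology sheaves are nonzero.

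Next I would invoke the standard fact that, for any bounded complex $\ke$ on a noetherian scheme, one has the identity
\[
[\ke] = \sum_{i\in\ZZ}(-1)^{i}[H^{i}(\ke)]
\]
in the Grothendieck group (obtained by truncating and using the long exact sequences relating $\ke$, $\tau_{\leq i}\ke$, $\tau_{\geq i+1}\ke$ and $H^{i}(\ke)[-i]$). Combined with the isomorphisms from the previous step, this immediately yields $[\ke_{1}]=[\ke_{2}]$ in $K(X_{1}\times X_{2})$.

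Finally, I would observe that both $\FMK{\ke}$ and $\FMH{\ke}$ factor through the class $[\ke]\in K(X_{1}\times X_{2})$. For $\FMK{\ke}$ this is immediate from the very definition $\FMK{[\ke]}(e)=(p_{2})_{*}([\ke]\cdot p_{1}^{*}(e))$, in which $\ke$ enters only through its $K$-theory class. For $\FMH{\ke}=\FMH{v([\ke])}$, note that the Mukai vector $v([\ke])=\mathrm{ch}([\ke])\cdot\sqrt{\mathrm{td}(X_{1}\times X_{2})}$ depends only on $[\ke]$, because the Chern character factors through $K$-theory. Hence $[\ke_{1}]=[\ke_{2}]$ forces $\FMK{\ke_{1}}=\FMK{\ke_{2}}$ and $\FMH{\ke_{1}}=\FMH{\ke_{2}}$, completing the proof.
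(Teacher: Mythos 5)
Your proposal is correct and follows exactly the route the paper intends: the paper gives no explicit proof but states that the corollary is a straightforward consequence of Theorem \ref{thm:cohm} together with the discussion in Section \ref{subsubsec:cohomo}, which is precisely your argument (equal cohomology sheaves, the Euler-characteristic identity $[\ke]=\sum_i(-1)^i[H^i(\ke)]$ in $K(X_1\times X_2)$, and the fact that $\FMK{\ke}$ and $\FMH{\ke}$ factor through $[\ke]$).
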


\subsection{The remaining questions (Q3)--(Q5)}\label{subsec:Q}

Let us first consider the case of smooth projective curves.
	
\begin{prop}\label{nonfaith}{\bf (\cite{CS1}, Proposition 2.3.)}
Set $d_i:=\dim(X_i)$. If $\min\{d_1,d_2\}=1$, then $\FM[X_1\to
X_2]{\farg}$ is neither faithful nor full.
\end{prop}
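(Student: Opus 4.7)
The plan is to first reduce to the case $X_1=C$, a smooth projective curve, and then exhibit separate witnesses for non-faithfulness and non-fullness. The reduction is immediate from the symmetry argument given at the start of Subsection \ref{subsec:Q2}: the anti-equivalence $\ke\mapsto\ke\dual\otimes p_1^*\sh[d_1]{\ds_{X_1}}$ together with the passage to right adjoints identifies $\FM[X_1\to X_2]{\farg}$ with the opposite functor of $\FM[X_2\to X_1]{\farg}$, so all of (Q1)--(Q5) are insensitive to swapping the two factors. I may therefore assume $d_1=1$ and write $C=X_1$, $X=X_2$.

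For non-faithfulness, the plan is to produce a non-zero morphism of kernels $\alpha\colon\ke_1\to\ke_2$ in $\Db(C\times X)$ whose image under $\FM[C\to X]{\farg}$ is the zero natural transformation. The guiding idea is borrowed from the construction underlying Theorem \ref{thm:nonuniq}: on an elliptic curve $E$ the non-zero Serre-dual class in $\Hom(\ko_\Delta[-1],\ko_\Delta[1])$ was used to build non-isomorphic kernels sharing a Fourier--Mukai transform. I plan to adapt this to $C\times X$ by taking $\ke$ to be the structure sheaf of a well-chosen closed subscheme (for instance the graph of a morphism $C\to X$, or the product $\{p\}\times X$ for a closed point $p\in C$) and to invoke Serre duality on $C\times X$ together with the one-dimensionality of $C$ to produce a non-trivial class $\alpha\colon\ke\to\ke[n]$ whose induced natural transformation factors as a map between complexes whose cohomological supports never overlap in degree, so that its evaluation on every object of $\Db(C)$ is zero.

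For non-fullness, the plan exploits that natural transformations in $\ExFun(\Db(C),\Db(X))$ are only required to commute with shifts, not with cones; so for an object $\kf$ sitting in a non-trivial triangle the evaluation $\eta_\kf$ admits extra degrees of freedom not visible to any global morphism of kernels. Concretely, I choose kernels $\ke_1,\ke_2$ on $C\times X$ for which a K\"unneth decomposition makes both $\Hom_{\Db(C\times X)}(\ke_1,\ke_2)$ and $\mathrm{Nat}(\FM{\ke_1},\FM{\ke_2})$ computable, and build an explicit natural transformation via a twisted cone construction: one starts from a prescribed value $\eta_{\kf_0}$ on a generator $\kf_0$ of $\Db(C)$ and extends to the rest of $\Db(C)$ by a non-canonical triangle morphism, obtaining a bona fide exact natural transformation that cannot be expressed as $\FM{\beta}$ for any $\beta\colon\ke_1\to\ke_2$.

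The main obstacle, in both parts, is verifying that the constructed witnesses actually have the required properties: for non-faithfulness, that the induced transformation $\FM[C\to X]{\alpha}$ genuinely vanishes on all of $\Db(C)$ and not merely on the test objects used to motivate the construction; for non-fullness, that the twisted natural transformation truly fails to lift to any morphism of kernels. Both verifications rest on a precise accounting of how the K\"unneth decomposition of $\Hom$-spaces on $C\times X$ interacts with the cohomological amplitude $\le 1$ of $\R\Gamma(C,\farg)$, which is what creates the extra room---on the kernel side for non-faithfulness, and on the functor side for non-fullness---exploited by the construction.
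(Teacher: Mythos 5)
Your reduction to $X_1=C$ a curve and your non-faithfulness witness follow essentially the paper's own route: one picks a nonzero kernel $\ke$ whose transform sends sheaves to sheaves (the paper takes $\FM{\ke}\iso g^*\comp f_*$ for a finite $f\colon C\to\PP^{d_2}$ and a finite flat $g\colon X_2\to\PP^{d_2}$; a graph kernel works just as well), and Serre duality on the $(1+d_2)$-dimensional product yields $0\neq\alpha\in\Hom(\ke,\ke\otimes\ds_{X_1\times X_2}[1+d_2])$, whose evaluation on a sheaf is a morphism from a sheaf to a sheaf shifted by $1+d_2>\dim X_2$ and hence vanishes. Be careful, though, with your claim that the induced transformation vanishes on \emph{every} object of $\Db(C)$ because ``the cohomological supports never overlap in degree'': for a complex of amplitude larger than $d_2$ the degree gap closes and that literal statement is false. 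The correct last step is that $\Coh(C)$ is hereditary, so every object of $\Db(C)$ splits as a direct sum of shifted sheaves, and vanishing on sheaves therefore propagates to everything; this is what the paper invokes, and it is what your remark about the amplitude of $\R\Gamma(C,\farg)$ should be replaced by.

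The non-fullness half is where the genuine gap lies. Prescribing a transformation on a generator of $\Db(C)$ and ``extending by a non-canonical triangle morphism'' does not in general produce a natural transformation at all: naturality against \emph{all} morphisms of $\Db(C)$, not just shifts, is exactly the condition such cone-twisting constructions fail to satisfy, and you offer no mechanism for checking it, nor concrete kernels for which the asserted K\"unneth computation of the space of natural transformations could be carried out. The paper's argument is entirely different and rests on a hard input: by Theorem \ref{thm:nonuniq} there exist, on an elliptic curve, kernels $\ke_1\niso\ke_2$ together with an isomorphism $\psi\colon\FM{\ke_1}\isomor\FM{\ke_2}$; if $\psi=\FM[X\to X]{f}$ for some $f\colon\ke_1\to\ke_2$, then the cone $\kg$ of $f$ satisfies $\FM{\kg}\iso0$, hence $\kg\iso0$ by the uniqueness statement in Theorem \ref{thm:CS1}, so $f$ is an isomorphism, contradicting $\ke_1\niso\ke_2$. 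In other words, the failure of fullness is detected not by exotic natural transformations but by an isomorphism of functors that cannot be lifted to the kernels; without Theorem \ref{thm:nonuniq} (or a substitute for it) your plan contains no actual witness for non-fullness.
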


\begin{proof}
We give a full proof only of the non-faithfulness, as it plays a role in the study of (Q5) below. As above, we can
assume that $1=d_1\le d_2$. Hence take a finite morphism $f\colon
X_1\to\PP^{d_2}$ and a finite and surjective (hence flat) morphism
$g\colon X_2\to\PP^{d_2}$. Then $\fun{F}:=g^*\comp
f_*\colon\Coh(X_1)\to\Coh(X_2)$ is an exact functor, which trivially
extends to an exact functor again denoted by
$\fun{F}\colon\Db(X_1)\to\Db(X_2)$. Clearly there exists
$0\niso\ke\in\Db(X_1\times X_2)$ such that $\fun{F}\iso\FM{\ke}$ (see Example \ref{ex:FM} and Proposition \ref{prop:FMpropert}).

Now observe that, by Serre duality,
\[
\Hom_{\Db(X_1\times X_2)}(\ke,\ke)\iso\Hom_{\Db(X_1\times X_2)}
(\ke,\ke\otimes\sh[1+d_2]{\ds_{X_1\times X_2}})\dual,
\]
so there exists $0\ne\alpha\in\Hom_{\Db(X_1\times X_2)}
(\ke,\ke\otimes\sh[1+d_2]{\ds_{X_1\times X_2}})$. Since
$\ds_{X_1\times X_2}\iso p_1^*\ds_{X_1}\otimes p_2^*\ds_{X_2}$, this
induces, for any $\s{F}\in\Coh(X_1)$, a morphism
\[
\FM{\alpha}(\s{F})\colon\FM{\ke}(\s{F})\iso\fun{F}(\s{F})\to
\FM{\ke\otimes\sh[1+d_2]{\ds_{X_1\times X_2}}}(\s{F})\iso
\fun{F}(\s{F}\otimes\ds_{X_1})\otimes\sh[1+d_2]{\ds_{X_2}}.
\]
As $\fun{F}(\s{F})$ and $\fun{F}(\s{F}\otimes\ds_{X_1})$ are objects
of $\Coh(X_2)$, it follows that $\FM{\alpha}(\s{F})=0$, whence
$\FM{\alpha}=0$ because every object of $\Db(X_1)$ is isomorphic to
the direct sum of its (shifted) cohomology sheaves (since the abelian
category $\Coh(X_1)$ is hereditary).

As for non-fullness, we prove it only when $X_1=X_2=X$ is an elliptic
curve and $\K$ is algebraically closed. By Theorem \ref{thm:nonuniq}
there exist $\ke_1,\ke_2\in\Db(X\times X)$ with $\ke_1\not\iso\ke_2$
and an isomorphism $\psi\colon\FM{\ke_1}\isomor\FM{\ke_2}$. Then we
claim that there is no morphism $f\colon\ke_1\to\ke_2$ such that
$\psi=\FM[X\to X]{f}$. Indeed, assume that such an $f$ exists. Then it
can be completed to a distinguished triangle
\[
\xymatrix{
\ke_1\ar[r]^-{f}&\ke_2\ar[r]&\kg,
}
\]
for some $\kg\in\Db(X_1\times X_2)$. By assumption $\FM{\kg}(\ka)=0$,
for all $\ka\in\Db(X_1)$. Therefore $\FM{\kg}\iso0$, whence $\kg\iso0$
by Theorem \ref{thm:CS1}. But then $f$ would be an isomorphism,
contradicting the assumption $\ke_1\not\iso\ke_2$.
	\end{proof}
	
We finally recall how (Q5) is studied in \cite{CS1}. For this we need
a couple of easy lemmas.

\begin{lem}\label{trivcone}
Let $\cat{T}$ be a $\Hom$-finite triangulated category and let $f\colon
A\to B$ be a morphism of $\cat{T}$. Then $\cone{f}\iso\sh{A}\oplus
B$ if and only if $f=0$.
\end{lem}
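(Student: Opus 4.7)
The plan is to prove the two implications separately. For $(\Leftarrow)$, when $f=0$ I would observe that the triangle $A\xrightarrow{0}B\to\sh{A}\oplus B\to\sh{A}$, with the standard split inclusion of $B$ and projection onto $\sh{A}$, is distinguished (being the direct sum of the trivial triangles $A\to 0\to\sh{A}\xrightarrow{\id}\sh{A}$ and $0\to B\xrightarrow{\id}B\to 0$), so by uniqueness of the cone up to isomorphism $\cone{f}\iso\sh{A}\oplus B$.

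For $(\Rightarrow)$, I would set $C:=\cone{f}$ and fix a distinguished triangle $A\xrightarrow{f}B\xrightarrow{g}C\xrightarrow{h}\sh{A}$. The main step is to apply the cohomological functor $\Hom(-,B)$ to obtain the exact sequence
\[
\Hom(\sh{A},B)\xrightarrow{h^*}\Hom(C,B)\xrightarrow{g^*}\Hom(B,B)\xrightarrow{f^*}\Hom(A,B),
\]
in which $f^*$ is precomposition with $f$, so that in particular $f^*(\id_B)=f$. By $\Hom$-finiteness, the four terms are finite-dimensional $\K$-vector spaces. The hypothesis $C\iso\sh{A}\oplus B$ yields the dimension identity
\[
\dim\Hom(C,B)=\dim\Hom(\sh{A},B)+\dim\Hom(B,B),
\]
while exactness of the sequence above supplies
\[
\dim\Hom(C,B)=\dim\im(h^*)+\dim\ker(f^*)\le\dim\Hom(\sh{A},B)+\dim\Hom(B,B).
\]
Equating the two expressions forces $\ker(f^*)=\Hom(B,B)$, i.e.\ $f^*=0$; evaluating at $\id_B$ then gives $f=0$.

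The only mildly subtle point, and hence the main obstacle, is that the hypothesis delivers only an object-level isomorphism $\cone{f}\iso\sh{A}\oplus B$ carrying no a priori compatibility with the structural maps $g$ and $h$ of the triangle, so there is no direct way to extract a section of $g$ from it. The role of $\Hom$-finiteness in the argument is precisely to upgrade the direct-sum dimension equality into the forced collapse $f^*=0$ of the long exact sequence. The choice of applying $\Hom(-,B)$ (rather than one of the other three Hom functors into or out of $A$ or $B$) is also essential, since it is the one for which $\id_B$ lies in the relevant slot and is transported by $f^*$ to $f$ itself.
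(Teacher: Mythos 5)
Your proof is correct and follows essentially the same route as the paper: the paper also applies the cohomological functor $\Hom(-,B)$ to the triangle $A\mor{f}B\to\sh{A}\oplus B\to\sh{A}$ and concludes that the map $(-)\comp f\colon\Hom(B,B)\to\Hom(A,B)$ vanishes ``for dimension reasons,'' which is exactly the count you spell out. Your version is just more explicit about that dimension count and about the (well-known) converse implication.
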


\begin{proof}
The other implication being well-known, we assume that
$\cone{f}\iso\sh{A}\oplus B$. Applying the cohomological functor
$\Hom(\farg,B)$ to the distinguished triangle $A\mor{f}B\to
\sh{A}\oplus B\to\sh{A}$, one gets an exact sequence of finite
dimensional $\K$-vector spaces
\[\Hom(\sh{A},B)\to\Hom(\sh{A}\oplus B,B)\to\Hom(B,B)
\mor{(\farg)\comp f}\Hom(A,B).\]
For dimension reasons, the last map must be $0$, hence $f=0$.
\end{proof}

\begin{lem}\label{injfaith}
Let $\fun{F}\colon\cat{T}\to\cat{T}'$ be an exact functor between
triangulated categories and assume that $\cat{T}$ is
$\Hom$-finite. If $\fun{F}$ is essentially injective, then $\fun{F}$
is faithful, too.
\end{lem}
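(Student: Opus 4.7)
The plan is to reduce faithfulness to the statement of Lemma \ref{trivcone} via essential injectivity. Suppose $f\colon A\to B$ is a morphism in $\cat{T}$ with $\fun{F}(f)=0$; I want to conclude $f=0$.

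First I would complete $f$ to a distinguished triangle $A\mor{f}B\to C\to\sh{A}$ in $\cat{T}$ and apply $\fun{F}$ to obtain a distinguished triangle $\fun{F}(A)\mor{0}\fun{F}(B)\to\fun{F}(C)\to\sh{\fun{F}(A)}$ in $\cat{T}'$. Since the connecting morphism is zero, this triangle splits, giving $\fun{F}(C)\iso\fun{F}(B)\oplus\sh{\fun{F}(A)}$. Because $\fun{F}$ is additive and exact, $\sh{\fun{F}(A)}\iso\fun{F}(\sh{A})$ and the right-hand side is naturally isomorphic to $\fun{F}(B\oplus\sh{A})$, so $\fun{F}(C)\iso\fun{F}(B\oplus\sh{A})$.

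Now I invoke essential injectivity of $\fun{F}$ to deduce $C\iso B\oplus\sh{A}$ in $\cat{T}$. Finally, since $\cat{T}$ is $\Hom$-finite, Lemma \ref{trivcone} applied to the morphism $f$ (whose cone is isomorphic to $B\oplus\sh{A}$) forces $f=0$, as required.

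No step in this argument looks delicate: the splitting of a triangle with zero connecting map is standard, the compatibility of $\fun{F}$ with shifts and direct sums is built into the definition of an exact functor between triangulated categories, and the final appeal is a direct application of Lemma \ref{trivcone}. The only point that requires any care is being explicit that the cone of $\fun{F}(f)=0$ really is $\fun{F}(B)\oplus\sh{\fun{F}(A)}$ (a one-line verification using the definition of direct sum in a triangulated category), so that essential injectivity can be applied to an honest isomorphism of objects rather than a weaker identification.
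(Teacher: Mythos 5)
Your proof is correct and coincides with the paper's own argument: both identify $\fun{F}(\cone{f})\iso\cone{\fun{F}(f)}\iso\sh{\fun{F}(A)}\oplus\fun{F}(B)\iso\fun{F}(\sh{A}\oplus B)$, use essential injectivity to get $\cone{f}\iso\sh{A}\oplus B$, and conclude via Lemma \ref{trivcone}. Nothing further is needed.
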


\begin{proof}
Let $f\colon A\to B$ be a morphism of $\cat{T}$ such that
$\fun{F}(f)=0$. Then
\[\fun{F}(\cone{f})\iso\cone{\fun{F}(f)}\iso
\sh{\fun{F}(A)}\oplus\fun{F}(B)\iso\fun{F}(\sh{A}\oplus B)\] in
$\cat{T}'$, whence $\cone{f}\iso\sh{A}\oplus B$ in $\cat{T}$ because
$\fun{F}$ is essentially injective. It follows from Lemma
\ref{trivcone} that $f=0$.
\end{proof}

Recollecting the above results, we get the following.

\begin{prop}\label{prop:notria}{\bf (\cite{CS1}, Corollary 2.7.)}
If $d_1,d_2>0$ and $X_1$ or $X_2$ is $\PP^1$, then there is no
triangulated structure on $\ExFun(\Db(X_1),\Db(X_2))$
such that $\FM[X_1\to X_2]{\farg}$ is exact.
\end{prop}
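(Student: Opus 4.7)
The plan is to obtain a contradiction by combining essential injectivity of $\FM[X_1\to X_2]{\farg}$ (which is available under the $\PP^1$-hypothesis) with non-faithfulness (which is available under the curve-dimension hypothesis), using Lemma \ref{injfaith} as the bridge between the two.

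First I would observe that the assumptions of the proposition activate two earlier results simultaneously. Since $X_1$ or $X_2$ is $\PP^1$, Lemma \ref{lem:uniqPP1} gives that $\FM[X_1\to X_2]{\farg}$ is essentially injective. On the other hand, since both $d_1$ and $d_2$ are positive and at least one of them equals $1$ (because $\dim\PP^1=1$), we have $\min\{d_1,d_2\}=1$, so Proposition \ref{nonfaith} applies and tells us that $\FM[X_1\to X_2]{\farg}$ is \emph{not} faithful.

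Next I would argue for a contradiction. Suppose $\ExFun(\Db(X_1),\Db(X_2))$ admits a triangulated structure making $\FM[X_1\to X_2]{\farg}$ exact. The source category $\Db(X_1\times X_2)$ is $\Hom$-finite because $X_1\times X_2$ is smooth projective. Hence Lemma \ref{injfaith}, applied to the exact functor $\FM[X_1\to X_2]{\farg}$ with $\cat{T}=\Db(X_1\times X_2)$, forces $\FM[X_1\to X_2]{\farg}$ to be faithful. This directly contradicts the conclusion of Proposition \ref{nonfaith} drawn in the previous paragraph, and the proposition follows.

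There is no genuine obstacle here: the entire argument is a three-line combination of Lemmas \ref{lem:uniqPP1} and \ref{injfaith} with Proposition \ref{nonfaith}. The only point worth double-checking is the elementary verification that $\Db(X_1\times X_2)$ is $\Hom$-finite in order to apply Lemma \ref{injfaith}, which is standard for smooth projective varieties over $\K$ and does not require the $\PP^1$-assumption.
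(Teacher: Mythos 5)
Your argument is correct and coincides with the paper's own proof: both combine essential injectivity from Lemma \ref{lem:uniqPP1} with non-faithfulness from Proposition \ref{nonfaith}, and use Lemma \ref{injfaith} to rule out an exact structure. The extra check that $\Db(X_1\times X_2)$ is $\Hom$-finite is a reasonable point to make explicit, but the route is the same.
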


\begin{proof}
This follows from Lemma \ref{injfaith}, since we know that in this case
$\FM[X_1\to X_2]{\farg}$ is essentially injective by Lemma
\ref{lem:uniqPP1}, but not faithful by Proposition \ref{nonfaith}.
\end{proof}

Notice that, as observed in \cite{To}, there is no natural triangulated structure on the category $\ExFun(\Db(X_1),\Db(X_2))$. One can then pose the following question.

\begin{pn}\label{pn:morecases}
Understand whether there may be smooth projective varieties $X_1$ and $X_2$ of
positive dimension such that (Q5) has a positive answer.
\end{pn}
	
\section{Existence of Fourier--Mukai kernels and (Q1)}\label{sec:existence}

We are now ready to discuss the partial answers to (Q1) actually present in the literature. As we have already observed, we need to impose rather strong conditions on the exact functors in order to get nice results.

\subsection{The non-smooth case}\label{subsec:nonsmooth}

The idea of studying Fourier--Mukai functors between triangulated categories associated to singular varieties explained in the baby examples in Section \ref{subsec:goodnews} has been extensively analyzed in \cite{LO} using new ideas coming from dg-categories. Let us start from the following result.

\begin{prop}\label{prop:LO1}{\bf(\cite{LO}, Corollary 9.12.)}
Let $X_1$ and $X_2$ be quasi-compact separated schemes over
$\K$. Assume that $X_1$ has enough locally free sheaves and let
$\fun{F}\colon\Dp(X_1)\to\D(\Qcoh(X_2))$ be a fully faithful exact
functor that commutes with direct sums. Then there is an
$\ke\in\D(\Qcoh(X_1\times X_2))$ such that the functor $\FM{\ke}$ is
fully faithful and
\begin{equation}\label{eqn:FMobj}
\FM{\ke}(\ka)\iso\fun{F}(\ka)
\end{equation}
for any $\ka\in\Dp(X_1)$.	
\end{prop}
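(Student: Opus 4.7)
The plan is to combine a reduction to the full unbounded derived category with Toën's representability theorem for dg-functors (cf.\ Theorem \ref{thm:Toen} and the extensions mentioned in \cite{BZFN, Pr}).

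First, I would extend $\fun{F}$ to a cocontinuous exact functor on the unbounded derived category. Under the assumption that $X_1$ has enough locally free sheaves, $\D(\Qcoh(X_1))$ is compactly generated and its full subcategory of compact objects coincides with $\Dp(X_1)$. Any object of $\D(\Qcoh(X_1))$ can then be written as a homotopy colimit of perfect complexes, and since $\fun{F}$ commutes with arbitrary direct sums, defining $\tilde{\fun{F}}$ on such a homotopy colimit as the corresponding homotopy colimit of the images in $\D(\Qcoh(X_2))$ yields a cocontinuous exact functor $\tilde{\fun{F}}\colon\D(\Qcoh(X_1))\to\D(\Qcoh(X_2))$ extending $\fun{F}$. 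Fully faithfulness of $\fun{F}$ on a set of compact generators, together with cocontinuity on both sides, then propagates to fully faithfulness of $\tilde{\fun{F}}$ on all of $\D(\Qcoh(X_1))$.

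Second, I would pass to natural dg-enhancements (for instance those built from h-injective resolutions) of $\D(\Qcoh(X_i))$ and to their full dg-subcategories enhancing $\Dp(X_i)$, playing the role of $\Perf(X_i)$. The crucial step is to lift $\tilde{\fun{F}}$ to a quasi-functor between these dg-enhancements. This is where the fully faithfulness hypothesis is essential: one exploits rigidity properties of the dg-enhancement of a quasi-compact separated scheme with enough locally free sheaves to show that a cocontinuous fully faithful triangulated functor between such derived categories admits a quasi-functor lift, and that this lift is essentially unique. I expect this dg-lifting step to be the main obstacle; it is precisely the technical heart of \cite{LO} and the reason the fully faithfulness assumption cannot be removed without extra work.

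Finally, I would invoke Toën's representability theorem: the dg-category of cocontinuous quasi-functors from the dg-enhancement of $\D(\Qcoh(X_1))$ to that of $\D(\Qcoh(X_2))$ is quasi-equivalent to a dg-enhancement of $\D(\Qcoh(X_1\times X_2))$, with the equivalence realised by the Fourier--Mukai construction $\ke\mapsto\FM{\ke}$. The quasi-functor lifting $\tilde{\fun{F}}$ therefore corresponds to some $\ke\in\D(\Qcoh(X_1\times X_2))$ with $\FM{\ke}\iso\tilde{\fun{F}}$. Restricting this isomorphism to $\Dp(X_1)$ yields \eqref{eqn:FMobj}, while fully faithfulness of $\FM{\ke}$ on $\D(\Qcoh(X_1))$ is transported from that of $\tilde{\fun{F}}$ through the equivalence.
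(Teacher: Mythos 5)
Your overall architecture (dg-lift followed by To\"en's representability theorem) is the right one and matches the strategy of \cite{LO} sketched in Section \ref{subsec:ingredients}; note that the paper itself does not prove Proposition \ref{prop:LO1} but imports it from \cite{LO}. There are, however, genuine gaps in your reduction. First, your opening step --- extending $\fun{F}$ to a cocontinuous functor $\tilde{\fun{F}}$ on $\D(\Qcoh(X_1))$ by sending a homotopy colimit of perfect complexes to the homotopy colimit of their images --- is not well defined at the triangulated level: homotopy colimits are built from non-functorial cones, so this prescription does not yield a functor, let alone an exact one. This is exactly why the order of operations in \cite{LO} is the reverse of yours: one first lifts $\fun{F}$ on $\Dp(X_1)$ to a quasi-functor $\Perf(X_1)\to\Dg(X_2)$ (using uniqueness of enhancements, ample sequences and convolutions --- this is where full faithfulness enters) and only afterwards extends to the big categories, where extension by colimits is performed at the dg level (tensoring with a bimodule) and is unproblematic. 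Relatedly, your claim that full faithfulness of $\fun{F}$ on compact generators propagates to $\tilde{\fun{F}}$ by cocontinuity would require the objects $\fun{F}(P)$, for $P$ perfect, to be compact in $\D(\Qcoh(X_2))$, which is not among the hypotheses.

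Second, the ``dg-lifting step'' you defer is not bookkeeping: it is the entire content of the result, and the form in which it can actually be carried out dictates the form of the conclusion. What one obtains at this stage is a quasi-functor whose induced triangulated functor agrees with $\fun{F}$ only \emph{objectwise}; upgrading this to an isomorphism of functors is precisely the harder Theorem \ref{thm:LO} and needs the additional hypotheses there. If your lift really satisfied $H^0(\fun{G})\iso\fun{F}$ as functors, restriction would give an isomorphism of functors on $\Dp(X_1)$, which is strictly stronger than \eqref{eqn:FMobj} --- and Lemma \ref{lem:isoobj} shows that the distinction between objectwise isomorphism and isomorphism of functors is a real one. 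So as written, your argument either assumes more than can be established at this stage or claims to prove more than the proposition asserts.
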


Needless to say, the existence of the isomorphism \eqref{eqn:FMobj} is a rather weak condition because, already in the smooth case, it may not extend to an isomorphism of functors. To show that this is possible, consider the case of $\PP^1\times\PP^1$. Exactly as in Section \ref{subsec:Q2}, observe that, by Serre duality,
\[
0\neq\Hom(\ko_\Delta,\ko_\Delta)\dual\iso\Hom(\ko_\Delta[-1],\ko_\Delta\otimes\omega_{\PP^1\times\PP^1}[1]).
\]
Hence take a non-trivial $\alpha\colon\ko_\Delta[-1]\to\ko_\Delta\otimes\omega_{\PP^1\times\PP^1}[1]\iso\Delta_*\omega_{\PP^1}^{\otimes 2}[1]$ and consider the objects
\[
\ke_1:=\ko_\Delta\oplus\Delta_*\omega_{\PP^1}^{\otimes 2}[1]\qquad\ke_2:=\cone{\alpha}.
\]
Then one has the following easy result.

\begin{lem}\label{lem:isoobj}
	For every $\ka\in\Db(\PP^1)$ we have $\FM{\ke_1}(\ka)\iso\FM{\ke_2}(\ka)$ but $\FM{\ke_1}\not\iso\FM{\ke_2}$.
\end{lem}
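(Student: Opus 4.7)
The plan is to prove the objectwise isomorphism by establishing that the natural transformation $\FM{\alpha}$ vanishes on every object of $\Db(\PP^1)$, forcing the triangle induced by $\alpha$ on Fourier--Mukai functors to split pointwise; for the second claim I will combine the nontriviality of $\alpha$ (through Lemma \ref{trivcone}) with the essential injectivity of $\FM[\PP^1\to\PP^1]{\farg}$ (Lemma \ref{lem:uniqPP1}). To start, the defining distinguished triangle of $\ke_2$,
\[
\ko_\Delta[-1]\xrightarrow{\alpha}\Delta_*\omega_{\PP^1}^{\otimes 2}[1]\to\ke_2\to\ko_\Delta,
\]
yields, after applying $\FM{\farg}(\ka)$, a distinguished triangle in $\Db(\PP^1)$:
\[
\ka[-1]\xrightarrow{\FM{\alpha}(\ka)}\ka\otimes\omega_{\PP^1}^{\otimes 2}[1]\to\FM{\ke_2}(\ka)\to\ka.
\]
If $\FM{\alpha}(\ka)=0$ for every $\ka$, then this triangle splits, giving $\FM{\ke_2}(\ka)\iso\ka\oplus(\ka\otimes\omega_{\PP^1}^{\otimes 2}[1])\iso\FM{\ke_1}(\ka)$.

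The heart of the argument is the vanishing of $\FM{\alpha}(\ka)$. For a coherent sheaf $\kf\in\Coh(\PP^1)$, the morphism $\FM{\alpha}(\kf)$ lies in $\Hom_{\Db(\PP^1)}(\kf[-1],\kf\otimes\omega_{\PP^1}^{\otimes 2}[1])\iso\Ext^2(\kf,\kf\otimes\omega_{\PP^1}^{\otimes 2})$, and this $\Ext$ group vanishes because $\PP^1$ is one-dimensional. Since $\FM{\alpha}$ is a natural transformation between exact functors, it commutes with shifts, and so $\FM{\alpha}(\kf[n])=0$ for every sheaf $\kf$ and every $n\in\ZZ$. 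For a general $\ka\in\Db(\PP^1)$, the hereditariness of $\Coh(\PP^1)$ provides a splitting $\ka\iso\bigoplus_n\kh^n(\ka)[-n]$; naturality of $\FM{\alpha}$ applied to the corresponding inclusions and projections forces $\FM{\alpha}(\ka)$ to be block-diagonal with diagonal entries the vanishing maps $\FM{\alpha}(\kh^n(\ka)[-n])$, hence $\FM{\alpha}(\ka)=0$ as required.

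For the second claim, observe that $\alpha\neq 0$ by construction, so Lemma \ref{trivcone} (applicable since $\Db(\PP^1\times\PP^1)$ is $\Hom$-finite) guarantees that $\ke_2=\cone{\alpha}\not\iso\ko_\Delta\oplus\Delta_*\omega_{\PP^1}^{\otimes 2}[1]=\ke_1$. Lemma \ref{lem:uniqPP1} then shows that $\FM{\ke_1}\not\iso\FM{\ke_2}$, since $\FM[\PP^1\to\PP^1]{\farg}$ is essentially injective. The main obstacle, conceptually, is the middle step: a nontrivial morphism $\alpha$ of Fourier--Mukai kernels can induce the zero natural transformation, echoing the non-faithfulness phenomenon in Proposition \ref{nonfaith}; what makes it work in our setting is the combination of low cohomological dimension of $\PP^1$ and hereditariness of $\Coh(\PP^1)$, which together reduce the verification to the sheaf case.
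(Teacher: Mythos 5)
Your proof is correct and follows the same route as the paper: the second claim is obtained exactly as in the text from $\alpha\neq 0$, Lemma \ref{trivcone} and the essential injectivity of $\FM[\PP^1\to\PP^1]{\farg}$ (Lemma \ref{lem:uniqPP1}). For the first claim the paper merely declares the objectwise isomorphism ``obvious''; the argument you supply --- vanishing of $\FM{\alpha}$ on sheaves because $\Ext^2=0$ on a curve, then extension to all of $\Db(\PP^1)$ via hereditariness and naturality, exactly as in the proof of Proposition \ref{nonfaith} --- is the intended justification, and your care in not claiming $\Hom(\ka[-1],\ka\otimes\omega_{\PP^1}^{\otimes 2}[1])=0$ for arbitrary complexes $\ka$ is warranted, since that group need not vanish.
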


\begin{proof}
The existence of an isomorphism $\FM{\ke_1}(\ka)\iso\FM{\ke_2}(\ka)$ for any $\ka\in\Db(\PP^1)$ is obvious. The fact that $\FM{\ke_1}\not\iso\FM{\ke_2}$ follows from the uniqueness of Fourier--Mukai kernels for $\PP^1$ (see Lemma \ref{lem:uniqPP1}) and the fact that $\ke_1\not\iso\ke_2$.
\end{proof}

On the other hand, putting some more hypotheses on the schemes, we get a global isomorphism, as stated in the following theorem which is \cite[Cor.\ 9.13]{LO}. For a scheme $X$, denote by $T_0(\ko_X)$ the maximal $0$-dimensional torsion subsheaf of $\ko_{X}$.

\begin{thm}\label{thm:LO}{\bf(Lunts--Orlov)}
Let $X_1$ be a projective scheme with $T_0(\ko_{X_1})=0$ and
assume that $X_2$ is a noetherian separated scheme over $\K$. Given an exact fully
faithful functor $\fun{F}\colon\Dp(X_1)\to\Db(X_2)$,
there are an $\ke\in\Db(X_1\times X_2)$ and an isomorphism of exact functors
$\FM{\ke}\iso\fun{F}$.
\end{thm}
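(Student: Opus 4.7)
The plan is to first extract a candidate kernel via Proposition~\ref{prop:LO1}, improve it to an object of $\Db(X_1\times X_2)$ using Lemma~\ref{lem:boundcoh}, and finally upgrade the resulting object-wise isomorphism to an isomorphism of functors by invoking the uniqueness of dg-enhancements. The third step is the crux and is where the hypotheses that $X_1$ be projective with $T_0(\mathcal{O}_{X_1})=0$ enter decisively.

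For the first step I would compose $\fun{F}$ with the inclusion $\Db(X_2)\hookrightarrow\D(\Qcoh(X_2))$ to view it as taking values in $\D(\Qcoh(X_2))$. Since $X_1$ is projective it has enough locally free sheaves, and $\Dp(X_1)$ is a set of compact generators for $\D(\Qcoh(X_1))$. Standard compact-generation arguments then extend $\fun{F}$ to a direct-sum-preserving fully faithful exact functor $\tilde{\fun{F}}\colon\D(\Qcoh(X_1))\to\D(\Qcoh(X_2))$, so that Proposition~\ref{prop:LO1} produces an object $\mathcal{E}\in\D(\Qcoh(X_1\times X_2))$ such that $\FM{\mathcal{E}}$ is fully faithful and $\FM{\mathcal{E}}(A)\iso\fun{F}(A)$ for every $A\in\Dp(X_1)$.

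Next I would show $\mathcal{E}\in\Db(X_1\times X_2)$. Because $\fun{F}$ lands in $\Db(X_2)$, the Fourier--Mukai functor $\FM{\mathcal{E}}$ sends $\Dp(X_1)$ into $\Db(X_2)$. Lemma~\ref{lem:boundcoh} is stated for projective $X_i$, but its argument extends to the case in which $X_2$ is only noetherian and separated: one tests boundedness locally on $X_2$ and uses a set of perfect generators of $\Dp(X_1)$ built from twists of the structure sheaf, exploiting projectivity of $X_1$.

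The main obstacle is the third step. The isomorphism $\FM{\mathcal{E}}(A)\iso\fun{F}(A)$ supplied by Proposition~\ref{prop:LO1} is only object-wise, and Lemma~\ref{lem:isoobj} shows that such a family of isomorphisms need not assemble into an isomorphism of functors. To upgrade, I would fix standard dg-enhancements $\mathcal{A}_1$ of $\Dp(X_1)$ and $\mathcal{A}_2$ of $\Db(X_2)$, say via h-injective resolutions. The Fourier--Mukai functor $\FM{\mathcal{E}}$ lifts tautologically to a quasi-functor $\mathcal{A}_1\to\mathcal{A}_2$. The decisive input --- the Lunts--Orlov uniqueness theorem for dg-enhancements of $\Dp(X)$, which requires precisely the hypotheses $X$ projective and $T_0(\mathcal{O}_X)=0$ --- ensures that $\fun{F}$ also admits a dg-lift, unique up to isomorphism of quasi-functors. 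Combined with a To\"en-style description of quasi-functors as bimodules, in the spirit of Theorem~\ref{thm:Toen}, the object-wise isomorphism forces these two lifts to agree up to isomorphism of quasi-functors, and passing back to homotopy categories yields the desired natural isomorphism $\FM{\mathcal{E}}\iso\fun{F}$.
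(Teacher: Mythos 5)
Your first two steps (producing a candidate kernel via Proposition~\ref{prop:LO1} and cutting it down to $\Db(X_1\times X_2)$ via Lemma~\ref{lem:boundcoh}) follow the same architecture as the paper's outline in Section~\ref{subsec:ingredients}, but your third step contains a genuine gap. Uniqueness of enhancements (Theorem~\ref{thm:uniqen}) is a statement about the dg-category $\Perf(X_1)$, not about functors out of it: it does not guarantee that an arbitrary exact functor admits a dg-lift (for general functors this is precisely the open conjecture of \cite{BLL} recalled in Section~\ref{subsec:questions}; for fully faithful $\fun{F}$ it is the ``highly non-trivial'' construction of Lunts--Orlov, which uses full faithfulness to endow the essential image of $\fun{F}$ with an enhancement induced from $X_2$ before uniqueness can be invoked), and --- this is the fatal point --- it does not imply that two quasi-functors whose homotopy functors agree object-wise are isomorphic as quasi-functors. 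By Theorem~\ref{thm:Toen} an isomorphism of quasi-functors $\Perf(X_1)\to\Dg(X_2)$ amounts to an isomorphism of kernels, so your closing principle would force object-wise isomorphic Fourier--Mukai functors to have isomorphic kernels; Lemma~\ref{lem:isoobj} is a counterexample already for $X_1=X_2=\PP^1$. Hence the object-wise isomorphism supplied by Proposition~\ref{prop:LO1} cannot be upgraded by any such soft argument.

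What actually closes the gap in \cite{LO} is the pair of steps you omitted. One first produces an isomorphism $\theta_1\colon\fun{F}\rest{\cat{C}}\isomor\FM{\ke}\rest{\cat{C}}$ on the ample sequence $\cat{C}=\{\ko_{X_1}(iH)\}_{i\in\ZZ}$; this is where projectivity and the hypothesis $T_0(\ko_{X_1})=0$ genuinely enter, via Proposition~\ref{prop:ampleornot}, and not via uniqueness of enhancements (Theorem~\ref{thm:uniqen} requires no $T_0$ condition). One then extends $\theta_1$ to all perfect coherent sheaves by writing them as convolutions of direct sums of objects of $\cat{C}$ --- a step which uses the full faithfulness of $\fun{F}$ a second time --- and finally to all of $\Dp(X_1)$ by induction on the length of the interval of non-vanishing cohomologies. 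Without these two extension steps your argument produces a kernel but not the isomorphism of functors asserted in the theorem.
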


\begin{remark}\label{rmk:LO}
The kernel turns out to be unique in perfect analogy with Theorem
\ref{thm:Orlov}. This is observed in \cite{CS2}, following a
suggestion by Orlov.
\end{remark}

There is another approach to the Fourier--Mukai functors in the
non-smooth case due to Ballard.

\begin{thm}\label{thm:Ballard}{\bf(\cite{Ba1}, Theorem 1.2.)}
Let $X_1$ and $X_2$ be projective schemes such that $T_0(\ko_{X_1})=0$. If
$\fun{F}\colon\Dp(X_1)\to\Dp(X_2)$ is a fully faithful exact functor
with left and right adjoints, then there are an $\ke\in\Db(X_1\times
X_2)$ and an isomorphism of exact functors $\FM{\ke}\iso\fun{F}$.
\end{thm}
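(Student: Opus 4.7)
The plan is to deduce Theorem \ref{thm:Ballard} directly from the Lunts--Orlov theorem (Theorem \ref{thm:LO}). Composing $\fun{F}$ with the fully faithful inclusion $\iota\colon\Dp(X_2)\mono\Db(X_2)$ yields a fully faithful exact functor $\iota\comp\fun{F}\colon\Dp(X_1)\to\Db(X_2)$. Since any projective scheme over $\K$ is noetherian and separated, both $X_1$ and $X_2$ satisfy the standing hypotheses of Theorem \ref{thm:LO}, the remaining condition $T_0(\ko_{X_1})=0$ being part of our assumptions.

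Applying Theorem \ref{thm:LO} to $\iota\comp\fun{F}$ produces an object $\ke\in\Db(X_1\times X_2)$ together with an isomorphism of exact functors $\FM{\ke}\iso\iota\comp\fun{F}$, where a priori $\FM{\ke}\colon\Dp(X_1)\to\Db(X_2)$. Since $\iota\comp\fun{F}$ factors through the full triangulated subcategory $\Dp(X_2)\subseteq\Db(X_2)$, the same is true of $\FM{\ke}$: on objects this is immediate from the isomorphism, and on morphisms it follows from fullness of $\iota$. Restricting the codomain accordingly gives the desired isomorphism $\FM{\ke}\iso\fun{F}$ of exact functors $\Dp(X_1)\to\Dp(X_2)$.

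From this point of view the hypotheses about the existence of left and right adjoints and the restriction of the target to $\Dp(X_2)$ are redundant once Theorem \ref{thm:LO} is available, and the real content of the statement is hidden inside the proof of the latter. The principal obstacle, therefore, is not the short reduction above but the underlying Lunts--Orlov machinery, which invokes dg-enhancements and crucially uses the condition $T_0(\ko_{X_1})=0$ to guarantee uniqueness of enhancements of $\Dp(X_1)$; without that condition the reduction would still yield some object representing $\fun{F}$, but not necessarily one living in $\Db(X_1\times X_2)$. Ballard's original proof in \cite{Ba1} predates Lunts--Orlov and proceeds along a genuinely different route, exploiting the existence of the adjoints to build $\ke$ directly by applying a suitable extension of $\fun{F}$ to the structure sheaf of the diagonal of $X_1$; in that argument the adjunction hypotheses are essential, since they are what allow one to extend $\fun{F}$ to a continuous functor between the relevant unbounded derived categories and to control compactness of the resulting kernel.
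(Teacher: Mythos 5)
Your reduction is logically sound: a projective scheme over $\K$ is noetherian and separated, so composing $\fun{F}$ with the fully faithful inclusion $\Dp(X_2)\mono\Db(X_2)$ puts you squarely in the hypotheses of Theorem \ref{thm:LO}, and since $\Dp(X_2)$ is a strictly full subcategory of $\Db(X_2)$ the resulting isomorphism $\FM{\ke}\iso\iota\comp\fun{F}$ restricts to the codomain $\Dp(X_2)$. This is, however, a genuinely different route from the one the paper attributes to Ballard: the survey explicitly remarks that the proof in \cite{Ba1} ``does not make use of dg-categories and is closer to the spirit of the one of Theorem \ref{thm:Orlov}'', i.e.\ it is an adjoint-and-convolution argument in Orlov's style, which is exactly where the hypothesis on the existence of left and right adjoints does real work (in the singular setting adjoints are not automatic, as the paper notes, so Ballard must assume them). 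What your approach buys is economy --- the adjoint hypotheses become visibly redundant once Lunts--Orlov is available --- at the cost of importing the entire dg-enhancement machinery; what Ballard's approach buys is independence from that machinery. One small correction to your commentary: in the paper's account the condition $T_0(\ko_{X_1})=0$ enters through Proposition \ref{prop:ampleornot}, guaranteeing that the powers of an ample line bundle form an ample sequence in $\Coh(X_1)$, not through the uniqueness of enhancements; Theorem \ref{thm:uniqen} holds for any quasi-projective scheme without that hypothesis.
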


As remarked in \cite{Ba1}, contrary to the smooth case, the existence of the adjoints is not automatic at all. On the other hand, the proof of Theorem \ref{thm:Ballard} differs from the one of Theorem \ref{thm:LO} as it does not make use of dg-categories and is closer to the spirit of the one of Theorem \ref{thm:Orlov}.

\subsection{Some ingredients in the proof of Theorem \ref{thm:LO}}\label{subsec:ingredients}

A complete account of the details of the proof of Theorem \ref{thm:LO} is far beyond the scope of this paper. Nevertheless, there are at least three main steps in it which we want to highlight as they provide sources of interesting (and difficult) open problems.

\subsubsection{Dg-categories}\label{subsubsec:dg}

First one wants to find an object $\ke\in\Db(X_1\times X_2)$ to compare the functors $\fun{F}$ and $\FM{\ke}$. This is done by passing to dg-ehnacements and using a celebrated result of To\"{e}n.

\smallskip

Recall that a \emph{dg-category} is an additive category $\cat{A}$ such that, for all $A,B\in\ob(\cat{A})$, the morphism spaces $\Hom(A,B)$ are $\ZZ$-graded $\K$-modules with a differential $d\colon\Hom(A,B)\to\Hom(A,B)$ of degree $1$ compatible with the composition.

Given a dg-category $\cat{A}$ we denote by $H^0(\cat{A})$ its
\emph{homotopy} category. The objects of $H^0(\cat{A})$ are the same
as those of $\cat{A}$ while the morphisms are obtained by taking the
$0$-th cohomology $H^0(\Hom_{\cat{A}}(A,B))$ of the complex
$\Hom_{\cat{A}}(A,B)$. If $\cat{A}$ is pre-triangulated (see \cite{K}
for the definition), then $H^0(\cat{A})$ has a natural structure of
triangulated category.

A \emph{dg-functor} $\fun{F}\colon\cat{A}\to\cat{B}$ is the datum of a map $\mathrm{Ob}(\cat{A})\to\mathrm{Ob}(\cat{B})$ and of morphisms
of dg $\K$-modules $\Hom_{\cat{A}}(A,B)\to\Hom_{\cat{B}}(\fun{F}(A),\fun{F}(B))$, for $A,B\in\mathrm{Ob}(\cat{A})$, which are compatible with the composition and the units.

For a small dg-category $\cat{A}$, one can consider the
pre-triangulated dg-category $\dgMod{\cat{A}}$ of \emph{right dg
  $\cat{A}$-modules}. A right dg $\cat{A}$-module is a dg-functor
$\fun{M}\colon\cat{A}\opp\to\dgMod{\K}$, where $\dgMod{\K}$ is the
dg-category of dg $\K$-modules. The full dg-subcategory of acyclic
right dg-modules is denoted by $\ka c(\cat{A})$, and $H^0(\ka
c(\cat{A}))$ is a full triangulated subcategory of the homotopy
category $H^0(\dgMod{\cat{A}})$. Hence the \emph{derived category} of
the dg-category $\cat{A}$ is the Verdier quotient
\[
\Dg(\cat{A}):=H^0(\dgMod{\cat{A}})/H^0(\ka c(\cat{A})).
\]

According to \cite{K,To}, given two dg-categories $\cat{A}$ and $\cat{B}$, we denote by $\mathrm{rep}(\cat{A},\cat{B})$ the full subcategory of the derived category
$\Dg(\cat{A}\opp\otimes\cat{B})$ of $\cat{A}$-$\cat{B}$-bimodules $\fun{C}$ such that the functor
$(-)\otimes_{\cat{A}}\fun{C}\colon\Dg(\cat{A})\to\Dg(\cat{B})$  sends the representable $\cat{A}$-modules to objects which are isomorphic to representable $\cat{B}$-modules.
A \emph{quasi-functor} is an object in $\mathrm{rep}(\cat{A},\cat{B})$
which is represented by a dg-functor $\cat{A}\to\dgMod{\cat{B}}$ whose
essential image consists of dg $\cat{B}$-modules quasi-isomorphic to
representable $\cat{B}$-modules. Notice that a quasi-functor
$\fun{M}\in\mathrm{rep}(\cat{A},\cat{B})$ defines a functor
$H^0(\fun{M})\colon H^0(\cat{A})\to H^0(\cat{B})$.

Given two pre-triangulated dg-categories $\cat{A}$ and $\cat{B}$, a \emph{dg-lift} of an exact functor $\fun{F}\colon H^0(\cat{A})\to H^0(\cat{B})$ is a quasi-functor $\fun{G}\in\mathrm{rep}(\cat{A},\cat{B})$ such that $H^0(\fun{G})\iso\fun{F}$.

An \emph{enhancement} of a triangulated category $\cat{T}$ is a pair
$(\cat{A},\alpha)$, where $\cat{A}$ is a pre-triangulated
dg-category and $\alpha\colon H^0(\cat{A})\to\cat{T}$ is an
exact equivalence. The enhancement $(\cat{A},\alpha)$ of $\cat{T}$ is
\emph{unique} if for any enhancement $(\cat{B},\beta)$ of $\cat{T}$
there exists a quasi-functor $\gamma\colon\cat{A}\to\cat{B}$ such that
$H^0(\gamma)\colon H^0(\cat{A})\to H^0(\cat{B})$ is an exact equivalence.

\begin{ex}\label{ex:dgcat}
	For $X$ a quasi-compact quasi-separated scheme, let $\mathrm{C}^{\dg}(X)$ be the dg-category of unbounded complexes of objects in $\Qcoh(X)$. Denote by $\mathrm{Ac}^{\dg}(X)$ the full dg-subcategory of $\mathrm{C}^{\dg}(X)$ consisting of acyclic complexes. Following \cite{Dr}, we take the quotient $\Dg(X):=\mathrm{C}^{\dg}(X)/\mathrm{Ac}^{\dg}(X)$ which is again a dg-category. This dg-category $\Dg(X)$ is pre-triangulated and $H^0(\Dg(X))\iso\D(\Qcoh(X))$ (see \cite{K,To}). Therefore it is an enhancement of $\D(\Qcoh(X))$.
	
	Consider then the full dg-subcategory $\Perf(X)$ whose objects are all the perfect complexes in $\D(\Qcoh(X))$. It turns out (see, for example, \cite[Sect.\ 1]{LO}) that $\Perf(X)$ is an enhancement of $\Dp(X)$.
\end{ex}

The following result answers positively a conjecture in \cite{BLL}. The reader can have a look at \cite[Sect.\ 9]{LO} for stronger statements.

\begin{thm}\label{thm:uniqen}{\bf(\cite{LO}, Theorem 7.9.)}
	The triangulated category $\Dp(X)$ on a quasi-projective scheme $X$ has a unique enhancement.
\end{thm}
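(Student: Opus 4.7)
The plan is to reduce the uniqueness of dg-enhancements of $\Dp(X)$ to a statement about dg-endomorphism algebras of a classical generator, and to exploit that the reference enhancement $\Perf(X)$ from Example~\ref{ex:dgcat} is already available.

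First I would exploit the quasi-projectivity of $X$ to produce a compact object $E\in\Dp(X)$ which is a classical generator of $\Dp(X)$ (and of $\D(\Qcoh(X))$). For projective $X$ one can take a sum of twists of an ample line bundle as in Bondal--Van den Bergh, and the quasi-projective case is handled by the same construction. Given any enhancement $(\cat{A},\alpha)$ of $\Dp(X)$, pick an object $\widetilde E\in\cat{A}$ with $\alpha(\widetilde E)\iso E$, and consider the dg-algebra $R_{\cat{A}}:=\Hom^{\dg}_{\cat{A}}(\widetilde E,\widetilde E)$. The dg-functor $\Hom^{\dg}_{\cat{A}}(\widetilde E,\farg)\colon\cat{A}\to\dgMod{R_{\cat{A}}}$ sends $\widetilde E$ to the free rank-one module, is $H^0$-fully faithful at $\widetilde E$ by construction, and factors through $\Perf(R_{\cat{A}})$. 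Because $E$ classically generates $\Dp(X)$, a standard argument shows that this dg-functor induces an exact equivalence on $H^0$, hence a quasi-equivalence $\cat{A}\simeq\Perf(R_{\cat{A}})$.

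Applying the same construction to the reference enhancement $\Perf(X)$ and the same object $E$ gives a dg-algebra $R_{\Perf(X)}$ and a quasi-equivalence $\Perf(X)\simeq\Perf(R_{\Perf(X)})$. The problem therefore reduces to exhibiting a quasi-isomorphism of dg-algebras $R_{\cat{A}}\simeq R_{\Perf(X)}$, since any such quasi-isomorphism induces a quasi-equivalence between the two categories of perfect modules and hence a quasi-functor between $\cat{A}$ and $\Perf(X)$ inducing an equivalence on $H^0$. Both dg-algebras have the same graded cohomology $H^*(R)\iso\bigoplus_i\Hom_{\Dp(X)}(E,E[i])$, so the question is really about uniqueness of the dg-structure lifting this graded algebra.

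The main obstacle is precisely this last step: a priori the dg-structure on $R\End(E)$ can depend on the enhancement, and the obstructions to rigidifying it live in the Hochschild cohomology groups $\HH^{p}(H^*(R),H^*(R))$ of bidegree $2-p$ for $p\geq 3$ (governing the higher $A_\infty$-operations). My plan here is to follow the strategy of Lunts--Orlov: replace $\cat{A}$ and $\Perf(X)$ by their natural ``larger'' enhancements on $\D(\Qcoh(X))$ (using that compact objects lift well under any enhancement of a compactly generated triangulated category), embed both $R_{\cat{A}}$ and $R_{\Perf(X)}$ into a single dg-model of $\R\End(E)$ built from an h-injective resolution, and conclude via a zig-zag of dg-algebra quasi-isomorphisms. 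Once the zig-zag is in place, Theorem~\ref{thm:uniqen} follows by reading off a quasi-functor $\gamma\colon\cat{A}\to\Perf(X)$ (inverse to the chosen quasi-equivalences) whose $H^0$ is, by construction, isomorphic to $\alpha$ composed with the identity, hence an exact equivalence.
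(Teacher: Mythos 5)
The survey itself gives no proof of this statement --- it is quoted verbatim from Lunts--Orlov with a pointer to \cite[Sect.~7]{LO} --- so your attempt can only be measured against the argument in \cite{LO}. The first half of your plan is sound: quasi-projectivity does provide a single classical generator $E$ of $\Dp(X)$ (Bondal--Van den Bergh), and the derived Morita argument of Keller does yield, for any enhancement $(\cat{A},\alpha)$, a quasi-equivalence $\cat{A}\simeq\Perf(R_{\cat{A}})$ with $R_{\cat{A}}=\Hom^{\dg}_{\cat{A}}(\widetilde E,\widetilde E)$, using that $\Dp(X)$ is idempotent complete and that fully faithfulness propagates from $\widetilde E$ to its thick envelope. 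You also correctly isolate the crux: everything reduces to showing that the quasi-isomorphism class of $R_{\cat{A}}$ is independent of the enhancement.

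The gap is that your proposed resolution of that crux is not an argument. The dg-category $\cat{A}$ is handed to you purely abstractly, with no functor to complexes of sheaves; there is therefore no map from $R_{\cat{A}}$ into ``a single dg-model of $\R\End(E)$ built from an h-injective resolution'' --- producing such a comparison \emph{is} the content of the theorem, so the zig-zag you invoke presupposes what it is meant to prove. Nor can the $A_\infty$-obstructions you mention be killed by formality: a general quasi-projective scheme admits no tilting generator, so $H^*(R_{\cat{A}})$ is not concentrated in degree $0$ and the higher products are genuinely present. What Lunts--Orlov actually do is sidestep the single-generator picture entirely: quasi-projectivity is used to produce an infinite family of generators that are honest sheaves coming from an ample family, with one-sided vanishing of $\Ext$'s between them; the full dg-subcategory of \emph{any} enhancement on lifts of these generators is then connected, by an explicit zig-zag of truncations of the Hom-complexes, to the ordinary $\K$-linear category they span, which is manifestly enhancement-independent. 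The comparison quasi-functor is then extended from the generators to the big category $\D(\Qcoh(X))$ (viewed as the derived category of a Grothendieck category) and finally restricted to compact objects to obtain the statement for $\Dp(X)$. Without an idea of this kind --- a choice of generators whose dg-structure is forced by cohomological vanishing --- your outline cannot be completed.
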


\smallskip

Given a functor $\fun{F}\colon\Dp(X_1)\to\Db_{\Coh}(\Qcoh(X_2))$ as in the statement of Theorem \ref{thm:LO}, Lunts and Orlov construct in a highly non-trivial way a quasi-functor $\dgfun{F}\colon\Perf(X_1)\to\Dg(X_2)$. Now one can use the following.

\begin{thm}\label{thm:Toen}{\bf(\cite{To}, Theorem 8.9.)}
	Let $X_1$ and $X_2$ be quasi-compact and separated schemes over $\K$. Then we have a canonical quasi-equivalence
	\[
	\Dg(X_1\times X_2)\isomor\R\sHom_c(\Dg(X_1),\Dg(X_2)),
	\]
	where $\R\sHom_c$ denotes the dg-category formed by the direct sums preserving quasi-functors (i.e.\ their homotopy functors do).	
\end{thm}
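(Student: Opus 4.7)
The plan is to construct the quasi-functor from left to right by the Fourier--Mukai assignment at the dg-level, and then to verify it is a quasi-equivalence by reducing to a statement about modules over small dg-algebras, where a general Morita-theoretic result for dg-categories applies.

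First, one constructs a dg-functor
\[
\Dg(X_1\times X_2)\lto \R\sHom_c(\Dg(X_1),\Dg(X_2))
\]
that sends $\ke$ to the quasi-functor $\FMdg{\ke}(-):=\R(p_2)_*(p_1^*(-)\Lotimes\ke)$, defined at the dg-enhanced level by choosing appropriate h-flat and h-injective resolutions so that all the derived functors are strictly defined and composable. One then checks that each $\FMdg{\ke}$ preserves direct sums: $p_1^*$ is a left adjoint, $(-)\Lotimes\ke$ commutes with direct sums in the first variable, and $\R(p_2)_*$ commutes with arbitrary direct sums in $\D(\Qcoh(X_1\times X_2))$ because $X_2$ is quasi-compact and quasi-separated (B\"okstedt--Neeman).

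Second, one reduces the statement to a question about small dg-categories. By Bondal--Van den Bergh, for any quasi-compact separated $\K$-scheme $X$ the category $\D(\Qcoh(X))$ is compactly generated by a single perfect complex $G_X$. Setting $A_X:=\R\Hom^{\bullet}(G_X,G_X)$, computed at the dg-enhanced level, one obtains a quasi-equivalence $\Dg(X)\simeq\Dg(A_X)$ restricting to $\Perf(X)\simeq\Perf(A_X)$. Moreover $G_{X_1}\boxtimes G_{X_2}$ is a compact generator of $\D(\Qcoh(X_1\times X_2))$, and a flat base-change/K\"unneth argument provides a quasi-isomorphism of dg-algebras $A_{X_1\times X_2}\simeq A_{X_1}\otimes^{\mathbf L}_\K A_{X_2}$. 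Hence $\Dg(X_1\times X_2)\simeq\Dg(A_{X_1}\opp\otimes^{\mathbf L}A_{X_2})$.

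Third, one invokes the general Morita-theoretic result for dg-categories: for small dg-categories $\cat{A}$ and $\cat{B}$ over $\K$, there is a canonical quasi-equivalence
\[
\Dg(\cat{A}\opp\otimes^{\mathbf L}\cat{B})\isomor\R\sHom_c(\Dg(\cat{A}),\Dg(\cat{B})),
\]
sending a bimodule $\fun{M}$ to the direct-sum-preserving quasi-functor $(-)\otimes^{\mathbf L}_{\cat{A}}\fun{M}$. Applying this with $\cat{A}=A_{X_1}$ and $\cat{B}=A_{X_2}$, and composing with the identifications from the previous step, one obtains the required quasi-equivalence.

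The main obstacle is the K\"unneth-type comparison $A_{X_1\times X_2}\simeq A_{X_1}\otimes^{\mathbf L}_\K A_{X_2}$ together with the compatibility of the whole chain of quasi-equivalences: one must check that the composite constructed through the intermediate dg-algebras agrees, up to natural quasi-isomorphism, with the Fourier--Mukai dg-functor of the first step, rather than with some abstract but non-geometric identification. A secondary technical difficulty is to produce a functorial model for $\FMdg{(-)}$ on the \emph{unbounded} derived dg-category of $X_1\times X_2$, so that the resolutions used to define $p_1^*$, $\Lotimes$ and $\R(p_2)_*$ interact correctly with arbitrary direct sums and with the compact generator chosen above.
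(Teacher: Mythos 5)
The survey does not prove this statement---it is quoted from To\"en \cite{To} as an external input to the proof of Theorem \ref{thm:LO}---so there is no internal argument to compare against. Your outline reproduces To\"en's own proof of his Theorem 8.9: the dg-level integral-transform functor, reduction to a single compact generator via Bondal--Van den Bergh, the K\"unneth identification $A_{X_1\times X_2}\simeq A_{X_1}\otimes^{\mathbf L}_{\K}A_{X_2}$ (where quasi-compactness, separatedness and working over a field enter), and the derived Morita theorem identifying $\R\sHom_c$ between the module dg-categories with the dg-category of bimodules. The one caveat is that this last ingredient is the genuinely hard theorem of \cite{To} (its Theorem 7.2, resting on the model structure on dg-categories and the computation of mapping spaces), so as written your argument is a reduction of Theorem 8.9 to that result rather than a self-contained proof---which is exactly how To\"en organizes it---and you correctly isolate the remaining delicate point, namely that the abstract Morita equivalence is implemented by the geometric Fourier--Mukai kernel.
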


Hence there are an $\ke\in\Dg(X_1\times X_2)$ and an isomorphism
$\dgfun{F}\iso\FMdg{\ke}$ and it remains to show that
$\fun{F}\iso H^0(\dgfun{F})\iso\FM{\ke}$.

\subsubsection{Ample sequences}\label{subsubsec:ample}

The projectivity assumption in the statement has a rather important role. Indeed one needs to work with ample sequences according to the following.

\begin{definition}\label{def:ample}
Given a $\Hom$-finite abelian category
$\cat{A}$, a subset
$\{P_i\}_{i\in\ZZ}\subset\ob(\cat{A})$ is an \emph{ample sequence}
if, for any $B\in\ob(\cat{A})$, there exists an integer $i(B)$ such
that, for any $i\leq i(B)$,
\begin{enumerate}
\item\label{ample1} the natural morphism $\Hom_{\cat{A}}(P_i,B)
\otimes P_i\to B$ is surjective;

\item\label{ample2} if $j\ne0$ then
$\Hom_{\Db(\cat{A})}(P_i,B[j])=0$;

\item\label{ample3} $\Hom_{\cat{A}}(B,P_i)=0$.
\end{enumerate}\end{definition}

If $X$ is a projective scheme and $H$ is an ample line bundle on $X$,
then one may consider the set $\cat{C}$ (often identified with the
corresponding full subcategory of $\Coh(X)$) consisting of objects of
the form $\ko_X(iH)$, where $i$ is any integer.

\begin{prop}\label{prop:ampleornot}{\bf(\cite{LO}, Proposition 9.2.)}
	If $X$ is a projective scheme such that $T_0(\ko_X)=0$, then $\cat{C}$ forms an ample sequence in the abelian category $\Coh(X)$.
\end{prop}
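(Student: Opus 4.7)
The plan is to verify the three conditions of Definition \ref{def:ample} for $P_i = \ko_X(iH)$ with $i$ sufficiently negative, separately for each $B \in \Coh(X)$. Conditions (1) and (2) follow formally from ampleness of $H$ and do not require the hypothesis on $\ko_X$: the natural morphism $\Hom(\ko_X(iH), B) \otimes \ko_X(iH) \to B$ is the global-sections evaluation map of $B(-iH)$, which is surjective for $-i \gg 0$ by Serre's theorem, while $\Hom_{\Db(X)}(\ko_X(iH), B[j]) \iso H^j(X, B(-iH))$ vanishes for $j \neq 0$ and $-i \gg 0$ by Serre vanishing.

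The subtle condition is (3). Writing
$$\Hom(B, \ko_X(iH)) \iso H^0(X, \sHom(B, \ko_X) \otimes \ko_X(iH))$$
and setting $F := \sHom(B, \ko_X)$, the first step is to propagate the hypothesis: $T_0(\ko_X) = 0$ implies $T_0(F) = 0$, since any local section of $F$ supported on a $0$-dimensional closed set is a local morphism $B \to \ko_X$ whose pointwise values land in the vanishing $0$-dimensional torsion of $\ko_X$, hence must be zero. It therefore suffices to prove that for any coherent $F$ on $X$ with $T_0(F) = 0$, $H^0(X, F(iH)) = 0$ for $i \ll 0$.

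To this end, fix $m \geq 1$ such that $|mH|$ defines a closed embedding $\iota \colon X \hookrightarrow \PP^N$. Writing $i = qm + r$ with $0 \leq r < m$, the projection formula gives
$$H^0(X, F(iH)) \iso H^0(\PP^N, \iota_*(F(rH))(q)),$$
so, as $r$ ranges over finitely many residues, it is enough to show: for every coherent $G$ on $\PP^N$ with $T_0(G) = 0$, $H^0(\PP^N, G(q)) = 0$ for $q \ll 0$ (each $\iota_*(F(rH))$ satisfies $T_0 = 0$, since this property is preserved under twist by an invertible sheaf and under pushforward along a closed embedding). By Serre duality on $\PP^N$,
$$H^0(\PP^N, G(q))^{\vee} \iso \Ext^N(G, \omega_{\PP^N}(-q));$$
for $q \ll 0$ the twist $\omega_{\PP^N}(-q)$ is very positive, so the local-to-global spectral sequence combined with Serre vanishing collapses to identify this $\Ext$ with $H^0(\PP^N, \mathcal{E}xt^N(G, \omega_{\PP^N})(-q))$. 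Local duality on $\PP^N$ identifies the stalk of $\mathcal{E}xt^N(G, \omega_{\PP^N})$ at a closed point $x$ with the Matlis dual of the local cohomology $H^0_{\mathfrak{m}_x}(G_x)$, which vanishes at every $x$ precisely because $T_0(G) = 0$; hence $\mathcal{E}xt^N(G, \omega_{\PP^N}) = 0$ and (3) follows.

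The hypothesis $T_0(\ko_X) = 0$ is indispensable: a nonzero $0$-dimensional torsion section of $\ko_X$ persists as a nonzero section of every twist $\ko_X(iH)$, producing a nonzero element of $\Hom(\ko_X, \ko_X(iH)) \iso H^0(X, \ko_X(iH))$ for every $i$ and thus obstructing (3) for $B = \ko_X$. The main obstacle in the proof is therefore condition (3), which splits naturally into the sheaf-theoretic transfer of the hypothesis from $\ko_X$ to $\sHom(B, \ko_X)$, followed by the cohomological translation of $T_0$-vanishing on $\PP^N$ into vanishing of $H^0$ for large negative twists via Serre duality and local duality.
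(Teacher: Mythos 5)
Your argument is correct, but be aware that the survey itself contains no proof of this statement: it is quoted verbatim from Lunts--Orlov \cite{LO}, Proposition 9.2, so there is no internal argument to compare against. Your treatment of conditions (1) and (2) of Definition \ref{def:ample} via Serre's theorems is standard and fine, and you correctly isolate condition (3) as the only place where $T_0(\ko_X)=0$ enters. The transfer of the hypothesis to $F=\sHom(B,\ko_X)$ is sound (the image of a local morphism $B\to\ko_X$ with $0$-dimensional support is a $0$-dimensional subsheaf of $\ko_X$, hence zero), as is the reduction to $\PP^N$ along a closed embedding and the chain Serre duality $\to$ local-to-global $\Ext$ spectral sequence $\to$ local duality, using that $T_0(G)=0$ is equivalent to $H^0_{\mathfrak{m}_x}(G_x)=0$ at every closed point $x$, i.e.\ to the vanishing of $\mathcal{E}xt^N(G,\omega_{\PP^N})$.

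Two remarks on efficiency rather than correctness. First, for condition (3) you can skip the sheaf-theoretic transfer to $\sHom(B,\ko_X)$ entirely: choosing a surjection $\bigoplus_l\ko_X(-k_lH)\twoheadrightarrow B$ and using left exactness of $\Hom(-,\ko_X(iH))$ embeds $\Hom(B,\ko_X(iH))$ into $\bigoplus_l H^0(X,\ko_X((i+k_l)H))$, so the whole of (3) reduces to the single case $B=\ko_X$; this is closer in spirit to the argument in \cite{LO}. Second, your route through local duality is heavier machinery than strictly needed, but it is arguably the cleanest way to see why $T_0=0$ is exactly the right hypothesis (your closing observation that a nonzero $0$-dimensional torsion section of $\ko_X$ obstructs (3) for every twist is the correct converse and matches the role this assumption plays in Problem \ref{prob:maxtors}). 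The only step where you should supply slightly more detail if writing this up is the identification of $T_0(G)=0$ with the pointwise vanishing of $H^0_{\mathfrak{m}_x}(G_x)$, which requires spreading out an $\mathfrak{m}_x$-power-torsion germ to a coherent $0$-dimensional subsheaf on a neighborhood; this is routine but not purely formal.
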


Notice that this is the place where the assumption about the maximal torsion subsheaf plays a distinguished role. Thus there is space for further work:

\begin{pn}\label{prob:maxtors}
	Remove the assumption $T_0(\ko_X)=0$ and, in particular, find a way to extend Theorem \ref{thm:LO} when $X_1$ is a $0$-dimensional projective scheme.
\end{pn}

At this point Lunts and Orlov show that the Fourier--Mukai functor $\FM{\ke}$, with kernel found in Section \ref{subsubsec:dg}, and the given functor $\fun{F}$ are such that there is an isomorphism
\begin{equation}\label{eqn:iso1}
	\theta_1\colon\fun{F}\rest{\cat{C}}\isomor\FM{\ke}\rest{\cat{C}}.
\end{equation}

Before discussing how this isomorphism can be extended, let us formulate the following rather general problem.

\begin{pn}\label{prob:ample}
	Avoid the use of ample sequences and relax the projectivity assumptions.
\end{pn}

Both Problem \ref{prob:maxtors} and \ref{prob:ample} are widely open but we believe that any improvement in these directions may give new important impulses to the theory.

\subsubsection{Convolutions}\label{subsubsec:convolutions}

The extension of \eqref{eqn:iso1} is achieved in two steps. First the extension takes place on the level of sheaves. And for this one writes every perfect sheaf (i.e.\ a coherent sheaf which is a perfect object as well) as a convolution of objects in the ample sequence $\cat{C}$ on $X_1$ described in the previous section.

\smallskip

Following \cite{Ka,Or1}, recall that a bounded complex in a
triangulated category $\cat{T}$ is a sequence of objects and
morphisms in $\cat{T}$
\begin{equation}\label{eqn:complex}
A_{m}\mor{d_{m}}A_{m-1}\mor{d_{m-1}}\cdots\mor{d_{1}}A_0
\end{equation}
such that $d_{j}\comp d_{j+1}=0$ for $0<j<m$. A \emph{right convolution} of
\eqref{eqn:complex} is an object $A$ together with a morphism
$d_0\colon A_0\to A$ such that there exists a diagram in $\cat{T}$
\[\xymatrix{A_m \ar[rr]^{d_m} \ar[dr]_{\id} \ar@{}[drr]|{\circlearrowright}
& & A_{m-1} \ar[rr]^{d_{m-1}} \ar[dr] \ar@{}[drr]|{\circlearrowright} & &
\cdots \ar[rr]^{d_2} & & A_1 \ar[rr]^{d_1} \ar[dr]
\ar@{}[drr]|{\circlearrowright} & & A_0 \ar[dr]_{d_0} \\
 & A_m \ar[ru] & & C_{m-1} \ar[ll]^{[1]} \ar[ru] & & \cdots
\ar[ll]^{[1]} & & C_1 \ar[ll]^{[1]} \ar[ru] & & A, \ar[ll]^{[1]}}\]
where the triangles with a $\circlearrowright$ are
commutative and the others are distinguished.

\smallskip

Roughly speaking, in this part of the argument, we have
$A\in\Coh(X_1)\cap\Dp(X_1)$ while $A_i$ is a finite direct sum of
objects in $\cat{C}$, for all $i$. Unfortunately, to use convolutions
one needs to make assumptions on the functor $\fun{F}$. The hypothesis
in Theorem \ref{thm:LO} that $\fun{F}$ is fully faithful goes
exactly in this direction. Thus, if we want to substantially improve
Theorem \ref{thm:LO}, one has to address the following:

\begin{pn}\label{prob:conv}
	Avoid the use of convolutions.
\end{pn}

All in all, we get an isomorphism
\[
	\theta_2\colon\fun{F}\rest{\Coh(X_1)\cap\Dp(X_1)}\isomor\FM{\ke}\rest{\Coh(X_1)\cap\Dp(X_1)}.
\]
To produce the desired isomorphism
\[
	\theta_3\colon\fun{F}\isomor\FM{\ke}
\]
one argues by induction on the length of the interval to which the non-trivial cohomologies of an object $\kf\in\Dp(X_1)$ belong.

\begin{remark}\label{rmk:CSExtending}
The techniques used to get the
extension $\theta_3$ were improved in \cite{CS2} (see, in
particular, Sections 3.2 and 3.3 of that paper). Indeed, we consider
a wider class of triangulated categories and we deal with extensions
of natural transformations rather than isomorphisms of
functors. These ingredients play a role in the results of Sections
\ref{subsec:abel} and \ref{subsec:supp}.
\end{remark}

\subsection{Exact functors between the abelian categories of coherent sheaves}\label{subsec:abel}

As pointed out in Example \ref{ex:other}, if $X_1$ and $X_2$ are
smooth projective varieties, then the functors induced by exact functors from
$\Coh(X_1)$ to $\Coh(X_2)$ satisfy \eqref{eqn:hyp}, hence Theorem
\ref{thm:CS1} holds for them. This suggests that questions analogous
to (Q1)--(Q5) should be easier to answer for exact functors between
the abelian categories of coherent sheaves. Indeed, for them one can
prove the following result, improving \cite[Prop.\ 5.1]{CS}.

As a matter of notation, if $X_1$ and $X_2$ are smooth projective varieties we denote by
$\kercat{X_1}{X_2}$ the full subcategory of $\Coh(X_1\times X_2)$
having as objects the sheaves $\ke$ which are flat over $X_1$ and
such that $p_2\rest{\supp(\ke)}\colon\supp(\ke)\to X_2$ is a finite
morphism.

\begin{prop}\label{prop:exab}
	
Let $X_1$ and $X_2$ be smooth projective varieties. If $\ke$ is in $\Coh(X_1\times X_2)$, then the additive
functor
\[
\aFM{\ke}:=(p_2)_*(\ke\otimes p_1^*(-))\colon\Coh(X_1)\to\Coh(X_2)
\]
(where $(p_2)_*$ and $\otimes$ are not derived) is exact if and only
if $\ke\in\kercat{X_1}{X_2}$.

Moreover, if we denote by $\ExFun(\Coh(X_1),\Coh(X_2))$ the category
of exact functors from $\Coh(X_1)$ to $\Coh(X_2)$, the functor
\[
\aFM[X_1\to X_2]{\farg}\colon\kercat{X_1}{X_2}\lto
\ExFun(\Coh(X_1),\Coh(X_2))
\]
sending $\ke\in\kercat{X_1}{X_2}$ to $\aFM{\ke}$ is an equivalence of
categories.
\end{prop}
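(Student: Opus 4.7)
The key device is the trivial extension of an exact functor $\Coh(X_1)\to\Coh(X_2)$ to an exact functor $\Db(X_1)\to\Db(X_2)$, which automatically satisfies the hypothesis \eqref{eqn:hyp} of Theorem \ref{thm:CS1} (the target is a sheaf concentrated in degree zero, so negative $\Hom$'s vanish). This reduces both parts of the proposition to combining Theorem \ref{thm:CS1} with an analysis of which kernels $\ke \in \Db(X_1\times X_2)$ yield $\FM{\ke}$ that restricts to a functor $\Coh(X_1)\to\Coh(X_2)$.

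\emph{Exactness characterization.} The ``if'' direction is routine: $p_1$-flatness makes $\ke \otimes p_1^*(\farg)$ exact on $\Coh(X_1)$, and finiteness of $p_2|_{\supp(\ke)}$ makes $p_2$ affine on $\supp(\ke)$, so $(p_2)_*$ is exact on sheaves of the form $\ke \otimes p_1^*\kf$. For the converse, suppose $\aFM{\ke}$ is exact. Flatness over $X_1$ is local: at $(x_1,x_2)\in\supp(\ke)$, one tests exactness on short exact sequences $0\to\so_{X_1}(-D)\to\so_{X_1}\to\so_D\to 0$ attached to divisors cutting out a regular sequence at $x_1$; after twisting by a sufficiently $p_2$-ample line bundle (which by Serre vanishing makes $(p_2)_*$ compute global sections faithfully on $X_1\times X_2$), exactness translates to vanishing of $\mathrm{Tor}_1^{\so_{X_1,x_1}}(\ke_{(x_1,x_2)},\kappa(x_1))$, i.e.\ $p_1$-flatness. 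For finiteness of $p_2|_{\supp(\ke)}$, a positive-dimensional fiber over some $x_2\in X_2$ would force $\ke|_{X_1\times\{x_2\}}$ to have positive-dimensional support in $X_1$; choosing $\kf=L_1^{-m}$ for $L_1$ ample on $X_1$ and $m\gg 0$, Serre duality produces nonvanishing higher cohomology of $\ke|_{X_1\times\{x_2\}}\otimes L_1^{-m}$ on that fiber, and cohomology and base change transports this to a nonzero $R^i(p_2)_*(\ke\otimes p_1^*L_1^{-m})$ with $i>0$, manifesting as a broken short exact sequence after $\aFM{\ke}$, contradicting exactness.

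\emph{Equivalence of categories.} Essential surjectivity: given an exact $\fun{F}$, apply Theorem \ref{thm:CS1} to its trivial derived extension to obtain a unique $\ke \in \Db(X_1\times X_2)$ with $\FM{\ke}|_{\Coh(X_1)}\iso\fun{F}$; inspecting $\FM{\ke}$ on skyscrapers $\so_{x_1}$ (which have Koszul resolutions on $X_1$) shows $\ke$ is a sheaf flat over $X_1$, and the argument of the previous paragraph forces $\supp(\ke)$ to be finite over $X_2$, so $\ke\in\kercat{X_1}{X_2}$ and $\aFM{\ke}\iso\fun{F}$. Faithfulness: for $f\colon\ke_1\to\ke_2$ in $\kercat{X_1}{X_2}$, $\aFM{f}(\so_{X_1})=(p_2)_*f$, and $(p_2)_*$ is faithful on sheaves with support finite over $X_2$. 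Fullness: a natural transformation $\eta\colon\aFM{\ke_1}\to\aFM{\ke_2}$ extends uniquely to a natural transformation of the derived Fourier--Mukai functors $\FM{\ke_1}\to\FM{\ke_2}$ using the extended techniques of Remark \ref{rmk:CSExtending} (which upgrade the uniqueness in Theorem \ref{thm:CS1} from isomorphisms to natural transformations), producing a unique $f\colon\ke_1\to\ke_2$ with $\aFM{f}=\eta$.

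\emph{Main obstacle.} The hardest step is the finiteness half of the converse in the exactness characterization: deducing that $\supp(\ke)$ is finite over $X_2$ from mere exactness of $\aFM{\ke}$. It requires a careful combination of Serre duality on fibers with cohomology and base change for $p_2$ (delicate without flatness over $X_2$), and then converting nonvanishing higher direct images into an explicit failure of exactness of $\aFM{\ke}$ on a concrete short exact sequence in $\Coh(X_1)$.
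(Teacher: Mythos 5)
Your treatment of essential surjectivity matches the paper's (apply Theorem \ref{thm:CS1} to the trivial derived extension of $\fun{F}$, then identify the resulting kernel as an object of $\kercat{X_1}{X_2}$), and your faithfulness argument --- evaluate $\aFM{f}$ at $\so_{X_1}$ and use that pushforward along a finite morphism is faithful --- is a correct and arguably cleaner alternative to the paper's. The first part of the statement (the exactness characterization) is not proved in the paper at all, being deferred to \cite{CS}, so your sketch there cannot be checked against it; it is plausible in outline but leaves its hardest point, converting a positive-dimensional fibre of $p_2\rest{\supp(\ke)}$ into a concrete failure of exactness of the underived functor, at the level of a plan.

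The genuine gap is in fullness. You propose to extend a natural transformation $\eta\colon\aFM{\ke_1}\to\aFM{\ke_2}$ to a natural transformation of the derived functors $\FM{\ke_1}\to\FM{\ke_2}$ and then to ``produce a unique $f\colon\ke_1\to\ke_2$ with $\aFM{f}=\eta$''. That last step is precisely question (Q4), the fullness of $\FM[X_1\to X_2]{\farg}$ at the derived level, which this very paper shows fails in general: Proposition \ref{nonfaith} exhibits, already for curves, natural transformations (indeed isomorphisms) of Fourier--Mukai functors not induced by any morphism of kernels. Remark \ref{rmk:CSExtending} and \cite[Prop.\ 3.6]{CS2} only give the bijection between natural transformations restricted to the ample sequence $\{\so_{X_1}(mH)\}_{m\in\ZZ}$ and natural transformations defined on all of $\Coh(X_1)$; they do not descend a transformation of functors to a morphism of kernels. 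The paper supplies the missing mechanism at the abelian level, via the relative Serre correspondence for $X_1\times X_2\iso\sproj\ks$ with $\ks=\bigoplus_{m\ge0}(p_2)_*(p_1^*\so_{X_1}(mH))$: the functor $\fun{G}(\ke)=\bigoplus_{m}\aFM{\ke}(\so_{X_1}(mH))$ into graded $\ks$-modules and the associated-sheaf functor $\fun{H}$ satisfy $\fun{H}\comp\fun{G}\iso\id$, so $\eta$ is identified with a morphism of graded $\ks$-modules and hence, after sheafification, with a morphism $\ke_1\to\ke_2$. Fullness then reduces to showing that a natural transformation vanishing on $\so_{X_1}(mH)$ for $m\gg0$ vanishes for all $m$, which is exactly where the exactness of $\aFM{\ke_2}$ (i.e.\ the hypothesis $\ke_2\in\kercat{X_1}{X_2}$) enters. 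Some replacement for this abelian-level descent is needed; it cannot be obtained from the derived-level machinery you invoke.
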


\begin{proof}
We just stick to the second part of the statement and we invite the
reader interested in a proof of the first part to have a look at
\cite{CS}.

We sketch the proof that $\aFM[X_1\to X_2]{\farg}$ is essentially
surjective (again, for more details see \cite{CS}). Hence assume that
$\fun{F}\colon\Coh(X_1)\to\Coh(X_2)$ is an exact functor. By Theorem
\ref{thm:CS1} there exists (unique up to isomorphism)
$\ke\in\Db(X_1\times X_2)$ such that the extension of $\fun{F}$ to the
level of derived categories is isomorphic to $\FM{\ke}$, and
$\ke\in\Coh(X_1\times X_2)$ (to see that $\ke$ is a sheaf, one can
use, for example, \cite[Lemma 2.5]{CS}). From the fact that
$\FM{\ke}(\Coh(X_1))\subseteq\Coh(X_2)$ it is easy to deduce that
$\fun{F}\iso\FM{\ke}\rest{\Coh(X_1)}\iso\aFM{\ke}$.

In order to prove that $\aFM[X_1\to X_2]{\farg}$ is fully faithful,
denoting by $\ks$ the sheaf
$\bigoplus_{m\ge0}(p_2)_*(p_1^*\so_{X_1}(mH))$ of graded algebras on
$X_2$ ($H$ being an ample line bundle on $X_1$), we will use the
relative version of the Serre correspondence between graded $\ks$-modules
and sheaves on $\sproj\ks\iso X_1\times X_2$. More precisely, denoting
by $\gfMod{\ks}$ the category of graded $\ks$-modules of finite type
(meaning finitely generated in sufficiently high degrees), one
considers the associated sheaf functor
$\fun{H}\colon\gfMod{\ks}\to\Coh(X_1\times X_2)$ and the functor
$\fun{G}\colon\Coh(X_1\times X_2)\to\gfMod{\ks}$ defined on objects by
$\fun{G}(\ke):=\bigoplus_{m\in\ZZ}\aFM{\ke}(\so_{X_1}(mH))$. They
satisfy $\fun{H}\comp\fun{G}\iso\id$ and, moreover, an object or a
morphism of $\gfMod{\ks}$ is sent to $0$ by $\fun{H}$ if and only if
it is $0$ in sufficiently high degrees.

Now, given $\ke_1,\ke_2\in\kercat{X_1}{X_2}$, morphisms in
$\gfMod{\ks}$ from $\fun{G}(\ke_1)$ to $\fun{G}(\ke_2)$ can be
identified with natural transformations from
$\aFM{\ke_1}\rest{\cat{C}}$ to $\aFM{\ke_2}\rest{\cat{C}}$, where
$\cat{C}$ is the full subcategory of $\Coh(X_1)$ with objects
$\{\ko_{X_1}(iH)\}_{i\in\ZZ}$. By \cite[Prop.\ 3.6]{CS2} (applied to
the functors $\FM{\ke_1}$ and $\FM{\ke_2}$) such natural
transformations correspond bijectively to natural transformations from
$\aFM{\ke_1}$ to $\aFM{\ke_2}$. Therefore, in view of the properties
of $\fun{G}$ and $\fun{H}$ mentioned above, the fully faithfulness of
$\aFM[X_1\to X_2]{\farg}$ amounts to the following: if
$\alpha\colon\aFM{\ke_1}\to\aFM{\ke_2}$ is a natural transformation
such that $\alpha_m:=\alpha(\ko_{X_1}(mH))=0$ for $m\gg0$, then
$\alpha_m=0$ for every $m\in\ZZ$. Clearly to this purpose it is enough
to show that $\alpha_m=0$ implies $\alpha_{m-1}=0$. To see this, take
a monomorphism $f\colon\ko_{X_1}((m-1)H)\mono\ko_{X_1}(mH)$ and just
observe that in the commutative diagram
\[
\xymatrix{\aFM{\ke_1}(\ko_{X_1}((m-1)H)) \ar[rr]^-{\aFM{\ke_1}(f)}
\ar[d]^{\alpha_{m-1}} & & \aFM{\ke_1}(\ko_{X_1}(mH)) \ar[d]^{\alpha_m=0}
\\
\aFM{\ke_2}(\ko_{X_1}((m-1)H)) \ar[rr]^-{\aFM{\ke_2}(f)} & &
\aFM{\ke_2}(\ko_{X_1}(mH))}
\]
$\aFM{\ke_2}(f)$ is a monomorphism, because $\aFM{\ke_2}$ is exact.
\end{proof}

In particular, this shows that for the functor $\aFM[X_1\to
X_2]{\farg}$ questions (Q1)--(Q4) can be answered positively. As for
(Q5), notice that in general $\kercat{X_1}{X_2}$ is an additive but
not an abelian subcategory of $\Coh(X_1\times X_2)$.

\subsection{The supported case}\label{subsec:supp}

In this section we want to show how Theorem \ref{thm:LO} can be extended both considering a more general categorical setting and weakening the assumptions on the exact functor.

\smallskip

Indeed, let $X$ be a separated scheme of finite type over $\K$ and let $Z$ be
a subscheme of $X$ which is proper over $\K$. We denote by $\D_Z(\Qcoh(X))$ the derived
category of unbounded complexes of quasi-coherent sheaves on $X$ with
cohomologies supported on $Z$. Using this, we can define
the triangulated categories
\begin{equation*}\label{eqn:categories}
\begin{split}
&\Db_Z(\Qcoh(X)):=\D_Z(\Qcoh(X))\cap\Db(\Qcoh(X))\\
&\Db_Z(X):=\D_Z(\Qcoh(X))\cap\Db(X).
\end{split}
\end{equation*}
We also set
\[
\Dp[Z](X):=\D_Z(\Qcoh(X))\cap\Dp(X).
\]

\begin{ex}\label{ex:open}
	These categories appear naturally studying the so called \emph{open Calabi-Yau's}. Examples of them are local resolutions of $A_n$-singularities (\cite{IU,IUU}) and the total space $\mathrm{tot}(\omega_{\PP^2})$ of the canonical bundle of $\PP^2$ (\cite{BaMa}). In the latter case, if $Z$ denotes the zero section of the projection $\mathrm{tot}(\omega_{\PP^2})\to\PP^2$, the derived category $\Dp[Z](\mathrm{tot}(\omega_{\PP^2}))= \Db_{Z}(\mathrm{tot}(\omega_{\PP^2}))$ is a Calabi--Yau category of dimension $3$ and may be seen as an interesting example to test predictions about Mirror Symmetry and the topology of the space of stability conditions according to Bridgeland's definition (see \cite{BaMa} for results in this direction). Moreover, as a consequence of \cite{IU,IUU}, all autoequivalences of the supported derived categories of $A_n$-singularities are of Fourier--Mukai type and the group of such autoequivalences can be explicitly described. See \cite{CS2} for more details.
\end{ex}

The category $\D_Z(\Qcoh(X))$ is a full subcategory of $\D(\Qcoh(X))$ and let
\[
\iota\colon\D_Z(\Qcoh(X))\lto\D(\Qcoh(X))
\]
be the inclusion. This functor has a right adjoint
\[
\iota^!\colon\D(\Qcoh(X))\to\D_Z(\Qcoh(X))\qquad \iota^!(\ke):=\colim\R\sHom(\ko_{nZ},\ke),
\]
where $nZ$ is the $n$-th infinitesimal neighborhood of $Z$ in $X$ (see \cite[Prop.\ 3.2.2]{L}). Due to \cite[Cor.\ 3.1.4]{L}, the functor $\iota^!$ sends bounded complexes to bounded complexes and $\iota^!\comp\iota\iso\id$.

Now, let $X_1$ and $X_2$ be separated schemes of finite type over $\K$ containing, respectively, two subschemes $Z_1$ and $Z_2$ which are proper over $\K$. The following generalizes the standard definition of Fourier--Mukai functor.

\begin{definition}\label{def:FMs}
	An exact functor
	\[
	\fun{F}\colon\D_{Z_1}(\Qcoh(X_1))\to\D_{Z_2}(\Qcoh(X_2))
	\]
	is a \emph{Fourier--Mukai functor} if there exists $\ke\in\D_{Z_1\times Z_2}(\Qcoh(X_1\times X_2))$ and an isomorphism of exact functors
	\begin{equation}\label{eqn:FMdef}
	\fun{F}\iso\FMS{\ke}:=\iota^!(p_2)_*((\iota\times\iota)\ke\otimes p_1^*(\iota(-)))
	\end{equation}
	where $p_i\colon X_1\times X_2\to X_i$ is the projection.
\end{definition}

An analogous definition can be given for functors defined between bounded derived categories of quasi-coherent, coherent or perfect complexes. As always, the object $\ke$ is called \emph{Fourier--Mukai kernel}. It should be noted that, contrary to the smooth non-supported case, the Fourier--Mukai kernel cannot be assumed to be a bounded coherent complex. This is clarified by the following example dealing with the identity functor.

\begin{ex}\label{ex:identity}
We want to show that a Fourier--Mukai kernel of the identity functor
$\id\colon\Db_{Z}(X)\to\Db_{Z}(X)$ is
\[
(\iota\times\iota)^!\ki\in
\Db_{Z\times Z}(\Qcoh(X\times X)),
\]
where, denoting by $\Delta\colon X\to X\times X$ the diagonal embedding,
	\[
	\ki:=\Delta_*\comp\iota\comp\iota^!(\ko_X).
	\]
Indeed, according to \cite{CS2}, we have the following isomorphisms:
\begin{equation*}
\begin{split}
\Hom(\ka,\iota^!\FM{\ki}(\iota\kb))&\iso\Hom(\iota\ka,(p_2)_*(\Delta_*\comp\iota\comp\iota^!(\ko_X)\otimes p_1^*(\iota\kb)))\\
&\iso\Hom((\iota\kb)\dual\boxtimes\iota\ka,\Delta_*\comp\iota\comp\iota^!(\ko_X))\\
&\iso\Hom((\iota\kb)\dual\otimes\iota\ka,\ko_X)\\
&\iso\Hom(\ka,\iota^!\iota\kb)\iso\Hom(\ka,\kb),
\end{split}
\end{equation*}
for any $\ka,\kb\in\Db_{Z}(X)$. Here $p_i\colon X\times X\to X$ is the natural projection. For the first and the fourth isomorphism we used the adjunction between $\iota$ and $\iota^!$. The same adjunction together with the one between $\Delta^*$ and $\Delta_*$ and the fact that $\iota$ is fully faithful and $(\iota\kb)\dual\otimes\iota\ka$ has support in $Z$ explains the third isomorphism.

Obviously $(\iota\times\iota)^!\ki$ does not belong to $\Db_{Z\times Z}(X\times X)$. Suppose that there
exists $\ke\in\Db_{Z\times Z}(X\times X)$ such that
\[\FMS{\ke}\iso\id\colon\Db_Z(X)\to\Db_Z(X).\]
By \cite[Lemma 7.41]{R}, there exist $n>0$ and $\ke_n\in\Db(nZ\times nZ)$ such that
$(\iota\times\iota)\ke\iso(i_n\times i_n)_*\ke_n$, where $i_n\colon
nZ\to X$ is the embedding. For any $\kf_n\in\Db(nZ)$, we have
\begin{equation}\label{eqn:split1}
(i_n)_*\kf_n\iso\FM{\ke}((i_n)_*\kf_n)\iso(i_n)_*\FM{\ke_n}((i_n)^*(i_n)_*\kf_n).
\end{equation}

Take now $X=\PP^k$, $Z=\PP^{k-1}$ and $\kf_n:=\ko_{nZ}(m)$, for $m\in\ZZ$. An easy calculation shows that $(i_n)^*(i_n)_*\kf_n\iso\ko_{nZ}(m)\oplus\ko_{nZ}(m-n)[1]$. Hence to have \eqref{eqn:split1} verified, we should have either $\FM{\ke_n}(\ko_{nZ}(m))=0$ or $\FM{\ke_n}(\ko_{nZ}(m-n))=0$. But the following isomorphisms should hold at the same time
\[
\begin{split}
&\FM{\ke_n}(\ko_{nZ}(m))\oplus\FM{\ke_n}(\ko_{nZ}(m-n))[1]\iso\ko_{nZ}(m),\\
&\FM{\ke_n}(\ko_{nZ}(m+n))\oplus\FM{\ke_n}(\ko_{nZ}(m))[1]\iso\ko_{nZ}(m+n).
\end{split}
\]
If $\FM{\ke_n}(\ko_{nZ}(m-n))=0$, then from the second one we would have that $\ko_{nZ}(m)[1]$ is a direct summand of $\ko_{nZ}(m+n)$ which is absurd. Thus $\FM{\ke_n}(\ko_{nZ}(m))=0$. As this holds for all $m\in\ZZ$, we get a contradiction.
\end{ex}

\smallskip

Now let $X_1$ be a quasi-projective scheme containing a projective subscheme $Z_1$ such that $\ko_{iZ_1}\in\Dp(X_1)$, for all $i>0$, and let $X_2$ be a separated scheme of finite type over $\K$ with a subscheme $Z_2$ which is proper over $\K$.

\begin{remark}\label{rmk:amplein}
Notice that under these assumptions, and having fixed an ample divisor $H_1$
on $X_1$, the objects $\ko_{|i|Z_1}(jH_1)$ are in $\Dp[Z_1](X_1)$, for all $i,j\in\ZZ$. Special cases in which $\ko_{iZ_1}\in\Dp(X_1)$ are when $X_1=Z_1$ or $X_1$ is smooth.
\end{remark}

    One can consider exact functors $\fun{F}\colon\Dp[Z_1](X_1)\to\Dp[Z_2](X_2)$ such that
	\medskip
	\begin{itemize}
	\item[$\Cc$]
	\begin{itemize}
	\item[(1)] \emph{$\Hom(\fun{F}(\ka),\fun{F}(\kb)[k])=0$, for
          any $\ka,\kb\in\Coh_{Z_1}(X_1)\cap\Dp[Z_1](X_1)$ and any
          integer $k<0$;
	\item[(2)] For all $\ka\in\Dp[Z_1](X_1)$ with trivial
          cohomologies in (strictly) positive degrees, there is
          $N\in\ZZ$ such that
          \[\Hom(\fun{F}(\ka),\fun{F}(\ko_{|i|Z_1}(jH_1)))=0,\]
          for any $i<N$ and any $j\ll i$, where $H_1$ is an ample
          divisor on $X_1$.}
	\end{itemize}
	\end{itemize}
	\medskip
Then we have the following.

\begin{thm}\label{thm:CS2}{\bf(\cite{CS2}, Theorem 1.1.)}
	Let $X_1$, $X_2$, $Z_1$ and $Z_2$ be as above and let
\[
\fun{F}\colon\Dp[Z_1](X_1)\lto\Dp[Z_2](X_2)
\]
be an exact functor.

If $\fun{F}$ satisfies $\Cc$, then there exist $\ke\in\Db_{Z_1\times Z_2}(\Qcoh(X_1\times X_2))$ and an isomorphism of exact functors $\fun{F}\iso\FMS{\ke}$. Moreover, if $X_i$ is smooth quasi-projective, for $i=1,2$, and $\K$ is perfect, then $\ke$ is unique up to isomorphism.
\end{thm}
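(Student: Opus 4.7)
The plan is to combine the architecture of the proof of Theorem \ref{thm:LO} with the weakened-hypothesis technique underlying Theorem \ref{thm:CS1}, adapted throughout to the supported setting via the adjunction $\iota \dashv \iota^!$ recalled in Definition \ref{def:FMs}. First, I would set up a supported ample sequence: the collection $\cat{C} := \{\ko_{|i|Z_1}(jH_1)\}_{i,j \in \ZZ}$ lies in $\Coh_{Z_1}(X_1) \cap \Dp[Z_1](X_1)$ (using the assumption $\ko_{iZ_1} \in \Dp(X_1)$ for all $i>0$), and one checks, by a supported variant of Proposition \ref{prop:ampleornot}, that it generates the subcategory of coherent perfect supported sheaves through right convolutions in the sense of Section \ref{subsubsec:convolutions}. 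Condition $\Cc$(1) plays precisely the role of hypothesis \eqref{eqn:hyp} and is the substitute for full faithfulness in this context.

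Second, I would construct a Fourier--Mukai kernel via dg-enhancements. The categories $\Dp[Z_i](X_i)$ carry natural enhancements $\Perf_{Z_i}(X_i)$ obtained by restricting $\Perf(X_i)$ to objects with supported cohomology; an analog of Theorem \ref{thm:uniqen} gives uniqueness of these enhancements. Using condition $\Cc$(1) to kill the obstructions, one lifts $\fun{F}$ to a quasi-functor $\dgfun{F} \in \mathrm{rep}(\Perf_{Z_1}(X_1),\Perf_{Z_2}(X_2))$ following the method of Lunts--Orlov. A supported adaptation of To\"en's Theorem \ref{thm:Toen}, combined with Brown representability in $\D_{Z_1 \times Z_2}(\Qcoh(X_1 \times X_2))$, yields an object $\ke$ in this category together with an isomorphism $\dgfun{F} \iso \FMdg{\ke}$ of quasi-functors, producing an isomorphism $\fun{F}\rest{\cat{C}} \iso \FMS{\ke}\rest{\cat{C}}$.

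Third, I would extend this isomorphism from $\cat{C}$ to all of $\Dp[Z_1](X_1)$ in two stages, exactly as sketched in Section \ref{subsubsec:convolutions} but using the refined technique of \cite{CS2} alluded to in Remark \ref{rmk:CSExtending} (which works at the level of natural transformations rather than just isomorphisms). First, every object of $\Coh_{Z_1}(X_1) \cap \Dp[Z_1](X_1)$ is a right convolution of a bounded complex with terms in $\cat{C}$, and condition $\Cc$(2) provides the precise vanishing $\Hom(\fun{F}(\farg),\fun{F}(\ko_{|i|Z_1}(jH_1)))=0$ needed to propagate the natural transformation along such convolutions. Second, one extends to arbitrary objects of $\Dp[Z_1](X_1)$ by induction on the amplitude of the non-vanishing cohomologies, using standard truncation triangles together with $\Cc$(1). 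Boundedness of $\ke$ then follows from an analog of Lemma \ref{lem:boundcoh}: $\FMS{\ke}$ takes values in $\Dp[Z_2](X_2) \subset \Db_{Z_2}(\Qcoh(X_2))$, and testing against the ample sequence forces $\ke \in \Db_{Z_1 \times Z_2}(\Qcoh(X_1 \times X_2))$.

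For the uniqueness statement, under smoothness and perfectness of $\K$, a kernel giving the zero functor must itself be zero: given $\FMS{\ke} \iso \FMS{\kf}$, the cone of a natural map lifting this isomorphism yields a kernel whose Fourier--Mukai functor vanishes, and the ample sequence argument above identifies it with zero, so $\ke \iso \kf$. The perfectness of $\K$ ensures $X_1 \times X_2$ remains reduced and smooth, which is used in the K\"unneth and duality arguments invoked at this step. The main obstacle I anticipate is the third step: in the Lunts--Orlov proof, full faithfulness gives clean vanishing that makes convolutions go through without thought, whereas here condition $\Cc$ has been shaved to the bare minimum needed, and one must carefully verify that $\Cc$(2) is exactly the vanishing required at each inductive step of the convolution argument and its amplitude extension, both on objects and on natural transformations.
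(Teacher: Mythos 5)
First, a caveat: the survey does not actually prove this theorem --- it is quoted verbatim from \cite{CS2} --- so the only material to compare against is the sketch of ingredients in Section \ref{subsec:ingredients} and Remark \ref{rmk:CSExtending}. Measured against that sketch, your existence argument is on track: the supported ample sequence $\{\ko_{|i|Z_1}(jH_1)\}$, the dg-lift plus To\"en's theorem to produce a kernel in $\D_{Z_1\times Z_2}(\Qcoh(X_1\times X_2))$, the two-stage extension (first to coherent supported perfect objects via convolutions, then by induction on cohomological amplitude), and the role of $\Cc$(1) as the substitute for \eqref{eqn:hyp} with $\Cc$(2) feeding the convolution step --- all of this matches what the survey attributes to \cite{LO} and \cite{CS2}, including the improvement to natural transformations rather than isomorphisms of functors. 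You are also right to aim for a kernel in $\Db_{Z_1\times Z_2}(\Qcoh(X_1\times X_2))$ rather than a coherent one; Example \ref{ex:identity} shows this is unavoidable.

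There is, however, a genuine gap in your uniqueness step. You propose: given $\FMS{\ke}\iso\FMS{\kf}$, take ``the cone of a natural map lifting this isomorphism'' and show the resulting kernel induces the zero functor. This presupposes that an isomorphism of Fourier--Mukai functors lifts to a morphism of kernels, i.e.\ that the assignment kernel $\mapsto$ functor is full. The survey itself shows this fails: the non-fullness half of Proposition \ref{nonfaith} is proved precisely by exhibiting (for an elliptic curve) an isomorphism $\psi\colon\FM{\ke_1}\isomor\FM{\ke_2}$ that is \emph{not} of the form $\FM[X\to X]{f}$ for any $f\colon\ke_1\to\ke_2$ --- and the contradiction there is derived exactly from the cone argument you want to run. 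So the uniqueness of $\ke$ cannot be obtained this way. The correct mechanism (as in Theorem \ref{thm:Orlov}, \cite{CS}, and Lemma \ref{lem:uniqPP1}) is to show that \emph{any} kernel representing $\fun{F}$ is forced to be a convolution of a fixed complex built from the values of $\fun{F}$ on the ample sequence (via a resolution of the diagonal / the dg-module structure), and that under the smoothness and perfectness hypotheses this convolution is unique up to isomorphism; your observation about why perfectness of $\K$ enters (smoothness of $X_1\times X_2$) is correct, but it is needed for this convolution/duality argument, not to repair the lifting step.
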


Back to Remark \ref{rmk:amplein}, the above theorem can be applied in at least two interesting geometric contexts. If $X_1=Z_1$, then we get back (a generalization of) Theorem \ref{thm:LO}. On the other hand, if $X_1$ is smooth, then we can apply the above result to the autoequivalences of the categories described in Example \ref{ex:open} proving that they are all of Fourier--Mukai type. As noticed in \cite{CS2}, if $X_i=Z_i$, $\dim(X_1)>0$ and they are smooth, then
$\Cc$ is equivalent to \eqref{eqn:hyp}. Thus, Theorem \ref{thm:CS2} recovers Theorem \ref{thm:CS1} as
well.

\begin{remark}\label{rmk:suppuniqen}
	In the same vein as in \cite{LO}, it is proved in \cite[Thm.\ 1.2]{CS2} that $\Dp[Z](X)$ has a (strongly) unique dg-enhancement if $X$ and $Z$ have the same properties as $X_1$ and $Z_1$ in Theorem \ref{thm:CS2} and $T_0(\ko_Z)=0$. See \cite{LO} for the definition of strongly unique dg-enhancement which is not needed here.
\end{remark}

\section{More open problems}\label{sec:openproblems}

The list of problems mentioned in the above sections can be extended further. The main sources are actually very concrete geometric settings where they appear naturally. We try to list some of them below, although a complete clarification of their geometric meaning goes far beyond the scope of this paper.

\subsection{Does full imply essentially surjective?}\label{subsec:fullauto}

In Section \ref{subsec:full} we have seen that a full functor between the bounded derived categories of coherent sheaves on smooth projective varieties is automatically faithful. Assume now that we are given an exact endofunctor $\fun{F}\colon\Db(X)\to\Db(X)$, where $X$ is again a smooth projective variety. In this section we want to discuss the following.

\begin{conjecture}\label{conj:fullequiv}
If $\fun{F}$ is full, then it is an autoequivalence.
\end{conjecture}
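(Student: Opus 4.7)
Since the conclusion plainly fails for $\fun{F}\iso 0$, we assume $\fun{F}\not\iso 0$. Theorem \ref{thm:full} then guarantees that $\fun{F}$ is faithful, hence fully faithful. For any $\kf,\kg\in\Coh(X)$ and $j<0$ one has
\[
\Hom_{\Db(X)}(\fun{F}(\kf),\fun{F}(\kg)[j])\iso\Hom_{\Db(X)}(\kf,\kg[j])=0,
\]
so hypothesis \eqref{eqn:hyp} of Theorem \ref{thm:CS1} is satisfied and there exists $\ke\in\Db(X\times X)$, unique up to isomorphism, together with an isomorphism $\fun{F}\iso\FM{\ke}$. By Proposition \ref{prop:exadj} and Proposition \ref{prop:FMpropert}, the left and right adjoints $\fun{G}:=\FM{\ke_L}$ and $\fun{H}:=\FM{\ke_R}$ exist and are of Fourier--Mukai type.

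\textbf{The essential image is admissible.} Full faithfulness of $\fun{F}$ forces the counit $\fun{G}\comp\fun{F}\to\id$ and the unit $\id\to\fun{H}\comp\fun{F}$ to be isomorphisms, so $\kc:=\im(\fun{F})$ is a full admissible triangulated subcategory of $\Db(X)$ and one has a semi-orthogonal decomposition $\Db(X)=\ort{\lort{\kc},\kc}$. The conjecture therefore reduces to showing $\lort{\kc}=0$. The plan is to exploit all invariants attached to the Fourier--Mukai description of $\fun{F}$ to squeeze the complement $\lort{\kc}$ until it is forced to vanish.

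\textbf{Vanishing of invariants of $\lort{\kc}$.} I would first use the additivity of Hochschild (co)homology along semi-orthogonal decompositions. Combined with the fact that $\fun{F}$ restricts to an exact equivalence $\Db(X)\isomor\kc$, this forces $\HH_*(\lort{\kc})=0$ and $\HH^*(\lort{\kc})=0$. In parallel, full faithfulness together with Hirzebruch--Riemann--Roch yields $\chi(\ka,\kb)=\chi(\fun{F}(\ka),\fun{F}(\kb))$ for all $\ka,\kb\in\Db(X)$, and Theorem \ref{thm:MSHoch} combined with the HKR isomorphism turns this into the statement that $\FMH{\ke}$ is an isometric embedding of the numerical part of $H^*(X,\QQ)$ (with the Mukai pairing) into itself, hence a bijection for dimensional reasons. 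In particular, the numerical Grothendieck group of $\lort{\kc}$ also vanishes.

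\textbf{The main obstacle.} The decisive step is to upgrade the vanishing of the numerical and Hochschild invariants of $\lort{\kc}$ to the vanishing of $\lort{\kc}$ itself: this is precisely the \emph{no-phantom problem}. Nonzero admissible subcategories of $\Db(X)$ with trivial $K$-theory and trivial Hochschild homology are known to exist in several geometric contexts, so additivity of invariants alone cannot close the argument. To bypass this, one must exploit the unusual feature that $\kc$ is not a generic admissible subcategory but a copy of the whole ambient $\Db(X)$: (i) when $\omega_X$ or $\omega_X\dual$ is ample, Bondal--Orlov's reconstruction theorem closes the argument immediately, since $\kc\iso\Db(X)$ combined with compatibility with the Serre functor recognizes the inclusion as an equivalence; (ii) in the Calabi--Yau regime one can use the action on Hochschild cohomology and the deformation-theoretic constraints of Section \ref{subsubsec:Hochschild} together with the uniqueness of the Serre functor on $\kc$ and $\lort{\kc}$; (iii) the intermediate case, where neither technique directly applies and where one must genuinely rule out phantoms whose ``complement'' is a copy of $\Db(X)$, is where I expect the real difficulty to lie and where a new idea is presumably needed.
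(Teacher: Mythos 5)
The statement you were asked to prove is stated in the paper as an open conjecture, and the paper does not prove it: its entire treatment consists of Remark \ref{rmk:cantriv} (the case $\ds_X$ trivial), Proposition \ref{prop:conjimplconj} (a reduction to Kuznetsov's Noetherianity conjecture), and Proposition \ref{prop:curves} ($\dim X\le 1$). Your opening reduction coincides exactly with the first half of the paper's Proposition \ref{prop:conjimplconj}: exclude $\fun{F}\iso 0$, invoke Theorem \ref{thm:full} to get full faithfulness, Theorem \ref{thm:CS1} to get a kernel, Propositions \ref{prop:exadj} and \ref{prop:FMpropert} for the adjoints, and conclude that $\cat{I}:=\im\fun{F}$ is admissible, so that the problem becomes the vanishing of its orthogonal $\cat{J}$. (A small slip: with the paper's ordering convention for $\ort{-,-}$, the decomposition associated to $\lort{\cat{I}}$ is $\ort{\cat{I},\lort{\cat{I}}}$, not $\ort{\lort{\cat{I}},\cat{I}}$; the latter goes with $\rort{\cat{I}}$, which is what the paper uses.)

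From that point the two routes genuinely diverge, and neither closes the argument. The paper iterates the decomposition: since $\cat{I}\iso\Db(X)$, the functor restricts to an endofunctor of $\cat{I}$, producing $\Db(X)=\ort{\cat{J},\dots,\cat{J},\cat{I}}$ for every $n$ and hence a non-stabilizing ascending chain of admissible subcategories unless $\cat{J}=0$; the Noetherianity conjecture (Conjecture \ref{conj:noeth}) then finishes it, and is actually verified for curves. Your route instead tries to kill $\cat{J}$ directly by additivity of invariants, and you correctly diagnose that this collapses onto the no-phantom problem, which cannot be resolved by additive invariants alone. Two caveats on the details: Hochschild \emph{homology} is additive along semi-orthogonal decompositions, but Hochschild \emph{cohomology} is not, so the claim $\HH^*(\cat{J})=0$ is unjustified; and the assertion that Bondal--Orlov reconstruction ``immediately'' settles the (anti-)ample canonical class case is an over-claim --- reconstruction of $X$ from $\Db(X)$ does not by itself rule out a nonzero admissible complement of a copy of $\Db(X)$. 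On balance, your reduction is correct and your identification of the obstruction is honest, but the paper's reduction is the sharper one: it trades the conjecture for a concrete, independently studied finiteness statement (and thereby actually proves the result in dimension $\le 1$), whereas the phantom reformulation leaves you facing categories that are known to exist and are invisible to every invariant you propose to use.
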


Notice that we only need to show that $\fun{F}$ is essentially
surjective.

\begin{remark}\label{rmk:cantriv}
The conjecture is true if $\ds_X$ is trivial, because in that case
every fully faithful exact endofunctor of $\Db(X)$ is an equivalence
(see, for example, \cite[Cor.\ 7.8]{H}).
\end{remark}

The above conjecture is implied by another conjecture about
admissible subcategories that we want to explain here.

\medskip

Given a triangulated category $\cat{T}$ and a strictly full
triangulated subcategory $\cat{S}$, we say that $\cat{S}$ is
\emph{left-} (resp.\ \emph{right-}) \emph{admissible} in $\cat{T}$ if
the inclusion functor $\eta\colon\cat{S}\to\cat{T}$ has a left
(resp.\ right) adjoint $\eta^*\colon\cat{T}\to\cat{S}$
(resp.\ $\eta^!\colon\cat{T}\to\cat{S}$). If a subcategory is left
and right admissible, we say that it is \emph{admissible}.

\begin{remark}\label{rmk:leftright}
	By \cite[Prop.\ 1.6]{BK}, an admissible subcategory $\cat{S}\subseteq\cat{T}$ is thick as well.
\end{remark}

We can use the notion of admissible subcategory to `decompose'
triangulated categories. More generally, one can give the following.

\begin{definition}\label{def:semiorth}
A \emph{semi-orthogonal decomposition} of a triangulated category $\cat{T}$ is given by a sequence of full triangulated subcategories $\cat{A}_1,\ldots,\cat{A}_n\subseteq\cat{T}$ such that $\Hom_{\cat{T}}(\cat{A}_i,\cat{A}_j)=0$, for $i>j$ and, for all $K\in\cat{T}$, there exists a chain of morphisms in $\cat{T}$
\[
0=K_n\to K_{n-1}\to\ldots\to K_1\to K_0=K
\]
with $\cone{K_i\to K_{i-1}}\in\cat{A}_i$, for all $i=1,\ldots,n$.
We will denote such a decomposition by $\cat{T}=\ort{\cat{A}_1,\ldots,\cat{A}_n}$.
\end{definition}

The easiest examples of semi-orthogonal decompositions are constructed via exceptional objects.

\begin{definition}\label{def:excobjgeneral}
Assume that $\cat{T}$ is a $\K$-linear triangulated category. An object $E\in\cat{T}$ is called \emph{exceptional} if $\Hom_{\cat{T}}(E,E)\iso\K$ and $\Hom_{\cat{T}}(E,E[p])=0$, for all $p\neq0$.
A sequence $(E_1,\ldots,E_m)$ of objects in $\cat{T}$ is called an \emph{exceptional sequence} if $E_i$ is an exceptional object, for all $i$, and $\Hom_{\cat{T}}(E_i,E_j[p])=0$, for all $p$ and all $i>j$. An exceptional sequence is \emph{full} if it generates $\cat{T}$.
\end{definition}

\begin{remark}\label{rmk:semexc}
If $(E_1,\ldots,E_m)$ is a full exceptional sequence in $\cat{T}$, then we get a semi-orthogonal decomposition $\cat{T}=\ort{E_1,\ldots,E_n}$, where for simplicity we write $E_i$ for the triangulated subcategory generated by $E_i$, which is equivalent to $\Db(\spec\K)$ and is admissible in $\cat{T}$.
\end{remark}

\begin{ex}\label{ex:Pn}
A celebrated result of Beilinson shows that $\Db(\PP^n)$ has a full exceptional sequence $(\ko_{\PP^n}(-n),\ko_{\PP^n}(-n+1),\ldots,\ko_{\PP^n})$ (see, for example, \cite[Sect.\ 8.3]{H}).
\end{ex}

For a triangulated subcategory $\cat{S}$ of a triangulated category $\cat{T}$, we can define the strictly full triangulated subcategories (i.e.\ full and closed under isomorphism)
\[
\rort{\cat{S}}:=\left\{A\in\cat{T}:\Hom(S,A)=0,\text{ for all }S\in\cat{S}\right\}
\]
called \emph{right orthogonal} to $\cat{S}$ and its \emph{left orthogonal}
\[
\lort{\cat{S}}:=\left\{A\in\cat{T}:\Hom(A,S)=0,\text{ for all }S\in\cat{S}\right\}.
\]

One can formulate the following conjecture due to A.\ Kuznetsov and contained in \cite{K4}.

\begin{conjecture}\label{conj:noeth} {\bf (Noetherianity conjecture)}
	Let $X$ be a smooth projective variety and assume that there exists a sequence
\[
\cat{A}_1\subseteq\cat{A}_2\subseteq\ldots\subseteq\cat{A}_i\subseteq\ldots\subseteq\Db(X)
\]
of admissible subcategories. Then there is a positive integer $N$ such that $\cat{A}_i=\cat{A}_N$, for all $i\geq N$.
\end{conjecture}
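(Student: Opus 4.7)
The plan is to exploit the numerical invariants attached to admissible subcategories via the Fourier--Mukai formalism. For each $\cat{A}_i$ in the chain, admissibility yields a semi-orthogonal decomposition $\Db(X) = \ort{\rort{\cat{A}_i}, \cat{A}_i}$, and by Orlov's theorem (Theorem \ref{thm:Orlov}) the inclusion and both projections are Fourier--Mukai functors whose kernels $\kp_i \in \Db(X \times X)$ are idempotent under convolution. An inclusion $\cat{A}_i \subseteq \cat{A}_{i+1}$ then translates into a morphism $\kp_i \to \kp_{i+1}$ between such idempotents, so the whole chain becomes an ascending chain in $\Db(X \times X)$ with compatible idempotent structures.

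First I would pass to numerical invariants. The Grothendieck group $K_0(\cat{A}_i) \otimes \QQ$ is a direct summand of $K_0(X) \otimes \QQ$, which through the Chern character is finite-dimensional for $X$ smooth projective. Thus the chain of subspaces $K_0(\cat{A}_i) \otimes \QQ \subseteq K_0(X) \otimes \QQ$ must stabilize at some index $N_1$. By the additivity of Hochschild homology under semi-orthogonal decompositions combined with the HKR identification in Theorem \ref{thm:MSHoch}, the same is true for $\HH_*(\cat{A}_i) \subseteq \HH_*(X)$, which stabilizes at some $N_2$. One could in principle harvest finer invariants (Hochschild cohomology together with its action on homology, Euler forms on $K$-theory, Mukai vectors of the $\kp_i$ via Lemma \ref{lem:uniqcoh}), all of which would have to stabilize simultaneously.

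The hard part, and the real obstacle, is upgrading stabilization of these invariants to stabilization of the subcategories themselves. Corollary \ref{cor:Kcohom} pins down the class $[\kp_i] \in K(X \times X)$ from the underlying functor, but Theorem \ref{thm:nonuniq} shows that a Fourier--Mukai kernel is not determined by its cohomological data; in principle, a strictly increasing chain of idempotents $\kp_i$ could move through different isomorphism classes that all share the same numerical type. I would try to circumvent this by invoking the unique dg-enhancement of $\Dp(X)$ from Theorem \ref{thm:uniqen}, rephrasing the chain as a chain of strict sub-dg-categories of $\Perf(X)$ determined by the $\kp_i$ as idempotent quasi-functors, and then hoping to prove termination at the dg-level -- for instance, by showing that $\cone{\kp_i \to \kp_{i+1}}$ is acyclic once the invariants have stabilized and the two idempotents are compatible. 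A genuinely new input seems to be required precisely here: either a rigidity result for idempotent dg-bimodules on $X \times X$, or a boundedness principle specific to smooth projective varieties, to force $\cone{\kp_i \to \kp_{i+1}} \iso 0$ beyond some index and conclude that the categories themselves stabilize.
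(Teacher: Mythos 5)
First, be aware that this statement is not a theorem of the paper: it is Kuznetsov's Noetherianity conjecture, recorded here as an open problem, and the paper offers no proof of it in general. The only case established is Proposition \ref{prop:curves} ($\dim(X)\le 1$), and the argument there is entirely different from yours and completely elementary: one classifies all admissible subcategories directly (for a curve of genus $\ge 1$ the category $\Db(X)$ admits no nontrivial semi-orthogonal decomposition, and for $\PP^1$ every nontrivial admissible subcategory is generated by a single line bundle $\ko_{\PP^1}(j)$), so any chain visibly stabilizes. Your proposal, as you yourself concede in its last sentences, does not close the gap either, so it cannot be accepted as a proof.

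Beyond the honestly admitted missing step, two intermediate claims are already problematic. First, $K_0(X)\otimes\QQ$ is \emph{not} finite dimensional for a general smooth projective variety: the Chern character identifies it with the rational Chow ring, and by Mumford's theorem $CH_0$ of a surface with $p_g>0$ is infinite dimensional, so an ascending chain of subspaces $K_0(\cat{A}_i)\otimes\QQ$ need not stabilize. You would have to pass to the numerical Grothendieck group (or to $\HH_*$, which is additive and finite dimensional), and even then these invariants are far too coarse. Second, the passage from the inclusion $\cat{A}_i\subseteq\cat{A}_{i+1}$ to a morphism of kernels $\kp_i\to\kp_{i+1}$ in $\Db(X\times X)$ presupposes that natural transformations of Fourier--Mukai functors lift to morphisms of kernels, i.e.\ a positive answer to (Q4); Proposition \ref{nonfaith} shows that the kernel-to-functor assignment is neither full nor faithful in general, so this lift is not available. (Also, the existence of kernels for the projection functors is Theorem \ref{thm:proj}, not Theorem \ref{thm:Orlov}: the projection onto an admissible subcategory is a splitting functor, not a fully faithful one.) The decisive difficulty --- that stabilization of all numerical and homological invariants does not force $\cone{\kp_i\to\kp_{i+1}}\iso 0$, as Theorem \ref{thm:nonuniq} already warns that kernels are not controlled by such data --- is precisely where the conjecture remains open, and nothing in the paper supplies the rigidity result for idempotent kernels that your strategy would require.
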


\begin{remark}\label{rmk:otherdirect}
	Considering the strictly full triangulated subcategories $\cat{B}_i:=\rort{\cat{A}_i}$, the above conjecture can be equivalently reformulated in terms of stabilizing descending chains.
\end{remark}

\begin{prop}\label{prop:conjimplconj}
	Conjecture \ref{conj:noeth} implies Conjecture \ref{conj:fullequiv}.
\end{prop}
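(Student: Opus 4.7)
The plan is to reduce the statement to an iteration argument that feeds directly into Conjecture \ref{conj:noeth}. First, since $\fun{F}$ is full and (excluding the trivial case) not isomorphic to zero, Theorem \ref{thm:full} already gives that $\fun{F}$ is fully faithful, so only essential surjectivity has to be shown.

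For every $n\geq 0$, I would consider the essential image $\cat{B}_n:=\fun{F}^n(\Db(X))$ (closed up under isomorphism), a strictly full triangulated subcategory of $\Db(X)$. Each iterate $\fun{F}^n$ is again fully faithful, and by Proposition \ref{prop:exadj} admits both a left adjoint $\fun{G}_n$ and a right adjoint $\fun{H}_n$. A routine Yoneda-style check, using that $\cat{B}_n\mono\Db(X)$ is full and that $\fun{F}^n$ induces an equivalence $\Db(X)\isomor\cat{B}_n$, shows that $\fun{F}^n\comp\fun{G}_n$ and $\fun{F}^n\comp\fun{H}_n$ (with values in $\cat{B}_n$) are respectively left and right adjoint to the inclusion; hence $\cat{B}_n$ is admissible. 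Since $\cat{B}_{n+1}=\fun{F}^n(\fun{F}(\Db(X)))\subseteq\fun{F}^n(\Db(X))=\cat{B}_n$, these subcategories form a descending chain
\[
\Db(X)=\cat{B}_0\supseteq\cat{B}_1\supseteq\cat{B}_2\supseteq\cdots
\]
of admissible subcategories of $\Db(X)$.

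By Conjecture \ref{conj:noeth}, applied in the equivalent descending formulation recalled in Remark \ref{rmk:otherdirect} (or, equivalently, to the ascending chain of right orthogonals $\rort{\cat{B}_n}$), this chain must stabilize: there exists $N\geq 0$ with $\cat{B}_N=\cat{B}_{N+1}$. To conclude essential surjectivity of $\fun{F}$, take an arbitrary $Y\in\Db(X)$ and note that $\fun{F}^N(Y)\in\cat{B}_N=\cat{B}_{N+1}$, so there exists $Z\in\Db(X)$ with $\fun{F}^N(Y)\iso\fun{F}^{N+1}(Z)=\fun{F}^N(\fun{F}(Z))$. Fully faithfulness of $\fun{F}^N$ (which implies that it reflects isomorphisms) then yields $Y\iso\fun{F}(Z)\in\cat{B}_1$. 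Hence $\cat{B}_1=\Db(X)$, so $\fun{F}$ is essentially surjective, and being already fully faithful it is an autoequivalence.

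The genuine difficulty is of course entirely concentrated in the appeal to Conjecture \ref{conj:noeth}, which remains wide open; the rest of the argument is purely formal, relying only on the existence of adjoints from Proposition \ref{prop:exadj} and the standard fact that the essential image of a fully faithful functor with both adjoints is admissible. The only place that requires a little care is ensuring that the $\cat{B}_n$ are indeed strictly full admissible subcategories and that the inclusions $\cat{B}_{n+1}\subseteq\cat{B}_n$ hold in the category $\Db(X)$ itself (not just up to the equivalence induced by $\fun{F}^n$), but this is straightforward from the construction.
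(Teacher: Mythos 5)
Your proof is correct and is essentially the paper's argument in dual form: the paper applies Conjecture \ref{conj:noeth} to the ascending chain $\cat{A}_n=\rort{(\im\fun{F}^n)}$ generated by copies of $\cat{J}=\rort{(\im\fun{F})}$, whereas you apply the descending reformulation of Remark \ref{rmk:otherdirect} directly to the chain of essential images $\im\fun{F}^n$. Since the two chains are exchanged by taking orthogonals, the substance of the argument is the same.
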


\begin{proof}
The functor $\fun{F}$ is automatically faithful. Thus
\[
\cat{I}:=\im\fun{F}:=\{\ke\in\Db(X):\ke\iso\fun{F}(\kf)\text{ for some } \kf\in\Db(X)\}
\]
is a strictly full triangulated subcategory of $\Db(X)$. By Proposition \ref{prop:exadj}, the functor $\fun{F}$ has left and right adjoints and so $\cat{I}$ is admissible. Using the above notation, set $\cat{J}=\rort{\cat{I}}$. Hence we have a semi-orthogonal decomposition
\[
\Db(X)=\ort{\cat{J},\cat{I}}.
\]

As $\cat{I}\iso\Db(X)$, we can think of $\fun{F}$ as an exact endofunctor of $\cat{I}$. Hence, reasoning as above we get a semi-orthogonal decomposition
\[
\Db(X)=\ort{\cat{J},\cat{J},\cat{I}}
\]
Hence, given a positive integer $n$, repeating this argument $n$ times we get that
\[
\cat{A}_n:=\ort{\underbrace{\cat{J},\ldots,\cat{J}}_{n \text{times}}}
\]
is a strictly full admissible triangulated subcategory of $\Db(X)$.

Since $\cat{A}_p\subseteq\cat{A}_q\subseteq\Db(X)$ whenever $p\leq q$, by Conjecture \ref{conj:noeth}, this sequence must stabilize. Hence $\cat{J}=0$ and so $\fun{F}$ is essentially surjective.
\end{proof}

Due to the following easy result, a full endofunctor is automatically an equivalence when $X$ has dimension at most $1$.

\begin{prop}\label{prop:curves}
	Conjecture \ref{conj:noeth} holds true when $X$ is a smooth projective variety of dimension smaller or equal to $1$.
\end{prop}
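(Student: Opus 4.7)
The plan is to split the analysis according to $\dim(X)$ and, in the curve case, according to the genus, showing in each case that the collection of admissible subcategories of $\Db(X)$ is restricted enough for ascending chains to terminate automatically. In dimension zero, $X$ is a finite disjoint union $\bigsqcup_{i=1}^n \spec K_i$ with each $K_i$ a finite field extension of $\K$, and $\Db(X)$ decomposes as a finite product of semisimple categories $\Db(\fMod{K_i})$. Each factor has no non-trivial admissible subcategories, so the admissible subcategories of $\Db(X)$ correspond bijectively to subsets of $\{1,\ldots,n\}$; there are only finitely many of them, and any chain stabilizes trivially.

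For $\dim(X)=1$ and $g(X)\geq 1$, I will show that the only admissible subcategories of $\Db(X)$ are $0$ and $\Db(X)$ itself, so any ascending chain has at most two distinct terms. This is the indecomposability of $\Db(X)$ for curves of positive genus: assuming a non-trivial semi-orthogonal decomposition $\Db(X)=\ort{\cat{B},\cat{A}}$, the fact that $\omega_X$ is trivial (for $g=1$) or globally generated (for $g\geq 2$), together with Serre duality, forces a non-zero morphism from an object of $\cat{A}$ to an object of $\cat{B}$ via a generic section of a suitable power of $\omega_X$, contradicting semi-orthogonality.

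For $X=\PP^1$, I use the Beilinson decomposition $\Db(\PP^1)=\ort{\ko_{\PP^1}(-1),\ko_{\PP^1}}$ from Example \ref{ex:Pn}. By a mutation argument, any admissible subcategory $\cat{A}\subseteq\Db(\PP^1)$ is itself generated by an exceptional sequence of length at most two, so $\cat{A}$ is either $0$, generated by a single exceptional object, or the whole of $\Db(\PP^1)$; in particular $\rk K(\cat{A})\in\{0,1,2\}$. A strict inclusion $\cat{A}_1\subsetneq\cat{A}_2$ of admissible subcategories of $\Db(\PP^1)$ makes $\cat{A}_1$ admissible in $\cat{A}_2$ (the left and right adjoints of the inclusion $\cat{A}_1\hookrightarrow\Db(X)$ restrict), and the non-zero semi-orthogonal complement contributes at least one new exceptional generator; hence the rank increases strictly. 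Since it is bounded by $2$, any strictly ascending chain has length at most three.

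The principal obstacle is the case $g\geq 1$: whereas $\dim(X)=0$ and $X=\PP^1$ reduce to elementary finiteness or rank arguments via explicit full exceptional collections, the positive-genus case hinges on the absence of non-trivial semi-orthogonal decompositions of $\Db(X)$, a genuine structural property of the category rather than a formal consequence of admissibility.
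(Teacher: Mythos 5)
Your overall strategy matches the paper's: treat dimension zero as (essentially) trivial, invoke indecomposability of $\Db(X)$ for curves of genus $\geq 1$, and then control the admissible subcategories of $\Db(\PP^1)$. For the positive-genus case the paper simply cites Bridgeland's Example 3.2 ($g=1$) and Okawa's theorem ($g\geq 2$); your Serre-duality sketch is fine for $g=1$ (triviality of $\omega_X$ makes any semi-orthogonal decomposition completely orthogonal, and connectedness finishes), but for $g\geq 2$ the one-line appeal to a ``generic section of a power of $\omega_X$'' is compressing a genuine theorem of Okawa whose proof is not that short; citing it, as the paper does, is the safe move.

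The step I cannot accept as written is the assertion that ``by a mutation argument, any admissible subcategory $\cat{A}\subseteq\Db(\PP^1)$ is generated by an exceptional sequence of length at most two.'' This is the crux of the $\PP^1$ case, and it is not a formal consequence of mutation theory: whether every admissible subcategory of a category with a full exceptional collection is itself generated by exceptional objects is a delicate question (it fails in higher dimension because of phantom subcategories), so it must be proved for $\PP^1$, not asserted. The paper's proof is exactly this missing classification: if $\Db(\PP^1)=\ort{\cat{A}_1,\cat{A}_2}$ is non-trivial, one of the two pieces contains a locally free sheaf; by Grothendieck's splitting theorem and thickness of admissible subcategories (Remark \ref{rmk:leftright}) it then contains some line bundle $\ko_{\PP^1}(j)$; since $\Db(\PP^1)=\ort{\ko_{\PP^1}(j),\ko_{\PP^1}(j+1)}$ and the subcategory generated by a single exceptional object is equivalent to $\Db(\spec\K)$, hence has no proper thick subcategories, the complement is forced to equal the subcategory generated by $\ko_{\PP^1}(j+1)$, and the two pieces are exactly $\ort{\ko_{\PP^1}(j)}$ and $\ort{\ko_{\PP^1}(j+1)}$. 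Once that classification is available, your $K$-theoretic finish (strict inclusions of admissible subcategories strictly increase $\rk K$, which is bounded by $2$, so chains have length at most three) is valid and arguably cleaner than the paper's conclusion --- but it is downstream of the classification, not a substitute for it. Supply the splitting-plus-thickness argument and your proof is complete.
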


\begin{proof}
Obviously the Conjecture is trivially true if $\Db(X)$ does not admit a non-trivial semi-orthogonal decomposition and this is the case if $\dim(X)=0$.

If $X$ is a curve of genus $1$, Serre duality and \cite[Example 3.2]{Br} implies that $\Db(X)$ cannot be decomposed. The same is true when $X$ is a curve of genus $g\geq 2$ due to \cite{Ok}. Thus the only case that has to be checked is $X\iso\PP^1$.

For this assume that $\Db(\PP^1)=\ort{\cat{A}_1, \cat{A}_2}$, where $\cat{A}_i$ is not trivial, for $i=1,2$ (i.e.\ non-zero and not the whole category $\Db(\PP^1)$). It is clear that either $\cat{A}_1$ or $\cat{A}_2$ must contain a locally free sheaf $E$. As on $\PP^1$ any locally free sheaf is the direct sum of line bundles and $\cat{A}_i$ is thick (see Remark \ref{rmk:leftright}(i)), there is $j\in\ZZ$ such that $\ko_{\PP^1}(j)\in\cat{A}_i$, for $i=1$ or $i=2$. We assume $i=1$ as the argument in the other case is similar.

Now $\cat{A}_2=\lort{\cat{A}_1}\subseteq\lort{\ort{\ko_{\PP^1}(j)}}=\ort{\ko_{\PP^1}(j+1)}$. But $\ort{\ko_{\PP^1}(j+1)}\iso\Db(\spec{\K})$ and so it does not contain proper thick subcategories. Thus $\cat{A}_2=\ort{\ko_{\PP^1}(j+1)}$ and $\cat{A}_1=\ort{\ko_{\PP^1}(j)}$. Therefore, there cannot be non-stabilizing ascending chains of admissible subcategories.
\end{proof}

\subsection{Splitting functors}\label{subsec:splitting}

Kuznetsov introduced in \cite{K2} the notion of splitting functor as a natural generalization of fully faithful functor. The expectation was that, in this context, one should get a representability result similar to Theorem \ref{thm:Orlov}. Let us clarify the situation a bit more.

\medskip

More precisely, given two triangulated categories $\cat{T}_1$ and $\cat{T}_2$ and an exact functor
$\fun{F}\colon\cat{T}_1\to\cat{T}_2$, we can define the following full subcategories
\[
\ker\fun{F}:=\{A\in\cat{T}_1:\fun{F}(A)\iso0\}\qquad\im\fun{F}:=\{A\iso\fun{F}(B):B\in\cat{T}_1\}.
\]

\begin{remark}\label{rmk:kerim}
The subcategory $\ker\fun{F}$ is always triangulated while $\im\fun{F}$, in general, is not. It becomes triangulated if $\fun{F}$ is fully faithful.
\end{remark}

Hence we can give the following.

\begin{definition}\label{def:splitting}
An exact functor $\fun{F}\colon\cat{T}_1\to\cat{T}_2$ is \emph{right}
(respectively \emph{left}) \emph{splitting} if $\ker\fun{F}$ is a
right (respectively left) admissible subcategory in $\cat{T}_1$, the
restriction of $\fun{F}$ to $\rort{(\ker\fun{F})}$ (respectively
$\lort{(\ker\fun{F})}$) is fully faithful, and the category
$\im\fun{F}$ is right (respectively left) admissible in $\cat{T}_2$.

An exact functor is \emph{splitting} if it is both right and left
splitting.
\end{definition}

\begin{remark}\label{adjspl}
As observed in \cite[Lemma 3.2]{K2}, a right (respectively left) splitting functor $\fun{F}$ has a right (respectively left) adjoint functor $\fun{F}^!$ (respectively $\fun{F}^*$).
\end{remark}

We summarize the basic properties of these functors in the following.

\begin{thm}\label{thm:splittprop}{\bf(\cite{K2}, Theorem 3.3.)}
Let $\fun{F}\colon\cat{T}_1\to\cat{T}_2$ be an exact functor.
Then the following conditions are equivalent:
\begin{itemize}
\item[{\rm (i)}] $\fun{F}$ is right splitting;

\item[{\rm (ii)}] $\fun{F}$ has a right adjoint functor $\fun{F}^!$
and the composition of the canonical morphism of functors
$\id_{\cat{T}_1}\to\fun{F}^!\comp\fun{F}$ with $\fun{F}$ gives an isomorphism
$\fun{F}\iso\fun{F}\comp\fun{F}^!\comp\fun{F}$;

\item[{\rm (iii)}] $\fun{F}$ has a right adjoint functor $\fun{F}^!$,
there are semi-orthogonal decompositions
\[
\cat{T}_1=\ort{\im\fun{F}^!,\ker\fun{F}},
\qquad
\cat{T}_2=\ort{\ker\fun{F}^!,\im\fun{F}},
\]
and the functors $\fun{F}$ and $\fun{F}^!$ give quasi-inverse equivalences
$\im\fun{F}^!\iso\im\fun{F}$;

\item[{\rm (iv)}] There exists a triangulated category $\cat{S}$ and fully faithful
functors $\fun{G}_1\colon\cat{S}\to\cat{T}_1$, $\fun{G}_2\colon\cat{S}\to\cat{T}_2$, such that $\fun{G}_1$ admits a left adjoint $\fun{G}_1^*$, $\fun{G}_2$ admits a right adjoint and $\fun{F}\iso\fun{G}_2\comp\fun{G}_1^*$.
\end{itemize}
\end{thm}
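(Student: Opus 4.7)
The plan is to prove the equivalences by running the cycle $(\text{i})\Rightarrow(\text{ii})\Rightarrow(\text{iii})\Rightarrow(\text{iv})\Rightarrow(\text{i})$. I shall use throughout the standard fact that a right (resp.\ left) admissible subcategory $\cat{B}\subseteq\cat{T}$ yields a semi-orthogonal decomposition $\cat{T}=\ort{\rort{\cat{B}},\cat{B}}$ (resp.\ $\cat{T}=\ort{\cat{B},\lort{\cat{B}}}$), whose filtration triangle is extracted from the counit (resp.\ unit) of the relevant adjunction.

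For $(\text{i})\Rightarrow(\text{ii})$, Remark~\ref{adjspl} supplies a right adjoint $\fun{F}^!$. For $A\in\cat{T}_1$, the decomposition of $\cat{T}_1$ gives a functorial triangle $K\to A\to C$ with $K\in\ker\fun{F}$ and $C\in\rort{\ker\fun{F}}$; applying $\fun{F}$ yields $\fun{F}(A)\iso\fun{F}(C)$. Since $\fun{F}\rest{\rort{\ker\fun{F}}}$ is fully faithful, the unit $\eta_C\colon C\to\fun{F}^!\fun{F}(C)$ is an isomorphism (testing against $X$ in each piece of the decomposition and invoking Yoneda), and naturality of $\eta$ along $A\to C$ then forces $\fun{F}\eta_A$ to be invertible as well.

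For $(\text{ii})\Rightarrow(\text{iii})$, the hypothesis is equivalent via the triangle identities to $\varepsilon\fun{F}$ being invertible. Completing $\eta_A$ to a triangle $A\to\fun{F}^!\fun{F}(A)\to C_A$ and applying $\fun{F}$ shows $C_A\in\ker\fun{F}$; the vanishing $\Hom(\ker\fun{F},\im\fun{F}^!)=0$ is immediate from the $\fun{F}\dashv\fun{F}^!$ adjunction, and together these produce $\cat{T}_1=\ort{\im\fun{F}^!,\ker\fun{F}}$. The dual decomposition of $\cat{T}_2$ and the equivalence $\fun{F}\colon\im\fun{F}^!\isomor\im\fun{F}$ with quasi-inverse $\fun{F}^!$ both reduce to showing that $\eta_{\fun{F}^!(B)}$ is invertible for every $B\in\cat{T}_2$. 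The triangle identity makes $\fun{F}^!\varepsilon_B$ retract $\eta_{\fun{F}^!(B)}$, so the corresponding triangle splits and the cofiber $E$ is a direct summand of $\fun{F}^!\fun{F}\fun{F}^!(B)$; applying $\fun{F}$ and using that $\varepsilon\fun{F}$ is iso gives $\fun{F}(E)=0$, and then
\[
\Hom(E,E)\hookrightarrow\Hom(E,\fun{F}^!\fun{F}\fun{F}^!(B))\iso\Hom(\fun{F}(E),\fun{F}\fun{F}^!(B))=0
\]
by the adjunction, forcing $E=0$.

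For $(\text{iii})\Rightarrow(\text{iv})$, I set $\cat{S}:=\im\fun{F}^!$, take $\fun{G}_1$ to be the inclusion into $\cat{T}_1$ (left admissible with left adjoint $\fun{G}_1^*$ given by the projection $\fun{F}^!\fun{F}$ of the first decomposition), and $\fun{G}_2$ to be the composition of the equivalence $\cat{S}\iso\im\fun{F}$ with the inclusion into $\cat{T}_2$ (right admissible by the second decomposition, hence endowed with a right adjoint); one then checks $\fun{G}_2\comp\fun{G}_1^*=\fun{F}\comp\fun{F}^!\comp\fun{F}\iso\fun{F}$ via $\varepsilon\fun{F}$. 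For $(\text{iv})\Rightarrow(\text{i})$, left admissibility of $\im\fun{G}_1$ gives $\cat{T}_1=\ort{\im\fun{G}_1,\ker\fun{G}_1^*}$, and conservativity of the fully faithful $\fun{G}_2$ identifies $\ker\fun{F}=\ker\fun{G}_1^*$, which is then right admissible; the equality $\rort{\ker\fun{F}}=\im\fun{G}_1$ is checked by observing that for $T\in\rort{\ker\fun{F}}$ the first map of the decomposition triangle of $T$ vanishes, so the triangle splits and $T$ sits inside the thick subcategory $\im\fun{G}_1$ as a direct summand. Full faithfulness of $\fun{F}\rest{\im\fun{G}_1}=\fun{G}_2\comp(\fun{G}_1^*\rest{\im\fun{G}_1})$ follows from $\fun{G}_1^*\rest{\im\fun{G}_1}$ being a quasi-inverse of $\fun{G}_1$, and $\im\fun{F}=\im\fun{G}_2$ is right admissible since $\fun{G}_2$ has a right adjoint.

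The main technical obstacle is the step $(\text{ii})\Rightarrow(\text{iii})$, and within it the verification that $\eta_{\fun{F}^!(B)}$ is invertible, because this single fact underwrites both the second semi-orthogonal decomposition and the claim that $\fun{F}^!$ inverts $\fun{F}$ on $\im\fun{F}^!$; the argument requires simultaneous use of both triangle identities together with the $\fun{F}\dashv\fun{F}^!$ adjunction. The remaining implications, while not entirely formal, reduce to careful tracking of which of $\rort{\cdot}$ and $\lort{\cdot}$ matches each admissibility and to repeated use of the decomposition triangles.
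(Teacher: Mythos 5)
The paper gives no proof of this statement---it is quoted directly from \cite{K2}, Theorem 3.3---so there is no internal argument to compare against; your write-up has to stand on its own, and it does. The cycle $(\text{i})\Rightarrow(\text{ii})\Rightarrow(\text{iii})\Rightarrow(\text{iv})\Rightarrow(\text{i})$ is correct and follows the same lines as Kuznetsov's original proof: the crux, which you isolate accurately, is that the triangle identity exhibits $\eta_{\fun{F}^!(B)}$ as a split monomorphism whose cofiber is annihilated by $\fun{F}$ and hence vanishes by the adjunction computation $\Hom(E,E)\hookrightarrow\Hom(\fun{F}(E),\fun{F}\fun{F}^!(B))=0$. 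Two points are compressed but repairable in one line each. First, in $(\text{ii})\Rightarrow(\text{iii})$ you should record that $\im\fun{F}^!$ and $\im\fun{F}$ are strictly full \emph{triangulated} subcategories (needed even to state the semi-orthogonal decompositions); this follows because they coincide with the loci where the unit, respectively the counit, is invertible. Second, in $(\text{iv})\Rightarrow(\text{i})$ you appeal to thickness of $\im\fun{G}_1$; this is true (a one-sided admissible subcategory equals a double orthogonal, hence is closed under summands), but it is cleaner to note that the splitting gives $A_1\iso T\oplus K_1[1]$ with $K_1[1]\in\ker\fun{F}=\lort{(\im\fun{G}_1)}$, so the split inclusion $K_1[1]\to A_1$ is zero and $T\iso A_1\in\im\fun{G}_1$ directly, with no thickness needed.
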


Clearly, one can formulate analogous conditions for left splitting functors. The main conjecture is now the following:

\begin{conjecture}\label{conj:splitt}{\bf(\cite{K2}, Conjecture 3.7.)}
Let $X_1$ and $X_2$ be smooth projective varieties. Then any exact splitting functor $\fun{F}\colon\Db(X_1)\to\Db(X_2)$ is of Fourier--Mukai type.
\end{conjecture}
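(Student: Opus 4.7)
My plan is to combine the structural result for splitting functors in Theorem \ref{thm:splittprop} with the dg-category machinery of Lunts--Orlov and the gluing theorem of To\"en (Theorem \ref{thm:Toen}), reducing the construction of a Fourier--Mukai kernel to the problem of lifting an explicit composition of three functors to the dg-enhancement.

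By Theorem \ref{thm:splittprop}(iii) applied to the splitting functor $\fun{F}$, we have semi-orthogonal decompositions
\[
\Db(X_1)=\ort{\im\fun{F}^!,\ker\fun{F}},\qquad\Db(X_2)=\ort{\ker\fun{F}^!,\im\fun{F}},
\]
and $\fun{F}$ restricts to a quasi-inverse equivalence $\phi\colon\cat{S}_1:=\im\fun{F}^!\isomor\cat{S}_2:=\im\fun{F}$. Letting $\pi\colon\Db(X_1)\to\cat{S}_1$ be the projection right adjoint to the inclusion of $\cat{S}_1$, and $\iota\colon\cat{S}_2\mono\Db(X_2)$ the inclusion of $\cat{S}_2$, we have a canonical isomorphism $\fun{F}\iso\iota\comp\phi\comp\pi$. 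The first step therefore reduces the conjecture to representing each of these three pieces in a way compatible with dg-enhancements.

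To execute this I would lift everything to the dg-level. Since both $X_i$ are smooth projective, by Theorem \ref{thm:uniqen} the category $\Dp(X_i)=\Db(X_i)$ admits a unique enhancement $\Perf(X_i)$. Each admissible subcategory $\cat{S}_i\subseteq\Db(X_i)$ inherits a natural pre-triangulated dg-enhancement $\cat{S}_i^{\dg}\subseteq\Perf(X_i)$; the inclusions $\iota$ and the projections $\pi$ admit obvious dg-analogues (the dg-inclusion and its adjoint obtained via the dg-Yoneda construction), and the equivalence $\phi$ should lift to a quasi-equivalence $\phi^{\dg}\colon\cat{S}_1^{\dg}\isomor\cat{S}_2^{\dg}$. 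Composing these lifts would give a quasi-functor $\dgfun{F}\colon\Perf(X_1)\to\Perf(X_2)$ with $H^0(\dgfun{F})\iso\fun{F}$. Then To\"en's Theorem \ref{thm:Toen} produces an object $\ke\in\Dg(X_1\times X_2)$ with $\FMdg{\ke}\iso\dgfun{F}$, and passing to $H^0$ yields $\FM{\ke}\iso\fun{F}$. Boundedness of $\ke$ then follows from Lemma \ref{lem:boundcoh} (applied to the fact that $\fun{F}$ sends $\Db(X_1)$ to $\Db(X_2)$), and uniqueness of $\ke$ up to isomorphism would be an additional bonus obtained either via Remark \ref{rmk:LO} applied to the fully faithful piece $\iota\comp\phi$, or via a direct cohomology-sheaves argument in the spirit of Theorem \ref{thm:cohm}.

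The hard part is the lifting step, and more specifically the lift of the equivalence $\phi$. The Lunts--Orlov machinery underlying Theorems \ref{thm:LO} and \ref{thm:uniqen} crucially exploits an ample sequence of locally free sheaves on the source scheme together with convolutions of such objects; neither is immediately available for a general admissible subcategory $\cat{S}_1\subseteq\Db(X_1)$, which need not contain an ample sequence and need not be of the form $\Db(Y)$ for any variety $Y$. Equivalently, what is needed is a uniqueness statement for dg-enhancements of admissible subcategories of $\Db(X)$, which would guarantee that any triangulated equivalence between them lifts to a quasi-equivalence. Proving such uniqueness---for instance by realizing each admissible subcategory as the homotopy image of a split dg-idempotent on $\Perf(X)$ attached to the projection $\pi$, and then transferring the uniqueness statement from $\Perf(X)$---seems to be the principal point where new input is required; once available, the remainder of the argument should be essentially formal.
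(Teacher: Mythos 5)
The statement you are asked to prove is an open conjecture (due to Kuznetsov), and the paper does not prove it: it only records partial cases, namely the curve case (Proposition \ref{prop:splittingcurves}), the projection functors $\eta\comp\eta^!$ and $\eta\comp\eta^*$ attached to an admissible subcategory (Theorem \ref{thm:proj}), and the case where the category $\cat{S}$ of Theorem \ref{thm:splittprop}(iv) is itself of the form $\Db(Y)$ for a smooth projective $Y$. Your plan reproduces the standard reduction via Theorem \ref{thm:splittprop}(iii)--(iv), writing $\fun{F}\iso\iota\comp\phi\comp\pi$, and then correctly locates the obstruction in the dg-lift of the equivalence $\phi\colon\im\fun{F}^!\to\im\fun{F}$. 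But this is exactly the point the paper flags as open: the Lunts--Orlov machinery requires an ample sequence and convolutions on the source category, and a general admissible subcategory of $\Db(X_1)$ carries neither (this is why the paper says the conjecture is tied to Problems \ref{prob:ample} and \ref{prob:conv}). So what you have is an honest restatement of the difficulty, not a proof.

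Moreover, the ``new input'' you propose would not close the gap even if it were available. Uniqueness of enhancements, in the sense used in Theorem \ref{thm:uniqen}, only asserts the existence of \emph{some} quasi-functor $\gamma$ between the two enhancements such that $H^0(\gamma)$ is an exact equivalence; it does not assert that the \emph{given} equivalence $\phi$ is isomorphic to $H^0(\gamma)$ for a quasi-functor $\gamma$. The question of whether every exact functor or equivalence admits a dg-lift is precisely the lifting conjecture of \cite{BLL} recalled after questions (Q1)--(Q2), and it is essentially equivalent to the Fourier--Mukai representability problem you are trying to solve; invoking it for the admissible pieces is therefore circular. A genuine solution would have to either construct the lift of $\phi$ by a new mechanism internal to $\cat{S}_1$ (for instance an analogue of an ample sequence and of convolutions inside an admissible subcategory), or avoid the dg-route entirely.
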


One may first wonder why the strategy outlined in Section \ref{subsec:ingredients} may not be applied in this case. The main problem is that convolutions do not work for this kind of functors. Alternatively, one would need to define an analogue of the ample sequence in Section \ref{subsubsec:ample} for the subcategory $\cat{S}$ in part (iv) of Theorem \ref{thm:splittprop}. Hence, the solution to Conjecture \ref{conj:splitt} is closely related to Problems \ref{prob:ample} and \ref{prob:conv}.

Nevertheless, there are several instances in which the conjecture is verified. The easiest one is when the category $\cat{S}$ mentioned in Theorem \ref{thm:splittprop}(iv) is such that $\cat{S}\iso\Db(Y)$, for some smooth projective variety $Y$. Indeed, in this case, one reduces the proof to Theorem \ref{thm:Orlov} (using Proposition \ref{prop:FMpropert}).

Moreover, it is not difficult to observe that, using the same type of arguments as in the proof of Proposition \ref{prop:curves}, one can show the following (the zero-dimensional case is trivial).

\begin{prop}\label{prop:splittingcurves}
Let either $X_1$ or $X_2$ be a smooth projective curve. Then any splitting functor $\fun{F}\colon\Db(X_1)\to\Db(X_2)$ is of Fourier--Mukai type.
\end{prop}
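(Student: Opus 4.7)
The plan is to apply part (iv) of Theorem \ref{thm:splittprop} to factor $\fun{F}$ through an intermediate triangulated category $\cat{S}$, and then reduce to the case---already dealt with in the paragraph preceding the proposition---where $\cat{S}\iso\Db(Y)$ for some smooth projective variety $Y$. The key observation is that when one of the $X_i$ is a curve, the admissible subcategory of $\Db(X_i)$ isomorphic to $\cat{S}$ is so rigid that this hypothesis is automatic.

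First I would apply Theorem \ref{thm:splittprop}(iv): there exist a triangulated category $\cat{S}$ and fully faithful exact functors $\fun{G}_i\colon\cat{S}\to\Db(X_i)$ for $i=1,2$, such that $\fun{G}_1$ admits a left adjoint $\fun{G}_1^*$, $\fun{G}_2$ admits a right adjoint, and $\fun{F}\iso\fun{G}_2\comp\fun{G}_1^*$. The essential image $\im\fun{G}_1\iso\cat{S}$ is an admissible subcategory of $\Db(X_1)$, and similarly $\im\fun{G}_2\iso\cat{S}$ is admissible in $\Db(X_2)$. We may also assume $\fun{F}\not\iso0$ (since the zero functor is trivially of Fourier--Mukai type, with kernel $0$), so that $\cat{S}\ne0$.

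Next I would classify $\cat{S}$ using the analysis carried out in the proof of Proposition \ref{prop:curves}. Assume without loss of generality that $X_1$ is a smooth projective curve. If $X_1$ has genus $\ge1$, then by \cite{Br, Ok} $\Db(X_1)$ admits no nontrivial semi-orthogonal decomposition, so the only nonzero admissible subcategory is $\Db(X_1)$ itself; thus $\cat{S}\iso\Db(X_1)$, and $Y:=X_1$ does the job. If $X_1\iso\PP^1$, then the argument recalled in the proof of Proposition \ref{prop:curves} shows that any nonzero proper admissible subcategory of $\Db(\PP^1)$ is of the form $\ort{\ko_{\PP^1}(j)}\iso\Db(\spec\K)$ for some $j\in\ZZ$; so $\cat{S}$ is equivalent either to $\Db(\spec\K)$ or to $\Db(\PP^1)$. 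In every case we obtain $\cat{S}\iso\Db(Y)$ for a smooth projective variety $Y$ (namely $\spec\K$, $\PP^1$, or the curve in question). The argument when $X_2$ is a curve is entirely symmetric, using the admissibility of $\im\fun{G}_2$ in $\Db(X_2)$.

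Finally, with $\cat{S}\iso\Db(Y)$ established, I would invoke the reduction already indicated in the text. Both $\fun{G}_1,\fun{G}_2\colon\Db(Y)\to\Db(X_i)$ are fully faithful between bounded derived categories of smooth projective varieties, and by Proposition \ref{prop:exadj} they admit both adjoints; hence Theorem \ref{thm:Orlov} shows that $\fun{G}_1$ and $\fun{G}_2$ are of Fourier--Mukai type. By Proposition \ref{prop:FMpropert}(i), the left adjoint $\fun{G}_1^*$ is then also of Fourier--Mukai type, and by Proposition \ref{prop:FMpropert}(ii) so is the composition $\fun{F}\iso\fun{G}_2\comp\fun{G}_1^*$. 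There is no genuine obstacle here: the only mildly delicate point is the classification of admissible subcategories of $\Db(\PP^1)$, but this is already contained in the proof of Proposition \ref{prop:curves}, so the whole argument is essentially a formal assembly of earlier results.
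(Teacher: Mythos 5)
Your proof is correct and is essentially the argument the paper has in mind: it assembles the factorization from Theorem \ref{thm:splittprop}(iv), the classification of admissible subcategories of $\Db(X)$ for a curve $X$ carried out in the proof of Proposition \ref{prop:curves}, and the reduction to Theorem \ref{thm:Orlov} via Proposition \ref{prop:FMpropert} already indicated in the surrounding text. Nothing is missing.
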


\smallskip

For less trivial situations where Conjecture \ref{conj:splitt} can be verified, one has to refer to \cite{K3}. For this consider a full admissible subcategory $\eta\colon\cat{S}\mono\Db(X)$, for a smooth projective variety $X$. Thus we get the left and right adjoints $\eta^*\colon\Db(X)\to\cat{S}$ and $\eta^!\colon\Db(X)\to\cat{S}$.

Take now the functors $\fun{F}_1:=\eta\comp\eta^!\colon\Db(X)\to\Db(X)$ and $\fun{F}_2:=\eta\comp\eta^*\colon\Db(X)\to\Db(X)$. It is not difficult to see (using, for example, Theorem \ref{thm:splittprop} above) that $\fun{F}_1$ and $\fun{F}_2$ are splitting functors. A non-trivial argument allows one to prove the following:

\begin{thm}\label{thm:proj}{\bf(\cite{K3}, Theorem 7.1.)}
The functors $\fun{F}_1$ and $\fun{F}_2$ are of Fourier--Mukai type.
\end{thm}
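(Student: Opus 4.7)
The functors $\fun{F}_1 = \eta \comp \eta^!$ and $\fun{F}_2 = \eta \comp \eta^*$ are the right and left projections of $\Db(X)$ onto the admissible subcategory $\cat{S}$ along $\rort{\cat{S}}$ and $\lort{\cat{S}}$ respectively. The plan is to realize them at the level of kernels by projecting the diagonal $\ko_\Delta \in \Db(X \times X)$ onto an admissible subcategory of $\Db(X \times X)$ canonically associated to $\cat{S}$.

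First I would introduce $\cat{S}_X \subseteq \Db(X \times X)$ as the smallest thick subcategory containing all objects of the form $p_1^* \ka \otimes p_2^* \kb$ with $\ka \in \Db(X)$ and $\kb \in \cat{S}$. The projection formula gives
\[
\FM{p_1^* \ka \otimes p_2^* \kb}(\kc) \iso \R\Gamma(X, \ka \otimes \kc) \otimes_{\K} \kb,
\]
so every kernel in $\cat{S}_X$ induces a Fourier--Mukai functor taking values in $\cat{S}$. I would then show that $\cat{S}_X$ is admissible in $\Db(X \times X)$ by a Künneth-type base change for semi-orthogonal decompositions on products of smooth projective varieties: by Proposition \ref{prop:exadj} the required adjoints to the inclusion exist, and what remains is to verify that they preserve boundedness.

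Granting admissibility, consider the projection triangle $\ke \to \ko_\Delta \to \ke'$ with respect to $\Db(X \times X) = \ort{\rort{\cat{S}_X}, \cat{S}_X}$. For any $\ka \in \Db(X)$, applying the Fourier--Mukai transform produces a distinguished triangle $\FM{\ke}(\ka) \to \ka \to \FM{\ke'}(\ka)$. The first term lies in $\cat{S}$ by the computation above (applied on the generators of $\cat{S}_X$ and extended by thickness). For the third term, given $\kf \in \cat{S}$, the adjunction $p_2^* \dashv (p_2)_*$ together with the projection formula yields
\[
\Hom(\kf, \FM{\ke'}(\ka)) \iso \Hom(p_1^* \ka\dual \otimes p_2^* \kf, \ke') = 0,
\]
since $p_1^* \ka\dual \otimes p_2^* \kf$ lies in $\cat{S}_X$ while $\ke' \in \rort{\cat{S}_X}$; hence $\FM{\ke'}(\ka) \in \rort{\cat{S}}$. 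The uniqueness of the semi-orthogonal decomposition $\Db(X) = \ort{\rort{\cat{S}}, \cat{S}}$ then identifies this triangle with $\eta\eta^!(\ka) \to \ka \to q(\ka)$, functorially in $\ka$, giving $\FM{\ke} \iso \fun{F}_1$. The analogous argument with the mirror decomposition $\Db(X \times X) = \ort{\cat{S}_X, \lort{\cat{S}_X}}$ produces the kernel of $\fun{F}_2$.

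The step I expect to be the main obstacle is the admissibility of $\cat{S}_X$ in $\Db(X \times X)$ together with the boundedness of its projections, since external products of admissible subcategories are not formally admissible without geometric input; the cleanest route is probably a base-change argument carried out in the dg-enhanced setting of Section \ref{subsubsec:dg}, exploiting the uniqueness of enhancements (Theorem \ref{thm:uniqen}) and To\"en's Theorem \ref{thm:Toen}. A secondary subtlety, highlighted by Lemma \ref{lem:isoobj}, is that an object-wise agreement of functors must be upgraded to a natural isomorphism; here the functoriality of semi-orthogonal projection triangles supplies the naturality automatically, so the delicate work is concentrated entirely in setting up $\cat{S}_X$ correctly.
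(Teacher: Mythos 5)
The survey does not actually prove this theorem: it is quoted from Kuznetsov \cite{K3}, and your strategy---induce a semiorthogonal decomposition of $\Db(X\times X)$ from $\Db(X)=\ort{\rort{\cat{S}},\cat{S}}$ on one factor, decompose $\ko_\Delta$ accordingly, and identify the resulting triangle of Fourier--Mukai functors with the projection triangles---is essentially the strategy of Kuznetsov's actual proof. The orthogonality computations via the projection formula are correct, and you are right that the passage from objectwise agreement to a natural isomorphism is not an issue here: the morphism $\FM{\ke}(\ka)\to\ka$ coming from $\ke\to\ko_\Delta$ factors uniquely through the counit $\eta\eta^!(\ka)\to\ka$ because $\Hom(\cat{S},\rort{\cat{S}})=0$, and the comparison map is an isomorphism by the triangle axioms, naturally in $\ka$.

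The genuine gap is exactly where you place it, but it is larger than your write-up suggests: the right admissibility of $\cat{S}_X$ in $\Db(X\times X)$ is the entire technical content of \cite{K3}, not a verification to be dispatched in passing. In particular, Proposition \ref{prop:exadj} does not apply: it produces adjoints for exact functors between the bounded derived categories of two smooth proper schemes, not for the inclusion of an abstract thick subcategory of $\Db(X\times X)$. A workable route in the smooth projective case is through saturatedness: an admissible subcategory of the saturated category $\Db(X)$ is itself saturated (Theorem \ref{thm:BB} plus the argument of \cite{BK}); one must then prove that saturatedness survives the external product defining $\cat{S}_X$---this is where the real work lies, requiring either a generator argument or the dg machinery you allude to---and finally a saturated full triangulated subcategory of an $\Ext$-finite category is admissible by representability. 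Note also that once admissibility is established \emph{inside} $\Db(X\times X)$ there is no residual boundedness problem, since the projection $\ke$ of $\ko_\Delta$ is then by construction an object of $\Db(X\times X)$; the boundedness worry only arises if you first construct the adjoint on $\D(\Qcoh(X\times X))$ and try to descend.
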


\subsection{Relative Fourier--Mukai functors}\label{subsec:relative}

In \cite{K2}, Kuznetsov drove the attention to a slightly more general version of the classical Fourier--Mukai functors. For sake of simplicity, take a pair of smooth projective varieties $X_1$ and $X_2$ over the same smooth projective variety $S$. To fix the notation, this means that, for $i=1,2$, there is a morphism $f_i\colon X_i\to S$. Clearly, one may want to relax the assumptions on $X_i$ and $S$ but this is not in order here.

\begin{definition}\label{def:relfun}
	{\rm (i)} A functor $\fun{F}\colon\Db(X_1)\to\Db(X_2)$ is \emph{$S$-linear} if
	\[
	\fun{F}(\ka\otimes f_1^*(\kc))\iso\fun{F}(\ka)\otimes f_2^*(\kc),
	\]
	for all $\ka\in\Db(X_1)$ and for all $\kc\in\Db(S)$.

{\rm (ii)} 	A strictly full subcategory $\cat{S}\subseteq\Db(X_i)$ is \emph{$S$-linear} if for all $\kc\in\cat{S}$ and all $\ka\in\Db(S)$ we have $f_i^*(\ka)\otimes\kc\in\cat{S}$.
\end{definition}

These functors have reasonable properties listed in the following proposition and proved in \cite{K2} (see, in particular, Section 2.7 there).

\begin{prop}\label{prop:proprrel}
	{\rm (i)} If $\fun{F}$ is exact, $S$-linear and admits a right adjoint functor $\fun{F}^!$, then $\fun{F}^!$ is also $S$-linear.
	
	{\rm (ii)} If $\cat{S}\subseteq\Db(X_i)$ is a strictly full admissible $S$-linear subcategory, then its right and left orthogonals are $S$-linear as well.
\end{prop}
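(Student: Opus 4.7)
The plan is to derive both claims from the interplay between adjunction, Yoneda, and dualizability of objects on the smooth base $S$. Since $S$ is smooth projective, every object of $\Db(S)$ is perfect and therefore admits a strong dual $(\farg)\dual$, so for any $\kc\in\Db(S)$ the functor $(\farg)\otimes f_i^*\kc$ has both a left and a right adjoint, each given by $(\farg)\otimes f_i^*\kc\dual$. This single ingredient drives everything else; the $S$-linearity hypotheses are used only to transport tensor factors through $\fun{F}$ or across $\cat{S}$.

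For part (i), I would fix $\kc\in\Db(S)$ and $\kb\in\Db(X_2)$, and test $\fun{F}^!(\kb\otimes f_2^*\kc)$ against an arbitrary $\ka\in\Db(X_1)$ via the chain
\[
\Hom(\ka,\fun{F}^!(\kb\otimes f_2^*\kc))\iso\Hom(\fun{F}(\ka),\kb\otimes f_2^*\kc)\iso\Hom(\fun{F}(\ka)\otimes f_2^*\kc\dual,\kb).
\]
Applying the $S$-linearity of $\fun{F}$ rewrites $\fun{F}(\ka)\otimes f_2^*\kc\dual$ as $\fun{F}(\ka\otimes f_1^*\kc\dual)$, after which a second use of the adjunction $(\fun{F},\fun{F}^!)$ combined with dualizability on $X_1$ yields
\[
\Hom(\fun{F}(\ka\otimes f_1^*\kc\dual),\kb)\iso\Hom(\ka\otimes f_1^*\kc\dual,\fun{F}^!(\kb))\iso\Hom(\ka,\fun{F}^!(\kb)\otimes f_1^*\kc),
\]
and Yoneda then produces the required natural isomorphism $\fun{F}^!(\kb\otimes f_2^*\kc)\iso\fun{F}^!(\kb)\otimes f_1^*\kc$.

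For part (ii), fix $\ka\in\rort{\cat{S}}$ and $\kb\in\Db(S)$; the same duality rewriting gives
\[
\Hom(\kc,\ka\otimes f_i^*\kb)\iso\Hom(\kc\otimes f_i^*\kb\dual,\ka)
\]
for every $\kc\in\cat{S}$, and $S$-linearity of $\cat{S}$ places $\kc\otimes f_i^*\kb\dual$ again in $\cat{S}$, so the group vanishes and $\ka\otimes f_i^*\kb\in\rort{\cat{S}}$. The argument for $\lort{\cat{S}}$ is symmetric, testing from the other side via $\Hom(\ka\otimes f_i^*\kb,\kc)\iso\Hom(\ka,\kc\otimes f_i^*\kb\dual)$. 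I do not expect any serious obstacle in either part: the whole proof is a Yoneda chase powered by dualizability, and the only place requiring mild care is the verification that each isomorphism in the displayed chains is natural in the test object, which is automatic since every step is functorial. Note that admissibility of $\cat{S}$ is not actually needed for the $S$-linearity conclusion; it is a standing hypothesis because the statement lives in Kuznetsov's framework of admissible $S$-linear subcategories.
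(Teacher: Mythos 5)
The paper itself offers no proof of this proposition: it simply defers to \cite{K2}, Section 2.7, so there is nothing internal to compare against. Your argument is correct and is essentially the standard one found there: both parts reduce to the observation that, $S$ being smooth, every $\kc\in\Db(S)$ is perfect, hence $(\farg)\otimes f_i^*\kc$ and $(\farg)\otimes f_i^*\kc\dual$ are mutually adjoint on both sides, and the claims follow by the adjunction chains you write down (with admissibility indeed irrelevant for (ii), as you note). The one point deserving slightly more than the passing remark you give it is the Yoneda step in (i): Definition \ref{def:relfun} as stated only asks for objectwise isomorphisms $\fun{F}(\ka\otimes f_1^*\kc)\iso\fun{F}(\ka)\otimes f_2^*\kc$, and an objectwise identification of $\Hom$-groups does not by itself yield $\fun{F}^!(\kb\otimes f_2^*\kc)\iso\fun{F}^!(\kb)\otimes f_1^*\kc$; you need the $S$-linearity isomorphism to be natural in $\ka$, which is how Kuznetsov actually defines $S$-linearity and is the intended reading here.
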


As pointed out in, for example, \cite{K3,K2}, the relative functors play important roles in various geometric situations. Thus it makes perfect sense to wonder whether the machinery developed for Fourier--Mukai functors in the non-relative setting can be applied.

\medskip

It is clear that any full exact $S$-linear functor or rather any exact $S$-linear functor $\fun{F}\colon\Db(X_1)\to\Db(X_2)$ satisfying \eqref{eqn:hyp} is of Fourier--Mukai type in view of Theorem \ref{thm:CS1}. In particular, there is a unique (up to isomorphism) $\ke\in\Db(X_1\times X_2)$ and an isomorphism $\fun{F}\iso\FM{\ke}$.

On the other hand, we may consider the fibre product $X_1\times_S X_2$ and the closed embedding $i\colon X_1\times_S X_2\mono X_1\times X_2$.

\begin{lem}\label{lem:lemrel}{\bf (\cite{K2}, Lemma 2.32.)}
	If $\ke\in\Db(X_1\times_S X_2)$, then the Fourier--Mukai functor $\FM{i_*\ke}$ is $S$-linear.
\end{lem}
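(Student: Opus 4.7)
The plan is to chase $\FM{i_*\ke}(\ka\otimes f_1^*\kc)$ through the commutative diagram of the fibre product, using the projection formula for the closed embedding $i$ to move the computation from $X_1\times X_2$ down to $X_1\times_S X_2$, then exploiting the defining base-change identity $f_1\comp q_1 = f_2\comp q_2$ (with $q_i:=p_i\comp i$) to convert a pullback along $f_1$ into a pullback along $f_2$, and finally applying the projection formula once more to pull $f_2^*\kc$ out of the pushforward.

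First I would fix the notation: let $q_i\colon X_1\times_S X_2\to X_i$ denote the natural projections, so that $p_i\comp i=q_i$ and, by the very definition of the fibre product, $f_1\comp q_1 = f_2\comp q_2$ as morphisms $X_1\times_S X_2\to S$. Unfolding the definition of the Fourier--Mukai functor and applying the projection formula for $i$ gives the natural isomorphism
\[
\FM{i_*\ke}(\ka\otimes f_1^*\kc)
\iso (p_2)_* i_*\bigl(\ke\otimes q_1^*\ka\otimes q_1^*f_1^*\kc\bigr).
\]
Using $q_1^*f_1^*\iso q_2^*f_2^*$ to rewrite the last factor and then the projection formula for $q_2=p_2\comp i$, this becomes
\[
(q_2)_*\bigl(\ke\otimes q_1^*\ka\otimes q_2^*f_2^*\kc\bigr)
\iso (q_2)_*(\ke\otimes q_1^*\ka)\otimes f_2^*\kc,
\]
and the right hand side is canonically isomorphic to $\FM{i_*\ke}(\ka)\otimes f_2^*\kc$ by one more application of the projection formula for $i$ (reversed).

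There is essentially no serious obstacle here: the argument is a pure diagram chase once the projection formula is available in the derived sense for the closed embedding $i$ and the projection $q_2$. In the setting of the lemma, these ingredients are standard, so the main bookkeeping is to check that the displayed isomorphisms are natural in $\ka$ and $\kc$ (so that they assemble into an isomorphism of functors, not merely a pointwise isomorphism on objects), which follows from the naturality of the projection formula and of flat base change.
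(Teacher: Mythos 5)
Your argument is correct, and it is the standard one: the paper itself gives no proof of this lemma (it simply cites \cite{K2}, Lemma 2.32), and the projection-formula chase you describe --- push the computation onto $X_1\times_S X_2$ via the projection formula for $i$, use $f_1\comp q_1=f_2\comp q_2$ to trade $q_1^*f_1^*$ for $q_2^*f_2^*$, and pull $f_2^*\kc$ back out via the projection formula for $q_2$ --- is exactly how Kuznetsov proves it. One small remark: flat base change is not actually needed anywhere; the identification $q_1^*f_1^*\iso q_2^*f_2^*$ is just functoriality of pullback applied to the equality of morphisms $f_1\comp q_1=f_2\comp q_2$, so the only nontrivial derived-category input is the projection formula (and its naturality).
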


It is not difficult to observe that the Fourier--Mukai kernel of an $S$-linear Fourier--Mukai functor has to be set theoretically supported on the fibre product $X_1\times_S X_2$. The scheme theoretical point of view is more complicated to be dealt with and thus, following \cite{K2}, it makes sense to pose the following questions:

\begin{qn}\label{qn:rel}
	{\rm (i)} Given a full exact $S$-linear functor $\fun{F}\colon\Db(X_1)\to\Db(X_2)$, do there exist an $\ke\in\Db(X_1\times_S X_2)$ and an isomorphism of functors $\fun{F}\iso\FM{i_*\ke}$?
	
	{\rm (ii)} Is the choice of the Fourier--Mukai kernel $\ke\in\Db(X_1\times_S X_2)$ in (i) unique (up to isomorphism)?
\end{qn}

To our knowledge, no general answer to these problems is present in the literature.


\bigskip

{\small\noindent {\bf Acknowledgements.} The write-up of this paper started when P.S.\ was visiting the University of Bonn which we thank for the warm hospitality and for the financial support. The second author is also grateful to the organizers of the GCOE Conference ``Derived Categories 2011 Tokyo'', Y.\ Kawamata and Y.\ Toda, for the very stimulating mathematical atmosphere during the conference. We are also grateful to Pawel Sosna for comments on an early version of this paper. Pierre Schapira informed us about the paper \cite{SKK} and the notion of Fourier--Sato transform. David Ben--Zvi kindly brought our attention to the results in \cite{BZFN} and \cite{Pr}. We warmly thank both of them.}


\end{document}